\DeclareMathOperator\B{B}
\newcommand{\C}{\mathrm{C}}
\newcommand*{\coleq}{\mathrel{\vcenter{\baselineskip0.5ex\lineskiplimit0pt\hbox{\normalsize.}\hbox{\normalsize.}}}=}
\newcommand*{\eqcol}{=\mathrel{\vcenter{\baselineskip0.5ex\lineskiplimit0pt\hbox{\normalsize.}\hbox{\normalsize.}}}}
\renewcommand{\d}{\,\mathrm{d}}
\newcommand{\dx}{\,\mathrm{d}x}
\newcommand{\dxx}{\mathrm{d}x}
\newcommand{\dxxx}{\hspace{-0.2cm}\mathrm{d}x}
\renewcommand{\L}{\mathrm{L}}
\newcommand{\loc}{\mathrm{loc}}
\renewcommand{\P}{\mathrm{P}}
\newcommand{\R}{\mathds{R}}
\newcommand\sd{\,\triangle\,}
\newcommand{\W}{\mathrm{W}}
\newcommand{\F}{\mathcal{F}}
\newcommand{\D}{\mathrm{D}}
\newcommand{\cpt}{\mathrm{cpt}}
\def\dashint{\,\ThisStyle{\ensurestackMath{%
            \stackinset{c}{.2\LMpt}{c}{.5\LMpt}{\SavedStyle-}{\SavedStyle\phantom{\int}}}%
        \setbox0=\hbox{$\SavedStyle\int\,$}\kern-\wd0}\int}
\renewcommand{\c}{\mathrm{c}}
\newcommand{\A}{\mathcal{A}}
\newcommand{\G}{\mathcal{G}}
\theoremstyle{plain}
\newtheorem{theo}{Theorem}[section]
\newtheorem{defi}[theo]{Definition}
\newtheorem{prop}[theo]{Proposition}
\newtheorem{lemma}[theo]{Lemma}
\newtheorem{cor}[theo]{Corollary}
\newtheorem{definition}[theo]{Definition}
\theoremstyle{definition}
\newtheorem{setting}{Setting}
\theoremstyle{remark}
\newtheorem{rem}[theo]{Remark}
\newcommand{\Q}{\mathrm{Q}}
\newcommand{\lc}{\lambda}
\newcommand{\M}{\mathcal{M}}
\newcommand{\gc}{\Lambda}
\newcommand{\dgc}{{\Lambda_\D}}
\newcommand{\zob}{\Gamma}
\newcommand{\ahe}{\alpha}
\newcommand{\fhe}{\delta}
\renewcommand{\b}{\beta}
\newcommand{\q}{q}
\renewcommand{\sb}{{s_\beta}}
\newcommand{\sq}{{s_q}}
\newcommand{\sdb}{\tilde{\Lambda}}
\newcommand{\regreg}{\Omega_\mathrm{reg}}
\newcommand{\minhe}{{\min\{\ahe,\fhe\}}}
\numberwithin{equation}{section}
\title{Partial regularity for variational integrals with Morrey-Hölder zero-order terms, and the limit exponent in Massari's regularity theorem}
\author{Thomas Schmidt\footnote{Fachbereich Mathematik, Universität Hamburg, Bundesstr.\@ 55, 20146 Hamburg, Germany. Email addresses: \href{mailto:thomas.schmidt.math@uni-hamburg.de}{thomas.schmidt.math@uni-hamburg.de}, \href{mailto:jule.schuett@uni-hamburg.de}{jule.schuett@uni-hamburg.de}.}\qquad\qquad Jule Helena Schütt${}^\ast$}
\date{\today}
\begin{document}

\maketitle

\begin{abstract}
We revisit the partial $\C^{1,\alpha}$ regularity theory for minimizers of non-parametric integrals with emphasis on sharp dependence of the Hölder exponent $\alpha$ on structural assumptions for general zero-order terms. A particular case of our conclusions carries over to the parametric setting of Massari's regularity theorem for prescribed-mean-curvature hypersurfaces and there confirms optimal regularity up to the limit exponent.
\end{abstract}

\bigskip
\textbf{Mathematics Subject Classification:} 49N60, 35B65 (primary); 49Q05, 53A10, 35J47, 35J93 (secondary).

\setcounter{tocdepth}{1}
\tableofcontents

\section{Introduction}

In this paper, for $n,N\in \mathds{N}$ and bounded open $\Omega\subseteq \R^n$, we further develop the partial $\C^{1,\ahe}$ regularity theory for minimizers of variational integrals
\begin{equation}\label{DEF: integral functional}
  \F[w]\coleq  \int_\Omega\big[f(\D w)+g(\,\cdot\,,w)\big]\dx
\end{equation}
which depend on $\D w$ and $(\,\cdot\,,w)$ through separate integrands $f\colon\R^{N\times n}\to\R$ and $g\colon\Omega\times\R^N\to\R$, respectively. 

We focus on cases with standard assumptions on $f$, but very general hypotheses on $g$ and on the sharp dependence of the exponent $\ahe$ in the $\C^{1,\ahe}$ regularity on the latter hypotheses. More precisely, we impose on $g$ a Morrey-Hölder condition of type
\begin{equation}\label{EQCOND: g-term estimate}
  |g(x,y)-g(x,\widehat{y})|\le \zob(x) (1+|y|+|\widehat{y}|)^{\q-\b}|y-\widehat{y}|^\b
  \qquad\text{for all }x\in\Omega\text{ and }y,\widehat y\in\R^N
\end{equation}
with Hölder and growth exponents $0<\b\le\q<\infty$ for the $y$-variable and
with $x$-dependence controlled by a non-negative function $\Gamma$ whose precise
regularity will be measured on the scale of Morrey spaces. In particular, in the
general tradition of \cite{giaquinta1983differentiability} we admit
non-differentiability of $g(x,y)$ in $y$ and consequently non-availability of the Euler equation of $\mathcal{F}$.
Moreover, we directly foreshadow that the formulation of assumption \eqref{EQCOND: g-term estimate} is tailored out, to some
extent, for an application to the somewhat different setting of Massari's
$\C^{1,\alpha}$ regularity theorem \cite{massari1974esistenza,massari1975frontiere}
for sets with variational mean curvature in $\L^p$. Indeed, as we explain in more detail later on, in the Massari setting we here complement the results of
our predecessor paper \cite{SchmidtSchuettOptHoeldExp} and
achieve the final sharpening of the regularity conclusion.

However, for the moment we return to the setting of \eqref{DEF: integral functional} and aim at giving precise statements of our results, which concern interior regularity and can thus be conveniently formulated for local minimizers in the following sense:

\begin{defi}[local minimizers]\label{DEF: local-minimizer}
  Assume that $f$ is a Borel function with
  $\limsup_{|z|\to\infty}\frac{|f(z)|}{|z|^2}<\infty$ and that $g$ is a
  Carath\'eodory integrand. Then we call $u\in\W^{1,2}_\loc(\Omega,\R^N)$ with
  $g(\,\cdot\,,u)\in\L^1_\loc(\Omega)$ a local minimizer of $\mathcal{F}$ from
  \eqref{DEF: integral functional} if, for every $x\in\Omega$, there
  exists some $r>0$ with $\B_r(x)\Subset\Omega$ such that
  \[
    \int_{\B_r(x)}\big[f(\D u)+g(\,\cdot\,,u)\big]\dx
    \le\int_{\B_r(x)}\big[f(\D w)+g(\,\cdot\,,w)\big]\dx
  \]
  holds for all $w\in\W^{1,2}_u(\B_r(x),\R^N)$ with $g(\,\cdot\,,w)\in\L^1(\B_r(x))$.
\end{defi}

We remark that \eqref{EQCOND: g-term estimate} together with below-mentioned assumptions on $\beta$, $q$, $\Gamma$ ensures that
$g(\,\cdot\,,w)\in\L^1_\loc(\Omega)$ holds either for all
$w\in\W^{1,2}_\loc(\Omega,\R^N)$ or none at all. This said, it becomes apparent that the integrability requirements on the zero-order term in Definition \ref{DEF: local-minimizer} only serve to exclude trivial cases which do not allow for a meaningful notion of local minimality, and indeed when additionally imposing a very mild assumption such as $g(\,\cdot\,,0)\in\L^1_\loc(\Omega)$, for instance, we could also drop these requirements completely.

Our main result now reads as follows:

\begin{theo}[partial regularity for variational integrals with Morrey-Hölder zero-order integrand]\label{THEO: main-intro}
    We consider a $2$-strictly quasiconvex \textup{(}in the sense of the later Definition \ref{DEF: quasiconvex}\/\textup{)} $\C^2$ integrand $f\colon\R^{N\times n}\to\R$ which has at most quadratic growth in the sense of\/ $\limsup_{|z|\to\infty}\frac{|f(z)|}{|z|^2}<\infty$. Moreover, in case $n\ge3$, we abbreviate $2^\ast\coleq\frac{2n}{n{-}2}$ and assume that a Carath\'eodory integrand $g\colon\Omega\times\R^N\to\R$ satisfies \eqref{EQCOND: g-term estimate} with
    \begin{equation}\label{EQCOND: beta and q}
        \b\in{(0,1]}\,,\qquad\qquad\q\in{[\b,2^\ast)}\,,
    \end{equation}
    and with a non-negative function $\Gamma$ which in turn satisfies, for the exponents
    \[
      \sb\coleq\bigg(\frac{2^\ast}\b\bigg)'=\frac{2^\ast}{2^\ast{-}\b}
      \qquad\qquad\text{and}\qquad\qquad
      \sq\coleq\bigg(\frac{2^\ast}\q\bigg)'=\frac{2^\ast}{2^\ast{-}\q}\,,    
    \]
    the two Morrey conditions \textup{(}see Section \ref{SEC: Preliminaries} for definitions of the spaces\textup{)}
    \begin{equation}\label{EQ: Gamma-s-gamma}
      \Gamma\in\L^{\sb,n+\sb((2-\b)\ahe-\b)}_\loc(\Omega)
      \qquad\text{for some }\ahe \in (0,1)
    \end{equation}
    and
    \begin{equation}\label{EQ: Gamma-delta}
    \begin{cases}
        \zob\in\L^\sq_\loc(\Omega)\,,        &\text{if }\q>2\,,\\
        \zob\in\L^{\sq,t}_\loc(\Omega)\quad \text{with}\quad t>n-\sq\q\,,   &\text{if }\q\le2\,.
    \end{cases}
    \end{equation}    
    
    Then, for every local minimizer $u$ of\/ $\mathcal{F}$ from \eqref{DEF: integral functional}, there exists an open set $\regreg\subseteq \Omega$ such that
    \[
      |\Omega\setminus\regreg|=0
      \qquad\qquad\text{and}\qquad\qquad
      u\in\C^{1,\ahe}_\loc(\regreg,\R^N)\,.
    \]
    In case $n\in\{1,2\}$ the result remains valid with the range for $q$ in \eqref{EQCOND: beta and q} replaced by $\q\in{[\b,\infty)}$, with $\sb$ in \eqref{EQ: Gamma-s-gamma} replaced by any $s>1$, and with \eqref{EQ: Gamma-delta} replaced by the requirement $\Gamma\in\L^{\eta,\max\{n-\q,0\}}_\loc(\Omega)$ for some $\eta>1$.
\end{theo}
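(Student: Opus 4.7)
My plan is to adapt the $\A$-harmonic approximation method for strictly quasiconvex minimizers—well established for purely $f$-dependent integrals (Evans, Acerbi-Fusco, Duzaar-Grotowski-Kronz)—to the present setting with a Morrey-Hölder zero-order term. The regular set $\regreg$ will be defined as the open subset of points $x_0\in\Omega$ where $(\D u)_{x_0,r}$ stays locally bounded and the excess
\[
  E(x_0,r)\coleq\dashint_{\B_r(x_0)}|\D u-(\D u)_{x_0,r}|^2\dx
\]
is eventually small; by standard Lebesgue-point arguments this set is of full measure. The bulk estimates driven by $f$ are by now routine, so the new ingredient I must supply is a sharp control of the perturbation contributed by $g$, capped by the target rate $r^{n+2\ahe}$ in the excess-decay iteration.

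First I would prove a Caccioppoli inequality of the second kind: for every affine $\ell\colon\R^n\to\R^N$ with $\D\ell\eqcol P$,
\[
  \dashint_{\B_{r/2}(x_0)}|\D u-P|^2\dx
    \le C\dashint_{\B_r(x_0)}\bigg|\frac{u-\ell}{r}\bigg|^2\dx
      +\text{(zero-order contribution)},
\]
obtained by inserting the competitor $u-\xi(u-\ell)$ with a standard cutoff $\xi$ into the minimality and exploiting the strict quasiconvexity of $f$ against the linear perturbation. Via \eqref{EQCOND: g-term estimate}, the $g$-contribution reduces to $\int_{\B_r(x_0)}\Gamma(1+|u|+|\ell|)^{\q-\b}|u-\ell|^\b\dx$, which I would split by Hölder into a $\|\Gamma\|_{\L^{\sb}(\B_r)}\|u-\ell\|_{\L^{2^\ast}(\B_r)}^\b$-factor (controlled by \eqref{EQ: Gamma-s-gamma}) and a residual growth factor $(1+|u|+|\ell|)^{\q-\b}$ (handled via \eqref{EQ: Gamma-delta} and a local $\L^{2^\ast}$-bound for $u$). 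Combining the Morrey assumption on $\Gamma$ with the Sobolev-Poincaré inequality $\|u-\ell\|_{\L^{2^\ast}(\B_r)}\lesssim\|\D(u-\ell)\|_{\L^2(\B_r)}$, the identity $n/\sb+n\b/2=n+\b$—which is precisely what the definition of $\sb$ engineers—yields
\[
  \int_{\B_r(x_0)}\Gamma(1+|u|+|\ell|)^{\q-\b}|u-\ell|^\b\dx
    \le C\,r^{n+(2-\b)\ahe}\bigg(\dashint_{\B_r(x_0)}|\D u-P|^2\dx\bigg)^{\b/2}.
\]
This is the sharp rate: once the excess is of order $r^{2\ahe}$, the right-hand side is $\mathcal{O}(r^{n+2\ahe})$, which is exactly what a linear excess-decay iteration can absorb.

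Next, with $\A\coleq\D^2 f(P)$, which satisfies the Legendre-Hadamard condition by quasiconvexity, the $\A$-harmonic approximation lemma provides, on balls of sufficiently small excess, an $\A$-harmonic map $h$ close to $u$ in $\L^2$; standard $\C^{1,1}$ interior estimates for $h$ deliver the usual linear excess-decay contribution. Combining this with the Caccioppoli inequality, the $g$-bound above, and a Taylor expansion of $f$ around $P$, I arrive at
\[
  E(x_0,\theta r)\le C_1\theta^2 E(x_0,r)+C_2 r^{2\ahe}
\]
for an arbitrarily fixed small $\theta\in(0,1)$, uniformly on the regular set. Since $\ahe<1$, the standard iteration converts this into $E(x_0,r)\le C r^{2\ahe}$ locally on $\regreg$, and Campanato's characterisation of Hölder continuity yields $\D u\in\C^{0,\ahe}_\loc(\regreg,\R^{N\times n})$. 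The low-dimensional case $n\in\{1,2\}$ follows from the same scheme with $2^\ast$ replaced by an arbitrarily high Sobolev exponent and with the simplified Morrey assumption on $\Gamma$.

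The principal obstacle is the Morrey bookkeeping in the second step: the exponent $n+\sb((2-\b)\ahe-\b)$ in \eqref{EQ: Gamma-s-gamma} is tuned so tightly to the target rate $r^{n+2\ahe}$ that any crude use of Hölder, Poincaré, or Morrey embedding would lose a factor and degrade $\ahe$. Getting the precise interplay between $2^\ast$, $\b$, $\q$, and the Morrey exponent right is where the refinement over classical partial regularity statements lies, and it is what ultimately enables the $\C^{1,\ahe}$ conclusion with the sharp dependence of $\ahe$ on the Morrey-Hölder data of $g$.
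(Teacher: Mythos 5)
Your overall framework — Caccioppoli, approximate $\A$-harmonicity (necessary because $g$ is not differentiable in $y$, so no Euler-Lagrange equation is available), $\A$-harmonic approximation, excess decay, Campanato — is exactly the paper's. But there is a genuine gap in your treatment of the zero-order term, and it conceals the most delicate part of the argument: the two-pass bootstrap.

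You claim that a single pass yields
\[
\int_{\B_r(x_0)}\Gamma(1+|u|+|\ell|)^{\q-\b}|u-\ell|^\b\dx
  \le C\,r^{n+(2-\b)\ahe}\Big(\dashint_{\B_r(x_0)}|\D u-P|^2\dx\Big)^{\b/2},
\]
so that once the excess is $\mathcal{O}(r^{2\ahe})$ the $g$-contribution is $\mathcal{O}(r^{n+2\ahe})$. This cannot hold as stated when $\q>\b$: the H\"older split you describe with exponents $\tfrac1{\sb}+\tfrac\b{2^\ast}=1$ already exhausts the measure, leaving no room for the factor $(1+|u|+|\ell|)^{\q-\b}$, which is \emph{not} bounded on the ball since $u$ is not a priori in $\L^\infty$. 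One must instead split $(1+|v|)^{\q-\b}|v|^\b\le c(|v|^\b+|v|^\q)$ (where $v=u-\ell$, absorbing $|\zeta|+|\xi|\le M$) and treat the two pieces with the two Morrey hypotheses separately. The $|v|^\b\Gamma$-piece uses \eqref{EQ: Gamma-s-gamma} and produces rate $r^{n+2\ahe}$, but the $|v|^\q\Gamma$-piece is estimated via Young with exponents $\tfrac{2^\ast}{\q},\sq$ and the Morrey condition \eqref{EQ: Gamma-delta}, and produces only the rate $r^{n+2\fhe}$ for the exponent $\fhe$ encoded in \eqref{EQ: Gamma-delta}. The first pass of the iteration therefore delivers only $\C^{1,\min\{\ahe,\fhe\}}_\loc(\regreg)$. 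Since \eqref{EQ: Gamma-delta} is incomparable with \eqref{EQ: Gamma-s-gamma}, one genuinely may have $\fhe<\ahe$, and your claimed one-pass decay $E(x_0,\theta r)\le C_1\theta^2 E(x_0,r)+C_2 r^{2\ahe}$ is simply false in general.

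The missing idea is the bootstrap used in the paper: once $\C^{1,\min\{\ahe,\fhe\}}$ regularity is established on the open regular set $\regreg$, one observes in particular $u\in\L^\infty_\loc(\regreg,\R^N)$. Restricting to subdomains of $\regreg$, the factor $(1+|u|+|\ell|)^{\q-\b}$ is now genuinely bounded, the $|v|^\q\Gamma$-term and hence \eqref{EQ: Gamma-delta} become irrelevant, and the Caccioppoli and approximate-harmonicity estimates can be re-run with only \eqref{EQ: Gamma-s-gamma}, producing the sharp rate $r^{n+2\ahe}$ and thus $\C^{1,\ahe}_\loc(\regreg,\R^N)$. Without this second pass the optimal exponent is not reached. (A smaller point: in the comparison step you also need a parameter balancing; the paper compares $f(\D u)$ with $f(\D u-\sigma\D\varphi)$ via minimality and then chooses $\sigma=\rho^\ahe$ precisely so that the $\sigma^{\b-1}\rho^{(2-\b)\ahe}$ term from the $g$-side balances the $\sigma$ term from the $f$-side; this choice is essential for the sharp exponent and is glossed over in your sketch.)
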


In Theorem \ref{THEO: main-intro}, our main point is in determining the
gradient Hölder exponent $\ahe$ of the minimizers and more precisely its sharp dependence on the parameters $\b$, $\q$, and $\Gamma$ from the Morrey-Hölder condition \eqref{EQCOND: g-term estimate} for the zero-order integrand $g$.
Somewhat surprisingly, this dependence has rarely been addressed, though even with a focus on partial $\C^{1,\ahe}$ regularity in general vectorial frameworks we are aware of an abundance of positive results in \cite{MORREY68,GIUMIR68,GIAMOD79,CAMPANATO83,ANZELLOTTI83,FUSHUT85,EVANS86,GIAMOD86,ACEFUS87,CAMPANATO87,EVAGAR87,GARIEPY89,ANZGIA88,ACEFUS89,FUSHUT91,HAMBURGER96,PASSIE97,HAMBURGER98,CARFUSMIN98,DUZGRO00,DUZGASGRO00,CARMIN01,BILFUC01a,GUIDORZI02,DUZGAS02,HAMBURGER03,DUZGASMIN04,DUZGROKRO04,DUZGROKRO05,KRIMIN05,KRIMIN05a,MINGIONE06,KRIMIN07,CHOYAN07,DUZMIN08,SCHMIDT08,SCHMIDT09A,CARLEOPASVER09,DUZMIN11,DUZMIN10A,DUZMIN10B,DIELENSTRVER12,BECK13,HOPPER16,KUSMIN16,GMEKRI19,GMEKRI23} and of counterexamples in \cite{DEGIORGI68,GIUMIR68EX,NECAS77,SVEYAN00,SVEYAN02,MUESVE03,SZEKELYHIDI04,MONSA16,MOONEY20}. However, at least the works \cite{GIAGIU84,HAMBURGER07b,SCHMIDT09B} consider conditions of types similar to \eqref{EQCOND: g-term estimate} with $\q=\beta$.
Here, \cite{GIAGIU84} establishes in fact full regularity for scalar and specific vectorial cases. In particular, for $\Gamma\in\L^\infty_\loc(\Omega)$, this work reaches $\C^{1,\ahe}$ regularity with the optimal exponent $\ahe=\frac\b{2-\b}$, which is reproduced by our result; cf.\@ situation \ref{ITEM: unif-Cbeta} below. 
Moreover, a remark in \cite{GIAGIU84} predicts --- still for $\q=\b$, the specific vectorial case mentioned, and in the context of full regularity --- even the general Hölder exponent of Theorem \ref{THEO: main-intro} (though in quite different terminology) and the one of the subsequent situation \ref{ITEM: Lp-Cbeta} with $\Gamma\in\L^p_\loc(\Omega)$. In contrast, the works \cite{HAMBURGER07b,SCHMIDT09B} deal with partial regularity in frameworks closer to ours and reach once more the exponent $\ahe=\frac\b{2-\b}$ in case $\q=\b$, $\Gamma\in\L^\infty_\loc(\Omega)$, but not the finer ones for $\Gamma$ in Morrey and $\L^p$ spaces. Concerning the exponent $\frac\b{2-\b}$, it should also be mentioned that it has in fact been discovered even before and occurs also in \cite{CRARABTAR77,PHILLIPS83A,PHILLIPS83B,ALTPHI86,HAMLUTMEAWHI07}, partially in different form and for more specific model cases. Finally, a Morrey-Hölder condition similar to \eqref{EQCOND: g-term estimate} has been treated also in \cite{SCHMIDT14}, where specifically \cite[Corollary 3]{SCHMIDT14} asserts partial $\C^{1,\ahe}$ regularity of a minimizer $u$ under a condition of type \eqref{EQCOND: g-term estimate}, possibly with $\b<\q$, plus an a priori Morrey hypothesis on $u$ itself. However, as these results are derived from a general theory of almost-minimizers, the accessible exponents $\ahe$ stay beyond $\frac\b2$ and already in case $\Gamma,|u|\in\L^\infty_\loc(\Omega)$ cannot reach the previously mentioned optimal exponent $\frac\b{2-\b}$.

At this stage, with the sharp dependence of $\ahe$ still being our main concern, we turn to several technical observations on the governing conditions \eqref{EQCOND: beta and q}, \eqref{EQ: Gamma-s-gamma}, \eqref{EQ: Gamma-delta} for the parameters and in particular on the Morrey conditions \eqref{EQ: Gamma-s-gamma} and \eqref{EQ: Gamma-delta} for $\Gamma$.

First, we put on record that only \eqref{EQ: Gamma-s-gamma} but not \eqref{EQ: Gamma-delta} influences the Hölder exponent $\ahe$. Therefore, we have conveniently formulated \eqref{EQ: Gamma-s-gamma} with the exponent
$n+\sb((2{-}\b)\ahe-\b)$ such that $\ahe\in{(0,1)}$ eventually turns out to be exactly the exponent of the resulting $\C^{1,\ahe}$ regularity. In addition, we point out that the case of interest is only $\ahe\le\frac\b{2-\b}$, since in case of larger $\ahe$ we have
$\L^{\sb,n+\sb((2{-}\b)\ahe-\b)}_\loc(\Omega)=\{0\}$, and this enforces via \eqref{EQ: Gamma-s-gamma}
and \eqref{EQCOND: g-term estimate} that $g(x,y)$ is independent of $y$ and that
the zero-order term of $\mathcal{F}$ trivializes. Turning to the complementary
condition \eqref{EQ: Gamma-delta}, we remark that this hypothesis --- though not relevant for determining $\ahe$ --- seems technically inevitable in making the first steps towards ($\C^1$) regularity in the above generality, while it can be dropped in slightly restricted frameworks to be discussed below. Let us also observe that the case distinction within \eqref{EQ: Gamma-delta} seems formally reasonable, since $q\le2$ is equivalent with $n-\sq\q\ge0$ and thus the distinction prevents us from considering negative $t$.

We remark that, similar to the modification mentioned for $n\in\{1,2\}$, also in dimension $n\ge3$ we could require \eqref{EQ: Gamma-s-gamma} in the form $\Gamma\in\L^{s,n+s((2{-}\b)\ahe-\b)}_\loc(\Omega)$ for any $s\ge\sb$. However, in view of the standard Morrey space embedding
$\L^{s,n+s((2{-}\b)\ahe-\b)}_\loc(\Omega)
\subset\L^{\sb,n+\sb((2{-}\b)\ahe-\b)}_\loc(\Omega)$ for all $s\ge\sb$ (cf.\@ Section \ref{SEC: Preliminaries}), this does not win any generality, and thus we have indeed given the above statement for $s=\sb$.

Further, we record that if one reads Theorem \ref{THEO: main-intro} for merely obtaining
$\C^{1,\ahe}$ regularity with \emph{some} $\ahe>0$, then \eqref{EQ: Gamma-s-gamma} reduces to
$\Gamma\in\L^{\sb,t}_\loc(\Omega)$ for some $t>n-\sb\b$. However, even this reduced version of
\eqref{EQ: Gamma-s-gamma} remains incomparable with \eqref{EQ: Gamma-delta} (except in the basic case $\q=\b$),
essentially since $\L^{\sq,n-\sq\q}_\loc(\Omega)$ embeds only into $\L^{\sb,n-\sb\q}_\loc(\Omega)$ but not into $\L^{\sb,n-\sb\b}_\loc(\Omega)$.

Let us also briefly point out that the conditions recorded in Theorem \ref{THEO: main-intro} for the cases $n\in\{1,2\}$ can be formally derived\footnote{Indeed, the formal replacement transforms \eqref{EQ: Gamma-delta} into either $\q>2$, $\Gamma\in\L^\eta_\loc(\Omega)$ for some $\eta>1$ or $\q\le 2$, $\Gamma\in\L^{\eta,t}_\loc(\Omega)$ for some $\eta>1$ and some positive $t>n-\eta q$. 
This is reasonable for $n=2$ and with modified case distinction between $q>1$ and $q\le1$ also for $n=1$. 
Finally, it reduces to the adapted requirement of the theorem, since in case $q\le n$ having $\Gamma\in\L^{\eta,t}_\loc(\Omega)$ for some $\eta>1$ and some positive $t>n-\eta q$ turns out to be equivalent with having $\Gamma\in\L^{\eta,n-q}_\loc(\Omega)$ for some $\eta>1$.} by replacing $2^\ast$ with an arbitrarily large finite exponent and accordingly the occurrences of $\sb$ and $\sq$ in \eqref{EQ: Gamma-s-gamma} and \eqref{EQ: Gamma-delta} with some $s>1$ and $\eta>1$ arbitrarily close to $1$. In this light, we consider the case $n\in\{1,2\}$ of Theorem \ref{THEO: main-intro} as a usual and plausible complement to the main case $n\ge3$.

Finally, we mention that the proof of Theorem \ref{THEO: main-intro} is based on the $A$-harmonic approximation method introduced in \cite{DUZSTE02} and further developed in \cite{DUZGRO00,DUZGASGRO00,DUZGAS02,DUZGROKRO04,DUZGROKRO05,DUZMIN08,SCHMIDT08,SCHMIDT09A,DIESTRVER12,DIELENSTRVER12,CRUDIE19,GMEKRI19,GMEINEDER21}, for instance.
As usual in partial regularity theory, this method also provides an explicit characterization of the regular set $\regreg$, which will be stated in the later Section \ref{SEC: regularity proofs} and may prove very handy. In particular, whenever everywhere $\C^1$ regularity or partial $\C^1$ regularity with better information on the size of $\Omega\setminus\regreg$ than just $|\Omega\setminus\regreg|=0$ is already available, then Theorem \ref{THEO: main-intro} can boost also these types of regularity to the level of $\C^{1,\ahe}$ with optimal $\ahe$. The same is true for the subsequent Theorems \ref{THEO: a-priori-bounded}, \ref{THEO: a-priori-Lipschitz}, and indeed our intended application to the situation of Massari's regularity theorem is one instance which exploits this observation.

\medskip

We find it reasonable to complement Theorem \ref{THEO: main-intro} with a variant for minimizers which are known to be in $\L^\infty_\loc(\Omega)$, since such local boundedness may be obtained a priori from maximum principles or $\C^0$ regularity methods at least in scalar cases or cases with specific structure. Anyway, under such an assumption we can fully drop the complementary Morrey condition \eqref{EQ: Gamma-delta} and impose on $\Gamma$ only the sole assumption \eqref{EQ: Gamma-s-gamma}:

\begin{theo}[partial regularity for a priori $\L^\infty_\loc$ minimizers]\label{THEO: a-priori-bounded}
  We again consider a $2$-strictly quasiconvex $\C^2$ integrand $f\colon\R^{N\times n}\to\R$ which has at most quadratic growth. Moreover, we impose on $g$ the same assumptions as in Theorem \ref{THEO: main-intro} with $q\in{[\beta,2^\ast)}$ replaced by\footnote{In fact, we could also replace the term $(1+|y|+|\widehat y|)^{\q-\b}$ in \eqref{EQCOND: g-term estimate} with an arbitrary locally bounded function of $y$ and $\widehat y$, but for simplicity we stick to the above form of \eqref{EQCOND: g-term estimate}.} $q\in{[\beta,\infty)}$ and with \eqref{EQ: Gamma-delta} dropped. Then, if a local
  minimizer $u$ of\/ $\F$ satisfies $u\in\L^\infty_\loc(\Omega,\R^N)$, the
  conclusion of Theorem \ref{THEO: main-intro} remains valid.
\end{theo}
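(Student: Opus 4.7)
The plan is to reduce Theorem~\ref{THEO: a-priori-bounded} to the already-stated Theorem~\ref{THEO: main-intro} by exploiting the \emph{a priori} local boundedness of $u$ to truncate the dependence of $g$ on the $y$-variable, effectively converting the general case $\q\ge\b$ into the case $\tilde\q=\b$. The key observation is that, once $|u|\le M$ locally, the factor $(1+|y|+|\widehat y|)^{\q-\b}$ appearing in \eqref{EQCOND: g-term estimate} becomes an innocuous constant whenever $y,\widehat y$ range over the values taken by $u$ or by the bounded comparison maps underlying the proof of Theorem~\ref{THEO: main-intro}.

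More concretely, I would first localize by fixing $x_0\in\Omega$ and $r>0$ with $\B_r(x_0)\Subset\Omega$ and $\|u\|_{\L^\infty(\B_r(x_0),\R^N)}+1\le M<\infty$, and then set $\tilde g(x,y)\coleq g(x,\pi_M(y))$, where $\pi_M\colon\R^N\to\overline{\B_M(0)}$ denotes the nearest-point projection (or any $1$-Lipschitz retraction equal to the identity on the essential range of $u$ over $\B_r(x_0)$). A direct check shows that $\tilde g$ is Carathéodory, agrees with $g$ on $\B_r(x_0)\times\overline{\B_M(0)}$, and satisfies \eqref{EQCOND: g-term estimate} with new exponents $\tilde\b=\tilde\q=\b$ and coefficient $\tilde\zob\coleq(1+2M)^{\q-\b}\zob$. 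For this modified integrand, condition \eqref{EQ: Gamma-s-gamma} is unchanged, and \eqref{EQ: Gamma-delta} reduces, since $\tilde\q=\b\le1\le2$, to the requirement $\tilde\zob\in\L^{\sb,t}_\loc(\Omega)$ for some $t>n-\sb\b$, which is automatic from \eqref{EQ: Gamma-s-gamma} because $n+\sb((2{-}\b)\ahe-\b)>n-\sb\b$ for every $\ahe>0$. The analogous reduction handles the $n\in\{1,2\}$ case after the adaptations recorded in Theorem~\ref{THEO: main-intro}.

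It then remains to apply (the proof of) Theorem~\ref{THEO: main-intro} to $u$ and the modified functional $\tilde\F[w]\coleq\int_\Omega[f(\D w)+\tilde g(\,\cdot\,,w)]\dx$, and this minimality transfer is the step I expect to be the main obstacle. It is not immediate that $u$ is a local minimizer of $\tilde\F$ in the strict sense of Definition~\ref{DEF: local-minimizer}: for an arbitrary competitor $w\in\W^{1,2}_u(\B_\rho(x_1),\R^N)$ with $\tilde g(\,\cdot\,,w)\in\L^1$, the truncated map $\pi_M\circ w$ is still an admissible $\F$-competitor (since $\pi_M\circ u=u$ and $\tilde g(\,\cdot\,,\pi_M\circ w)=\tilde g(\,\cdot\,,w)$), but without further monotonicity one cannot compare $\int f(\D w)$ with $\int f(\D(\pi_M\circ w))$ from quasiconvexity alone. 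The remedy is to observe that the partial regularity proof underlying Theorem~\ref{THEO: main-intro} invokes minimality of $u$ only against specific, uniformly bounded comparison maps --- namely the Caccioppoli-type competitor $u-\eta^2(u-u_{x_0,r})$ (bounded by $\|u\|_{\L^\infty}$) and the $A$-harmonic approximation competitor obtained from gluing $u$ to its $A$-harmonic approximation, which by a maximum-principle-type bound for linear constant-coefficient elliptic systems can be kept inside $\overline{\B_M(0)}$. For such bounded competitors, $\F$-minimality and $\tilde\F$-minimality coincide, so every estimate in the proof of Theorem~\ref{THEO: main-intro} transfers verbatim to the $\tilde\F$-setting and yields the claimed regular set $\regreg\subseteq\Omega$ together with $u\in\C^{1,\ahe}_\loc(\regreg,\R^N)$.
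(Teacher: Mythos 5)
Your proposal is close in spirit to the paper's argument --- the shared key observation is that the $\L^\infty$ bound renders $(1+|u|+|\varphi|)^{\q-\b}$ an innocuous constant in all the minimality tests --- but your packaging via a truncated integrand $\tilde g=g(\,\cdot\,,\pi_M(\,\cdot\,))$ creates the minimality-transfer problem you then have to repair, whereas the paper avoids it entirely. Concretely, the paper (Section \ref{subsec: sharpening}) re-derives the Caccioppoli inequality (Lemma \ref{LEM: IMPROV: Caccioppoli inequality}) and the approximate $A$-harmonicity estimate (Lemma \ref{LEM: IMPROV: Approximate A-harmonicity}) directly for Setting \ref{Setting: release second Morrey}: in each case the estimate for the zero-order difference simplifies to a bound of the form $\c\int\zob|v|^\b$, with $\c$ depending on $\|u\|_{\L^\infty}$, because the competitors $u-\eta v=(1-\eta)u+\eta(\zeta+\xi x)$ and $u\pm\sigma\varphi$ stay in a fixed ball. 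This drops the $\q$-term and hence the Morrey hypothesis \eqref{EQ: Gamma-delta} outright, and the remaining argument of Sections \ref{sec:excess}--\ref{subsec: basic regularity} runs unchanged with $\minhe$ replaced by $\ahe$. Your truncation reformulation does nothing more than this while introducing the extra bookkeeping of choosing the truncation radius strictly larger than $\|u\|_{\L^\infty}$ to accommodate the perturbed competitors, so I would regard it as a repackaging rather than a genuinely different argument.

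One detail in your repair step is incorrect and deserves flagging, even though it happens to be harmless in this paper's architecture. You invoke minimality against a competitor ``obtained from gluing $u$ to its $A$-harmonic approximation'' and assert this glued map can be kept in $\overline{\B_M(0)}$ by ``a maximum-principle-type bound for linear constant-coefficient elliptic systems''. Constant-coefficient elliptic systems satisfying only the Legendre--Hadamard condition \eqref{EQCOND: lower bound Harmonic} do not, in general, obey a maximum principle, so that bound is not available. Fortunately it is also not needed: the $A$-harmonic approximation method as implemented here (Lemma \ref{LEM: Harmonic Approx}, used inside Lemma \ref{LEM: before excess estmate}) is applied as a black box comparison lemma and never tests minimality against a glued competitor. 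The only minimality tests in the whole chain are the Caccioppoli test $w=u-\varphi$ in Lemma \ref{LEM: Caccioppoli inequality} and the perturbation test $w=u\pm\sigma\varphi$ in Lemma \ref{LEM: approximate harmonicity}, and both of those are indeed uniformly bounded as you say. So your proposal reaches the correct conclusion, but by correcting a worry that does not arise, via a tool that would not work if it did.
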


Another standard observation in $\C^1$ regularity theory is that in the presence of even a priori \emph{gradient} bounds one merely needs locally uniform convexity/ellipticity and can in fact dispense with any global uniformity. This results in the following further variant of our results, suitable for the announced application to Massari's regularity theorem:

\begin{theo}[partial regularity for a priori $\W^{1,\infty}_\loc$ minimizers in non-uniformly elliptic cases]\label{THEO: a-priori-Lipschitz}
  We consider a $\C^2$ integrand $f\colon\R^{N\times n}\to\R$ which is strictly convex in the sense of\/ $\D^2f(z)\xi\cdot\xi>0$ for all $z,\xi\in\R^{N\times n}$ and has at most quadratic growth. Moreover, we impose on $g$ the same assumptions as in Theorem \ref{THEO: main-intro} with $q\in{[\beta,2^\ast)}$ replaced by $q\in{[\beta,\infty)}$ and with \eqref{EQ: Gamma-delta} dropped. Then, if a local minimizer $u$ of\/ $\F$ satisfies $u\in\W^{1,\infty}_\loc(\Omega,\R^N)$, the conclusion of Theorem \ref{THEO: main-intro} still holds.
\end{theo}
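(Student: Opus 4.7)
The plan is to reduce to Theorem \ref{THEO: a-priori-bounded} by a local modification of $f$ outside the region where the a priori gradient bound is effective. Fix $x_0\in\Omega$ and pick a ball $\B_r(x_0)\Subset\Omega$ on which $\|\D u\|_{\L^\infty}\le M$ for some finite $M$; such a radius exists by the Lipschitz hypothesis. Since $f\in\C^2$ is strictly convex, continuity of $\D^2 f$ on the compact set $\{|z|\le M{+}1\}$ yields a uniform ellipticity constant $\nu>0$ with $\D^2 f(z)\xi\cdot\xi\ge\nu|\xi|^2$ there.

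Next I would construct a modified integrand $\tilde f\in\C^2(\R^{N\times n})$ with the four properties: (i) $\tilde f=f$ on $\{|z|\le M\}$; (ii) $\tilde f\ge f$ everywhere; (iii) $\tilde f$ has at most quadratic growth; and (iv) $\tilde f$ is globally uniformly convex, $\D^2\tilde f(z)\xi\cdot\xi\ge\tilde\nu|\xi|^2$, which in particular implies $2$-strict quasiconvexity. A concrete recipe is $\tilde f(z)\coleq f(z)+L\,\varphi(|z|^2-M^2)$, where $L>0$ is a sufficiently large constant and $\varphi\in\C^2(\R)$ is non-negative, non-decreasing, convex, identically zero on $(-\infty,0]$, and linear at $+\infty$. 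Tuning $\varphi$ and $L$ ensures (iii) together with a uniform positive lower bound on $\D^2\tilde f$ outside $\{|z|\le M{+}1\}$, while (i) and (ii) are immediate from the choice of $\varphi$ and ellipticity in the transition annulus follows from the strict convexity of $f$.

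With $\tilde f$ in hand, for every competitor $w\in\W^{1,2}_u(\B_\rho(x_0),\R^N)$ with $g(\,\cdot\,,w)\in\L^1(\B_\rho(x_0))$, the pointwise identity $\tilde f(\D u)=f(\D u)$ a.e.\@ on $\B_\rho(x_0)$ (from $|\D u|\le M$ and (i)), the local minimality of $u$ for $\F$, and (ii) together give
\[
  \int_{\B_\rho(x_0)}\!\bigl[\tilde f(\D u)+g(\cdot,u)\bigr]\dx
  =\int_{\B_\rho(x_0)}\!\bigl[f(\D u)+g(\cdot,u)\bigr]\dx
  \le\int_{\B_\rho(x_0)}\!\bigl[f(\D w)+g(\cdot,w)\bigr]\dx
  \le\int_{\B_\rho(x_0)}\!\bigl[\tilde f(\D w)+g(\cdot,w)\bigr]\dx,
\]
so $u$ is a local minimizer on $\B_r(x_0)$ of the functional associated with $\tilde f$ and $g$. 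Since also $u\in\W^{1,\infty}_\loc\subseteq\L^\infty_\loc$ and the pair $(\tilde f,g)$ satisfies all hypotheses of Theorem \ref{THEO: a-priori-bounded}, that theorem, applied in $\B_r(x_0)$, produces an open, full-measure subset on which $u\in\C^{1,\ahe}_\loc$. Covering $\Omega$ by countably many such balls and taking the union of the regular sets concludes the proof.

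The technical heart of the argument is the construction of $\tilde f$ satisfying (i)--(iv) simultaneously. Such truncation devices are standard, but some care is needed in the transition zone so that the added penalty contributes to ellipticity in every direction in $\R^{N\times n}$, not just the radial one, so that the global $2$-strict quasiconvexity required by Theorem \ref{THEO: a-priori-bounded} genuinely holds.
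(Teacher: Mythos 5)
Your proposal is correct, and it takes a genuinely different route from the paper's. The paper proves Theorem \ref{THEO: a-priori-Lipschitz} by revisiting the Caccioppoli inequality: in Lemma \ref{LEM: Caccioppoli inequality without quasiconvexity} the quasiconvexity step of Lemma \ref{LEM: Caccioppoli inequality} (the estimate labelled \eqref{IMPROV: boundedness Caccioppoli 1}) is replaced by a direct Taylor-expansion computation using the local ellipticity \eqref{EQCOND: Legendre}, the key point being that the a priori bound $\|\D u\|_{\L^\infty}\le\M$ forces all second-derivative arguments $\xi+\tau t\D v$ to stay in the ball of radius $M+\M$, where $\lc_{M+\M}>0$ applies. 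The rest of the chain (approximate $A$-harmonicity via Lemma \ref{LEM: IMPROV: Approximate A-harmonicity}, excess decay, Campanato) is then rerun verbatim. By contrast, you construct once and for all a globally uniformly convex $\C^2$ replacement $\tilde f$ that coincides with $f$ on the compact range of $\D u$, lies above $f$ everywhere, and keeps quadratic growth, so that $u$ is still a local minimizer of the modified functional; you then invoke Theorem \ref{THEO: a-priori-bounded} directly. Your truncation argument is sound: the Hessian identity $\D^2\bigl[\varphi(|z|^2-M^2)\bigr]\xi\cdot\xi=4\varphi''(\cdot)(z\cdot\xi)^2+2\varphi'(\cdot)|\xi|^2$ shows the penalty contributes ellipticity in every direction via the $\varphi'$-term, the transition annulus $M\le|z|\le M+1$ is handled by compactness and $\D^2 f>0$, and uniform convexity of $\tilde f$ yields $2$-strict quasiconvexity by Taylor expansion and $\int_{\B_1}\D\varphi=0$. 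The comparison of minimality values also checks out, since $\tilde f(\D u)=f(\D u)$ a.e.\@ and $\tilde f\ge f$. Your reduction is conceptually cleaner and avoids repeating the whole excess-decay machinery, at the cost of the somewhat technical (but standard) construction of $\tilde f$; the paper's route is longer but more self-contained and keeps a single integrand throughout, which matches the style of the preceding sections. One point worth making explicit in a polished write-up: because $\tilde f$ depends on the ball (through $M$), the covering argument at the end really does rely on the fact that the regular set \eqref{EQ: regular set} is determined by $u$ alone and is independent of the integrand, so the locally obtained regular sets patch consistently.
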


In order to better illustrate the significance of our assumptions \eqref{EQCOND: g-term estimate}, \eqref{EQCOND: beta and q}, \eqref{EQ: Gamma-s-gamma}, \eqref{EQ: Gamma-delta} for the zero-order integrand $g\colon\Omega\times\R^N\to\R$, we next touch briefly upon specific situations, to which we will return in full detail in the later Section \ref{SUBSEC: specific-terms}. For the moment, we tacitly understand that in case $n\in\{1,2\}$ the previously outlined modifications are in force and that the first-order integrand $f$ is suitably well-behaved. Then two model situations are as follows:

\begin{enumerate}[label={(\Alph*)}]

\item\textbf{\boldmath(uniformly) $\C^{0,\beta}$ integrands $g$:} In this basic situation, assume \eqref{EQCOND: g-term estimate}, \eqref{EQCOND: beta and q} and $\Gamma\in\L^\infty_\loc(\Omega)$. 
Then, the embedding $\L^\infty_\loc(\Omega)\subset\L^{s,n}_\loc(\Omega)$ implies \eqref{EQ: Gamma-s-gamma} and \eqref{EQ: Gamma-delta}, and our results reproduce partial $\C^{1,\ahe}$ regularity with the known exponent $\ahe=\frac\b{2-\b}$ if $\b<1$ and clearly with every $\ahe<1$ if $\b=1$.\label{ITEM: unif-Cbeta}

\item\textbf{\boldmath$\L^p$-$\C^{0,\beta}$ integrands $g$:} Now assume that \eqref{EQCOND: g-term estimate}, \eqref{EQCOND: beta and q} hold with $\Gamma\in\L^p_\loc(\Omega)$ for some $p\in{\big(\frac n\b,\infty\big)}$. 
Then, via $p\ge\sb$ and $\L^p_\loc(\Omega)\subset\L^{\sb,n-\sb n/p}_\loc(\Omega)$, we deduce \eqref{EQ: Gamma-s-gamma} with largest admissible exponent $\ahe=\frac{\b-n/p}{2-\b}$, and in case of\footnote{In case $n\ge3$ one can check by computation that this condition is an extra requirement only for $\q>2^\ast\big(1-\frac\b n\big)=2\frac{n-\b}{n-2}$.} $p\ge\sq$ we also infer \eqref{EQ: Gamma-delta}. 
Thus, our results yield partial $\C^{1,\ahe}$ regularity with $\ahe=\frac{\b-n/p}{2-\b}$ (specifically $\ahe=1{-}\frac np$ in case $\b=1$), where the additional condition $p\ge\sq$ is required for the general framework of Theorem \ref{THEO: main-intro}, but can be dropped in the a priori bounded cases of Theorems \ref{THEO: a-priori-bounded} and \ref{THEO: a-priori-Lipschitz}.\label{ITEM: Lp-Cbeta}
\end{enumerate}

In fact, a decisive motivation for the present work arises in the scalar case $N=1$ from integrands $g$ of the integral form
\begin{equation}\label{EQ: g-integral-of-H}
  g(x,y)=-\int_0^yH(x,t)\d t
  \qquad\text{for }x\in\Omega\,,y\in\R
\end{equation}
with suitable $H\in\L^1_\loc(\Omega\times\R)$. By observing that the indefinite integral of an $\L^r(\R)$ function has an $\L^r(\R)$ derivative and is $(1{-}\frac1r)$-Hölder continuous on $\R$ (or a corresponding explicit estimate with Hölder's inequality), the previous situations with $\b=\q=1-\frac1r$ then give rise to the following ones:

\begin{enumerate}[label={(\Alph*')}]
\item\textbf{\boldmath(uniformly) $\W^{1,r}$ integrands $g$:} Assume that $g$ takes the form \eqref{EQ: g-integral-of-H} with $H\in\L^\infty_\loc(\Omega,\L^r(\R))$, i.\@e.\@ $\sup_{x\in K}\|H(x,\,\cdot\,)\|_{\L^r(\R)}<\infty$ for all $K\Subset\Omega$, with some $r\in{(1,\infty]}$. We then conclude partial $\C^{1,\ahe}$ regularity with $\ahe=\frac{r-1}{r+1}$ if $r<\infty$ and clearly with every $\ahe<1$ if $r=\infty$.\label{ITEM: unif-W1r}

\item\textbf{\boldmath$\L^p$-$\W^{1,r}$ integrands $g$:} Now assume that $g$ takes the form \eqref{EQ: g-integral-of-H} with $H\in\L^p_\loc(\Omega,\L^r(\R))$, i.\@e.\@ $\int_K\|H(x,\,\cdot\,)\|_{\L^r(\R)}^p\dx<\infty$ for all $K\Subset\Omega$, with some $r\in{(1,\infty]}$ and $p\in{(nr',\infty)}$. We then conclude partial $\C^{1,\ahe}$ regularity with $\ahe=\frac{r-1-nr/p}{r+1}$ (specifically $\ahe=1{-}\frac np$ in case $r=\infty$).\label{ITEM: Lp-W1r}
\end{enumerate}

Our final application exploits Theorem \ref{THEO: a-priori-Lipschitz} in the case \ref{ITEM: Lp-W1r} with $p=r\in{(n+1,\infty)}$ and the corresponding optimal Hölder exponent $\ahe=\alpha_\mathrm{opt}\coleq\frac{p-(n+1)}{p+1}$. It concerns minimizers of Massari's functional
\[
  \mathcal{F}_H^U(F)\coleq  \P(F,U)-\int_{U\cap F}H\dx\,,
\]
defined for fixed open $U\subseteq\R^{n+1}$ and $H\in\L^1(U)$ on sets $F\subseteq\R^{n+1}$ of finite perimeter $\P(F,U)$ in $U$. In this setting, our previous work \cite{SchmidtSchuettOptHoeldExp} came up with the explicit exponent $\alpha_\mathrm{opt}$ and established partial $\C^{1,\alpha}$ regularity of minimizers of $\mathcal{F}_H^U$ with $H\in\L^p_\loc(U)$ for all $\alpha<\alpha_\mathrm{opt}$, while examples of irregularity were given for all $\alpha>\alpha_\mathrm{opt}$. Here, indeed, we finally sharpen these results by pushing regularity to the limit case $\alpha=\alpha_\mathrm{opt}$:

\begin{theo}[optimal Massari-type regularity]\label{THEO: opt-Massari}
  Consider an open set\/ $U\subseteq\R^{n+1}$ and $H\in\L^1(U)\cap\L^p_\loc(U)$ with $n+1<p<\infty$. If a set $E\subseteq\R^{n+1}$ of finite perimeter in $U$ minimizes $\mathcal{F}_{H}^U$ among all sets $F\subseteq\R^{n+1}$ of finite perimeter in $U$ such that $F\triangle E\Subset U$, then $\partial^\ast\!E\cap U$ is relatively open in $\partial E\cap U$ and is an $n$-dimensional $\C^{1,\alpha_\mathrm{opt}}$-submanifold with $\alpha_\mathrm{opt}\coleq\frac{p-(n+1)}{p+1}$. Moreover, the singular set $(\partial E\setminus\partial^\ast\! E)\cap U$ is empty in case $n\le6$ and has Hausdorff dimension at most $n{-}7$ in case $n\ge7$.
\end{theo}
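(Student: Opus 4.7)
\medskip

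\textbf{Proof plan.} The strategy is to reduce the parametric problem to the non-parametric framework and to apply Theorem \ref{THEO: a-priori-Lipschitz} in situation \ref{ITEM: Lp-W1r} with $r=p$; by design, this yields precisely the endpoint Hölder exponent $\alpha_\mathrm{opt}=\frac{p-(n+1)}{p+1}$. From the predecessor paper \cite{SchmidtSchuettOptHoeldExp} we already know that $\partial^\ast\!E\cap U$ is relatively open in $\partial E\cap U$ and an $n$-dimensional $\C^{1,\alpha}$-submanifold for every $\alpha<\alpha_\mathrm{opt}$, as well as the stated Hausdorff dimension bound on $(\partial E\setminus\partial^\ast\!E)\cap U$ (the latter via Federer's dimension-reduction argument). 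It therefore only remains to push the Hölder exponent of the gradient to the endpoint $\alpha_\mathrm{opt}$.

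Fix $x_0\in\partial^\ast\!E\cap U$. After translation and rotation there is a cylinder $C=B_\rho'\times(-h,h)\Subset U$ in which $E$ coincides with the subgraph of a Lipschitz function $u\colon B_\rho'\to(-h,h)$. The standard area/volume book-keeping --- rewriting the perimeter as an area integral, using Fubini to express $\int_F\!H\dx$ as an iterated integral over subgraphs, and matching parametric competitors $F$ with $F\sd E\Subset C$ to Sobolev variations of $u$ --- identifies $u$ as a local minimizer, in the sense of Definition \ref{DEF: local-minimizer}, of
\[
  \F[w]\coleq\int_{B_\rho'}\bigg[\sqrt{1+|\D w|^2}+g(x',w(x'))\bigg]\dx',
  \qquad g(x',y)\coleq-\int_0^y\!H(x',t)\d t.
\]
The area integrand $f(z)=\sqrt{1+|z|^2}$ is $\C^2$, strictly convex with $\D^2f(z)$ positive definite at each $z$, and of sublinear (in particular at most quadratic) growth. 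By Fubini, $\Gamma(x')\coleq\|H(x',\,\cdot\,)\|_{\L^p(-h,h)}$ lies in $\L^p(B_\rho')$, and Hölder's inequality in $t$ yields \eqref{EQCOND: g-term estimate} with $\b=\q=1-\tfrac1p$ and this $\Gamma$. Since $p>n+1=nr'$ for $r=p$, we are in situation \ref{ITEM: Lp-W1r}, whose largest exponent admissible in \eqref{EQ: Gamma-s-gamma} is exactly $\frac{\b-n/p}{2-\b}=\alpha_\mathrm{opt}$.

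Theorem \ref{THEO: a-priori-Lipschitz} now applies to the locally Lipschitz minimizer $u$ and produces $u\in\C^{1,\alpha_\mathrm{opt}}_\loc(\regreg)$ on an open $\regreg\subseteq B_\rho'$ with $|B_\rho'\setminus\regreg|=0$. Since $u$ is a priori of class $\C^1$ on \emph{all} of $B_\rho'$ by the regularity inherited from \cite{SchmidtSchuettOptHoeldExp}, the excess-type quantities featured in the explicit characterisation of the regular set alluded to after Theorem \ref{THEO: main-intro} vanish at every point of $B_\rho'$; hence $\regreg=B_\rho'$ and $u\in\C^{1,\alpha_\mathrm{opt}}_\loc(B_\rho')$. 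Covering $\partial^\ast\!E\cap U$ by such cylinders completes the proof. The main obstacle lies in the parametric/non-parametric passage of the previous paragraph: one has to verify that \emph{every} admissible Sobolev variation $w\in\W^{1,2}_u(\B_r(y),\R)$ on a ball $\B_r(y)\Subset B_\rho'$ with $g(\,\cdot\,,w)\in\L^1(\B_r(y))$ produces via its subgraph a legitimate parametric competitor $F$ with $F\sd E\Subset U$, and conversely, so that parametric minimality of $E$ truly transfers to non-parametric local minimality of $u$. This is classical for smooth $H$ but requires care under the mere $\L^p_\loc$-hypothesis; every other step is a direct application of the machinery already developed.
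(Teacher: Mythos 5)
Your overall strategy coincides with the paper's: reduce to the non-parametric functional with area integrand plus the zero-order term $g(x,y)=-\int_0^y H(x,t)\,\d t$, note that $\Gamma(x)=\|H(x,\cdot)\|_{\L^p}$ is in $\L^p_\loc$ with $\b=\q=1-\frac1p$, invoke Theorem \ref{THEO: a-priori-Lipschitz} in situation \ref{ITEM: Lp-W1r} (equivalently Corollary \ref{COR: Integral LrLp type terms} with $r=p$), and use the a priori $\C^1$ regularity from Massari/Tamanini to conclude that the regular set is the whole domain. That part is all fine.

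However, you stop exactly where the real work begins. You correctly identify the parametric-to-non-parametric passage as ``the main obstacle'' but then leave it open, whereas this is precisely the step the paper has to carry out carefully. The difficulty is not really ``$\L^p_\loc$ versus smooth $H$'' as you suggest, but rather that arbitrary $w\in\W^{1,2}_u(\B_r(y))$ need not map into the slab $(0,R)$, so its subgraph is not a legitimate parametric competitor for $E$ inside $U$. The paper resolves this in three substeps: (i) it first invokes Remark \ref{REM: VMCs}~\ref{REM: Infinitely many VMCs} to replace $H$ by $H^+\mathds{1}_E - H^-\mathds{1}_{E^\c}$, so one may assume $H\ge0$ on $E$ and $H\le0$ on $E^\c$; (ii) it extends minimality from $\C^1_\cpt$ variations (which come directly from Remark \ref{REM: VMCs}~\ref{REM: VMC implies minimizer property}) to Sobolev variations confined to $[\varepsilon,R-\varepsilon]$ by a truncation-approximation argument; and (iii) it passes to \emph{all} $\W^{1,2}$ competitors by sharp truncation $\Psi=\max\{\min\{\Phi,R-\varepsilon\},\varepsilon\}$, where the sign condition from (i) is exactly what guarantees $\tilde\G[\Phi]\ge\tilde\G[\Psi]$ for the truncated curvature $\tilde H=\mathds{1}_{[0,R]}H$. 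Without the sign normalization, this monotonicity under truncation is simply false, and the transfer of minimality cannot be completed in the generality required by Definition \ref{DEF: local-minimizer}. This missing idea is a genuine gap in your argument, not a routine detail.

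One minor further point: you attribute the statement on the singular set $(\partial E\setminus\partial^\ast\!E)\cap U$ to Federer-type dimension reduction via the predecessor paper; the paper itself simply quotes it as already contained in Massari's Theorem \ref{THEO: Massari's regularity theorem}, and notes that $\partial^\ast\!E\cap U$ being relatively open in $\partial E\cap U$ and a $\C^1$ manifold is likewise available from the start, so one may directly assume the local graph representation with a $\C^1$ (not merely Lipschitz) function $u$.
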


The plan of the paper is now as follows. We collect preliminaries and organize our assumptions in Sections 2 and 3, respectively. Then, on the basic level, we follow the known $A$-harmonic approximation strategy: We deal with a Caccioppoli inequality in Section 4, with approximate $A$-harmonicity in Section 5, and with excess estimates in Section 6. Finally, we prove Theorems \ref{THEO: main-intro}, \ref{THEO: a-priori-bounded}, \ref{THEO: a-priori-Lipschitz} and return to the model situations \ref{ITEM: unif-Cbeta}, \ref{ITEM: Lp-Cbeta}, \ref{ITEM: unif-W1r}, \ref{ITEM: Lp-W1r} in Section 7, while in Section 8 we finally establish Theorem \ref{THEO: opt-Massari}.

\bigskip

\textbf{Acknowledgment.} We thank an anonymous referee for a careful reading of the paper and some suggestions which have led to an improved exposition.

\section{Preliminaries}\label{SEC: Preliminaries}

\subsection*{Generalities}\label{SUBSEC: Generalities}

By $\B_r(x)$ we denote the ball in $\R^{n}$ with center $x\in\R^{n}$ and radius $r>0$. 
We abbreviate $\B_r\coleq\B_r(0)$. By $\c(t_1,\dots,t_k)$, $k\in\mathbb{N}$, we denote a constant which only depends on the numbers, vectors, matrices or functions $t_1,\dots, t_k$. 
Throughout the proofs, we will neglect these dependencies and only write $\c$, which may vary from line to line.
For $p\in[1,\infty)$, we define the Sobolev exponent as $p^\ast\coleq \frac{np}{n-p}$ if $p< n$ and as (arbitrarily large) number\footnote{It may seem intuitive to set $p^\ast=\infty$ for $p>n$. However, in our context, the convention above is more useful.} $p^\ast\in{(p,\infty)}$ if $p\ge n$. 
Moreover, we denote the conjugate exponent $\frac{p}{p-1}$ of $p\in{(1,\infty)}$ by $p'$. 

For functions $u\colon\R^n\to\R^N$ and constants $\zeta\in\R^N$, $\xi\in\R^{N\times n}$, we set
\begin{equation}\label{eq:u-xi-zeta}
  u_{\xi,\zeta}(x)\coleq u(x)-\zeta-\xi x
  \qquad\text{for all }x\in \R^n\,.
\end{equation}
If $u$ is even in $\L^1_\loc(\Omega,\R^N)$, we write $(u)_\Omega$ for the mean value $\dashint_\Omega u\dx\coleq \frac{1}{|\Omega|}\int_\Omega u\dx$ and abbreviate $(u)_{x,r}$ if $\Omega=\B_r(x)$ with $x\in\R^n$ and $r>0$.
For $u\in\L^p_\loc(\Omega,\R^N)$ with $p\in{[1,\infty)}$, the Lebesgue differentiation theorem gives that a.\@e.\@ $x\in\Omega$ is an $\L^p$-Lebesgue point of $u$ in the sense that $\lim_{r\searrow0}\frac{1}{|\B_r|}\int_{\B_r(x)\cap \Omega}|u -u^\ast(x)|^p\d y=0$ holds for some corresponding Lebesgue value $u^\ast(x)\in\R^N$. Moreover, $u^\ast$ equals $u$ a.\@e.\@ and is sometimes called the Lebesgue representative of $u$.

\subsection*{Morrey spaces}\label{SUBSEC: Morrey spaces}

In this paper, we use Morrey spaces in the sense of the following definition.

\begin{definition}[Morrey spaces]
    Let $p\in[1,\infty)$ and $r\in [0,\infty)$. We define the \textbf{Morrey space}
    \[\L^{p,r}(\Omega)\coleq  \{h\in\L^p(\Omega):\|h\|_{\L^{p,r}(\Omega)}<\infty \}\,,\]
    where the value 
    \[\|h\|_{\L^{p,r}(\Omega)}\coleq \sup\limits_{\rho>0,x\in\Omega}\left(\frac{1}{\rho^r}\int_{\Omega\cap \B_\rho(x)}|h|^p\d y\right)^\frac{1}{p}\]
    is called the $\L^{p,r}$ Morrey norm of $h$.
\end{definition}

\begin{rem}\label{REM: Morrey subset relation}
 Hölder's inequality yields the optimal embedding 
    \begin{equation*}
        \L^{p,n-\alpha}(\Omega)\subseteq  \L^{r,n-a}(\Omega)\qquad \text{for all }r\le\min\left\{1,\frac{a}{\alpha}\right\}p,\text{ and } p\in[1,\infty), \,\alpha,\,a\in[0,n]\,.
    \end{equation*}
    In particular, there holds $\L^p(\Omega)=\L^{p,0}(\Omega)\subseteq  \L^{r,n-\frac{n}{p}r}(\Omega)$ for all $r\in[1,p].$ Moreover, Lebesgue's differentiation theorem implies $\L^{p,n}(\Omega)=\L^{\infty}(\Omega)$ and $\L^{p,r}(\Omega)=\{0\}$ for all $r>n$.
\end{rem}

\subsection*{Quasiconvexity and growth conditions}\label{SUBSEC: Quasiconvexity and growth}

Quasiconvexity was introduced by Morrey in \cite{morrey1952quasi} as a generalization of convexity. In several cases, quasiconvexity of the integrand turned out to be equivalent with lower semicontinuity of variational integrals. Moreover, a more recent observation is that strict quasiconvexity of the integrand is equivalent with (mean) coercivity of variational integrals; see \cite{CHEKRI17}. In view of these equivalences (strict) quasiconvexity is the decisive hypothesis in the existence theory of minimizers and in addition is also crucial for partial regularity of (local) minimizers. We recast the definitions as follows.

\begin{definition}[($2$-strict) quasiconvexity]\label{DEF: quasiconvex}
  A function $h\in\C^0(\R^{N\times n})$ is called \textbf{quasiconvex} if
  \begin{equation*}
    \dashint_{\B_1} h(\xi+\D\varphi)\dx\ge h(\xi)
  \end{equation*}
  is satisfied for all $\varphi\in \C^\infty_\cpt (\B_1,\R^N)$ and all $\xi\in\R^{N\times n}$.
  It is called {\boldmath{$2$}}\textbf{-strictly quasiconvex} if, for each bound $M>0$, there exists a positive constant\/ $\Q_M$ such that
  \begin{equation}\label{EQ: quasiconvex definition}
    \dashint_{\B_1} h(\xi+\D\varphi)\dx\ge h(\xi)+ \Q_M\dashint_{\B_1} |\D \varphi|^2 \dx
  \end{equation}
  holds for all $\varphi\in \C^\infty_\cpt (\B_1,\R^N)$ and all $\xi\in\R^{N\times n}$ with $|\xi|<M$.
\end{definition}

\begin{rem}
    Whenever a function $h$ is $2$-strictly quasiconvex and has at most quadratic growth, \eqref{EQ: quasiconvex definition} stays valid for all $\varphi\in \W^{1,2}_0(\B_1,\R^N)$ by approximation.
\end{rem}

Next we record that quasiconvexity together with the quadratic growth condition of this paper naturally implies a linear bound for the derivative of the integrand, suitable rescaled growth conditions, and the Legendre-Hadamard condition for the second derivatives of the integrand.

\begin{lemma}\label{LEM: Growth on derivative of f}
    Let $h\in\C^1(\R^n)$ be a quasiconvex function such that $|h(z)|\le \lambda(1+|z|^2)$ for some $\lambda>0$ and all $z\in\R^n$. Then there exists $\Lambda=\Lambda(n,\lambda)>0$ such that $|\D h(z)|\le\Lambda(1+|z|)$ for all $z\in\R^n$.
\end{lemma}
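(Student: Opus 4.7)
The plan is to reduce the assertion to rank-one convexity along one-dimensional slices and then exploit a scaling in the growth bound. First, I would recall Morrey's classical observation that quasiconvexity implies rank-one convexity: for every rank-one matrix $\xi=a\otimes b$ (interpreting $\R^n$ as some $\R^{N\times n}$) and every $z$, the $\C^1$ map $t\mapsto h(z+t\xi)$ is convex. This is verified by inserting into the defining quasiconvexity inequality a layered test function of the form $\varphi(x)=a\,\psi(b\cdot x)$ with $\psi$ a suitably chosen piecewise-affine periodic $1$-D function whose derivative alternates between two preassigned values.

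Given this, I would fix $z\in\R^n$, set $R\coleq 1+|z|$, and consider $\phi(t)\coleq h(z+t\xi)$ for a rank-one unit $\xi$. Convexity and $\C^1$ regularity of $\phi$ yield
\[
  \frac{\phi(0)-\phi(-R)}{R}\le \phi'(0)\le \frac{\phi(R)-\phi(0)}{R}\,.
\]
The quadratic growth bound gives $|\phi(\pm R)|\le \lambda(1+(|z|+R)^2)\le 5\lambda R^2$ and $|\phi(0)|\le \lambda R^2$, so that $|\D h(z):\xi|=|\phi'(0)|\le 6\lambda R= 6\lambda(1+|z|)$. The scaling choice $t=\pm R\sim 1+|z|$ is the decisive point: with a fixed $t$, the same calculation would only return the useless quadratic bound $|\D h(z)|\lesssim 1+|z|^2$.

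Finally, I would pass from rank-one to arbitrary $\xi\in\R^{N\times n}$ via a singular value decomposition $\xi=\sum_{k=1}^{m}\sigma_k\,a_k\otimes b_k$ with $m\le n$, unit $a_k,b_k$, and $\sum_k\sigma_k^2=|\xi|^2$. Linearity of the contraction $\D h(z):\,\cdot\,$ together with the rank-one estimate and Cauchy-Schwarz give $|\D h(z):\xi|\le 6\lambda(1+|z|)\sum_k\sigma_k\le 6\sqrt n\,\lambda(1+|z|)\,|\xi|$, and taking the supremum over unit $\xi$ produces the claim with $\Lambda$ depending only on $n$ and $\lambda$. The main subtlety, beyond the standard quasiconvex-to-rank-one-convex reduction, is the scaling trick in step two; the SVD is just a tidy way to reduce general matrices to rank-one pieces.
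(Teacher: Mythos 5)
Your proof is correct and follows the standard route that the paper delegates to Giusti's \emph{Direct Methods in the Calculus of Variations} (Proposition~5.2 and Lemma~5.2): Morrey's reduction of quasiconvexity to rank-one convexity, followed by the scaled secant estimate $\frac{\phi(0)-\phi(-R)}{R}\le\phi'(0)\le\frac{\phi(R)-\phi(0)}{R}$ with $R\sim 1+|z|$, which is exactly the mechanism that converts quadratic growth of $h$ into linear growth of $\D h$. The only cosmetic deviation is in the final step: the Giusti-style presentation typically uses separate convexity (i.e.\@ rank-one convexity applied to the coordinate matrices $e_i\otimes e_j$) to bound each partial derivative and then sums, whereas you use a singular value decomposition plus Cauchy--Schwarz; both are equally valid and give a constant depending only on the dimension and $\lambda$, with your route being marginally sharper in the constant. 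Two small points worth flagging if you were to write this out in full: the layered test function $a\,\psi(b\cdot x)$ with periodic $\psi$ is not itself admissible in the quasiconvexity inequality (it lacks compact support), so the standard cut-off and limiting argument is needed; and the paper's statement writes $h\in\C^1(\R^n)$ but the quasiconvexity hypothesis implicitly presupposes the matrix-space domain $\R^{N\times n}$, as you correctly read it.
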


The proof of Lemma \ref{LEM: Growth on derivative of f} can be found, for instance, in \cite[Proposition 5.2, Lemma 5.2]{giusti2003direct}.

\begin{lemma}\label{LEM: AF inequalities}
    Let $h\in\C^2(\R^n)$ satisfy the growth condition
    \begin{equation*}
        |h(x)|\le \c_1(1+|x|^2)\qquad \text{and} \qquad  |\D h(x)|\le \c_2(1+|x|)
    \end{equation*}
    for some $\c_1,\c_2>0$ and all $x\in\R^n$. Then, for each $R>0$, there exists a positive constant $\c$ depending only on $\c_1,\c_2,R$ and $\sup_{\B_{R+1}}|\D^2 h|$ such that, for all $x\in \B_R$, $y\in\R^n$, and all $t>0$, it holds
    \begin{equation*}
        \frac{|h(x+ty)-h(x)-t\D h(x)\cdot y|}{t^2}\le \c |y|^2\qquad\text{and}\qquad \frac{|\D h(x+ty)-\D h(x)|}{t}\le \c |y|\,.
    \end{equation*}
\end{lemma}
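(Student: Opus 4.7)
\textbf{Plan for Lemma \ref{LEM: AF inequalities}.}
The plan is to split into two regimes according to whether $t|y|\le1$ or $t|y|>1$, using Taylor's theorem in the first regime and the growth conditions in the second. Throughout, fix $x\in\B_R$ and $t>0$, and let $M\coleq\sup_{\B_{R+1}}|\D^2h|$.

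In the \textbf{small-step regime} $t|y|\le1$, the segment $[x,x+ty]$ lies entirely in $\B_{R+1}$. I would apply Taylor's theorem with integral remainder: write
\[
  h(x+ty)-h(x)-t\D h(x)\cdot y = t^2\int_0^1(1-s)\,\D^2h(x+sty)(y,y)\d s
\]
and
\[
  \D h(x+ty)-\D h(x) = t\int_0^1\D^2h(x+sty)\,y\d s\,.
\]
Bounding the integrands by $M|y|^2$ and $M|y|$ respectively, and dividing by $t^2$ and $t$, yields both inequalities with constant $\frac{M}{2}$ and $M$ in this regime.

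In the \textbf{large-step regime} $t|y|>1$, I would give up on Taylor expansion and control each of the three terms separately by the assumed growth. Since $|x+ty|\le R+t|y|$, the hypothesis gives
\[
  |h(x+ty)|\le\c_1\bigl(1+(R+t|y|)^2\bigr)\le\c(1+t^2|y|^2)\,,\qquad
  |h(x)|\le\c_1(1+R^2)\le\c\,,
\]
and $|t\D h(x)\cdot y|\le t|y|\c_2(1+R)\le \c\, t|y|$. Adding these contributions produces a bound of the shape $\c(1+t|y|+t^2|y|^2)$, and in the regime $t|y|>1$ we have $1\le t^2|y|^2$ and $t|y|\le t^2|y|^2$, so this is at most $\c\, t^2|y|^2$. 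Division by $t^2$ closes the first inequality. The second is analogous: $|\D h(x+ty)|+|\D h(x)|\le\c_2(2+R+t|y|)\le\c(1+t|y|)\le\c\, t|y|$, and dividing by $t$ finishes the argument. Taking the larger of the two constants across the two regimes gives the claimed $\c$, with the advertised dependence on $\c_1,\c_2,R,M$.

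The lemma is essentially bookkeeping rather than substantive, so I do not expect any real obstacle; the only point requiring a moment of care is making sure that the threshold $t|y|=1$ is compatible with both regimes (the estimates at equality agree up to a constant), and that the constant in the Taylor regime genuinely depends on $M=\sup_{\B_{R+1}}|\D^2h|$ rather than on $\sup_{\B_R}|\D^2h|$, which is why the hypothesis is stated with radius $R+1$.
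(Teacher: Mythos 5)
Your proof is correct and follows essentially the same strategy the paper attributes to \cite[Lemma II.3]{ACEFUS87}: a case distinction based on the size of $t|y|$, with a second-order Taylor/mean-value argument on the small scale and the quadratic and linear growth hypotheses on the large scale. The only cosmetic difference is your use of the integral form of the remainder rather than the mean value theorem proper, which changes nothing of substance.
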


Lemma \ref{LEM: AF inequalities} originates from \cite[Lemma II.3]{ACEFUS87} and will primarily be employed for $h$ defined on $\R^{N\times n}$. 
The proof given there relies on the mean value theorem combined with a case distinction in terms of $|ty|$.

\begin{lemma}[Legendre-Hadamard condition]\label{LEM: LH condition}
    Consider a $2$-strictly quasiconvex function $h\in\C^2(\R^{N\times n})$ and $\xi\in\R^{N\times n}$ with $|\xi|<M$ for some $M>0$. Then $\D^2 h(\xi)$ satisfies the Legendre-Hadamard condition with constant $2\Q_M$, that is, $\D^2 h(\xi)\zeta x^T\cdot\zeta x^T\ge 2\Q_M|\zeta|^2|x|^2$ for all $x\in\R^n,\,\zeta\in\R^N$.
\end{lemma}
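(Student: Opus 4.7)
The plan is to test the $2$-strict quasiconvexity inequality \eqref{EQ: quasiconvex definition} against a carefully chosen family of oscillating test functions whose gradients concentrate on a rank-one matrix of the form $\zeta x^T$. By a Taylor expansion of $h$ around $\xi$ we will isolate the Hessian term, and a Riemann--Lebesgue type averaging will give the desired pointwise Legendre--Hadamard bound.

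Concretely, I would fix $\zeta\in\R^N$ and $x\in\R^n$, assume without loss of generality $|x|=1$ (by rescaling both sides of the inequality homogeneously), and choose a cutoff $\rho\in\C^\infty_\cpt(\B_1)$ together with a smooth $1$-periodic function $\psi\colon\R\to\R$ with $\int_0^1\psi\d s=0$ (e.g.\ $\psi(s)=\sin(2\pi s)$). For parameters $\sigma>0$ small and $\varepsilon>0$, set
\[
  \varphi_{\sigma,\varepsilon}(y)\coleq\sigma\varepsilon\,\zeta\,\rho(y)\,\psi\bigl(x\cdot y/\varepsilon\bigr)\in\C^\infty_\cpt(\B_1,\R^N)\,.
\]
A direct computation yields
\[
  \D\varphi_{\sigma,\varepsilon}(y)=\sigma\,\zeta\,\rho(y)\,\psi'\bigl(x\cdot y/\varepsilon\bigr)\,x^T+\sigma\varepsilon\,\zeta\,\D\rho(y)\,\psi\bigl(x\cdot y/\varepsilon\bigr)\,,
\]
so that $\|\D\varphi_{\sigma,\varepsilon}\|_\infty\le \c\sigma$, uniformly in $\varepsilon$, for some $\c=\c(\rho,\psi,\zeta)$. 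Choosing $\sigma$ small enough that $|\xi|+\c\sigma<M$, the $2$-strictly quasiconvex inequality with constant $\Q_M$ from Definition \ref{DEF: quasiconvex} applies to each $\varphi_{\sigma,\varepsilon}$.

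Next I would Taylor expand the integrand: since $h\in\C^2$ with a local modulus of continuity $\omega$ of $\D^2 h$ near $\xi$, one has
\[
  h(\xi+z)-h(\xi)-\D h(\xi)\cdot z-\tfrac12\D^2h(\xi)z\cdot z\le \omega(|z|)|z|^2\qquad\text{for }|z|\le \c\sigma\,.
\]
Insert $z=\D\varphi_{\sigma,\varepsilon}(y)$, integrate over $\B_1$, and observe that the linear term vanishes because $\int_{\B_1}\D\varphi_{\sigma,\varepsilon}\d y=0$ (as $\varphi_{\sigma,\varepsilon}$ is compactly supported in $\B_1$). Combining with \eqref{EQ: quasiconvex definition} gives
\[
  \tfrac12\dashint_{\B_1}\D^2h(\xi)\D\varphi_{\sigma,\varepsilon}\cdot\D\varphi_{\sigma,\varepsilon}\d y+o(\sigma^2)\ge \Q_M\dashint_{\B_1}|\D\varphi_{\sigma,\varepsilon}|^2\d y\,,
\]
with the $o(\sigma^2)$ bound uniform in $\varepsilon$.

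In the final step I would let $\varepsilon\searrow0$. The remainder term in $\D\varphi_{\sigma,\varepsilon}$ is of order $\varepsilon$ and hence drops out, while the leading term $\sigma\rho(y)\psi'(x\cdot y/\varepsilon)\zeta x^T$ is handled by Riemann--Lebesgue: for any continuous $F$, $\int_{\B_1}F(y)\psi'(x\cdot y/\varepsilon)^2\d y\to\bigl(\int_0^1\psi'(s)^2\d s\bigr)\int_{\B_1}F(y)\d y$ as $\varepsilon\searrow0$. Applied separately to $F(y)=\rho(y)^2 \D^2h(\xi)(\zeta x^T)\!\cdot\!(\zeta x^T)$ and to $F(y)=\rho(y)^2|\zeta x^T|^2=\rho(y)^2|\zeta|^2|x|^2$, the common positive factor $\sigma^2\bigl(\int\rho^2\bigr)\bigl(\int_0^1(\psi')^2\bigr)/|\B_1|$ cancels on both sides after dividing by $\sigma^2$ and sending $\sigma\searrow0$. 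This leaves exactly $\tfrac12\D^2h(\xi)(\zeta x^T)\cdot(\zeta x^T)\ge \Q_M|\zeta|^2|x|^2$, and undoing the normalisation $|x|=1$ gives the claim for general $x$.

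The routine parts are the Taylor expansion and the Riemann--Lebesgue limit; the only genuinely delicate point is the bookkeeping ensuring that the test function gradient stays below the threshold $M$ (so that the constant $\Q_M$ from $2$-strict quasiconvexity is really available), which forces the two-parameter choice with $\sigma$ independent of the oscillation scale $\varepsilon$.
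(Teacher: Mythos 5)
Your proposal is correct and takes essentially the same approach as the paper, which likewise tests the $2$-strict quasiconvexity inequality against oscillatory test fields concentrating on the rank-one matrix $\zeta x^T$ and extracts the Hessian by a second-order argument; the paper packages it by first deriving the intermediate inequality $\int\D^2h(\xi)\D\varphi\cdot\D\varphi\dx\ge 2\Q_M\int|\D\varphi|^2\dx$ for all test $\varphi$ via the second-derivative criterion for the minimum at $t=0$ of $t\mapsto\int\bigl[h(\xi+t\D\varphi)-\Q_M t^2|\D\varphi|^2\bigr]\dx$, and then lets $\tau\to\infty$ in $\eta(y)\cos(\tau x\cdot y)\zeta$ and $\eta(y)\sin(\tau x\cdot y)\zeta$, whereas you fold the Taylor expansion and the Riemann--Lebesgue averaging into a single two-parameter computation. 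One small misreading in your closing remark: the constant $\Q_M$ in Definition~\ref{DEF: quasiconvex} is available as soon as $|\xi|<M$, with no restriction whatsoever on $\|\D\varphi\|_\infty$, so no bookkeeping is needed to keep $\xi+\D\varphi$ below the threshold $M$; the smallness of $\sigma$ is still required, but only so that the Taylor remainder $\omega(\|\D\varphi_{\sigma,\varepsilon}\|_\infty)\|\D\varphi_{\sigma,\varepsilon}\|_\infty^2$ is $o(\sigma^2)$.
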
 

The proof is analogous to \cite[Proposition 5.2]{giusti2003direct}. Indeed, the second-order criterion for the minimum at $t=0$ of the single-variable function $t\mapsto\int_\Omega h(\xi+t \D \varphi)-\Q_M |t\D\varphi|^2 \dx$ gives $\int_\Omega\D^2h(\xi)\D\varphi\cdot\D\varphi\dx    \ge2\Q_M\int_\Omega|\D\varphi|^2\dx$ for all $\varphi\in\C^1_\cpt(\Omega,\R^N)$. This inequality is tested with $\varphi(y)=\eta(y)\cos(\tau x\cdot y)\zeta$ and $\varphi(y)=\eta(y)\sin(\tau x\cdot y)\zeta$, where $\eta\in\C^1_\cpt(\Omega)$ and $\tau\in\R$ are a cut-off function and a parameter, respectively, and in the limit $\tau\to\infty$ one obtains the claim. For further details we refer once more to the proof of \cite[Proposition 5.2]{giusti2003direct}.

\subsection*{\boldmath$A$-harmonic approximation}\label{SUBSEC: A-harmonic approximation}

We use the following notion of $A$-harmonic functions or in other words of weak solutions to second-order constant-coefficient linear PDE systems.

\begin{definition}[$A$-harmonic function, {{\cite[Section 3]{SCHMIDT09B}}}]
    Let $A$ be a bilinear form on $\R^{N\times n}$ satisfying
\begin{align}
    A(\zeta x^T,\zeta x^T)&\ge\lambda|\zeta|^2|x|^2\label{EQCOND: lower bound Harmonic}\\
    |A|&\le \Lambda\label{EQCOND: upper bound Harmonic}
\end{align}
for some $\lambda,\Lambda>0$ and all $x\in \R^n$, $\zeta\in \R^N$. A function $h\in \W^{1,2}(\Omega,\R^N)$ is called an {\boldmath{$A$}}\textbf{-harmonic function} on $\Omega$ if
\begin{equation*}
    \int_\Omega A(\D h,\D \varphi)\dx=0 \qquad \text{for all }\varphi\in \W^{1,2}_0(\Omega,\R^N).
\end{equation*}
\end{definition}

The next lemma can be found in a slightly more general version in \cite[Lemma 6.8]{SCHMIDT08}. It will be crucial in deriving excess estimates for local minimizers by comparison with $A$-harmonic functions and ultimately by exploiting the good estimates of linear regularity theory. 

\begin{lemma}[$A$-harmonic approximation]\label{LEM: Harmonic Approx}
Let $A$ be a bilinear form on $\R^{N\times n}$ which satisfies \eqref{EQCOND: lower bound Harmonic} and \eqref{EQCOND: upper bound Harmonic} for some $\lambda,\Lambda>0$. For every $\varepsilon>0$, there exist $\delta=\delta(\varepsilon,n,N,\lambda,\Lambda)>0$ and $\c=\c(n,N,\lambda,\Lambda)>0$ such that, whenever $w\in \W^{1,2}(\B_\rho(x_0),\R^N)$ and $\Psi\in{(0,1]}$ satisfy
\begin{equation*}
    \dashint_{\B_\rho(x_0)} |\D w|^2 \dx \le \Psi^2 \quad \text{and} \quad \left|\dashint_{\B_\rho(x_0)} A(\D w,\D \varphi)\dx\right|\le \delta\Psi\,\|\D \varphi\|_{\L^\infty(\B_\rho(x_0))}
\end{equation*}
for all $\varphi\in \W^{1,\infty}_0(\B_\rho(x_0),\R^N)$, then there exist an $A$-harmonic function $h\in\C^\infty(\B_\rho(x_0),\R^N)$ with 
\begin{equation*}
    \|\D h\|_{\C\big(\B_\frac{\rho}{2}(x_0)\big)} +\rho \|\D^2 h\|_{\C\big(\B_\frac{\rho}{2}(x_0)\big)}\le \c \quad  \text{and}\quad     \dashint_{\B_{\frac{\rho}{2}}(x_0)} \left|\frac{w-\Psi h}{\rho}\right|^2 \dx\le \Psi^2\varepsilon.
\end{equation*}
\end{lemma}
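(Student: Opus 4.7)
I would use the classical compactness/contradiction scheme for the $A$-harmonic approximation lemma. First, a straightforward rescaling reduces the statement to the case $x_0=0$, $\rho=1$: given $w$ on $\B_\rho(x_0)$, the function $\tilde w(y)\coleq w(x_0+\rho y)/\rho$ on $\B_1$ satisfies the same pair of hypotheses (with the same $\Psi$, since $\dashint_{\B_1}|\D\tilde w|^2\d y=\dashint_{\B_\rho(x_0)}|\D w|^2\dx$ and the PDE estimate transforms in the obvious way once one writes every $\tilde\varphi(x)=\rho\varphi((x-x_0)/\rho)$), and if one produces an $A$-harmonic $h$ on $\B_1$ satisfying the claimed estimates, then the rescaled $x\mapsto\rho\,h((x-x_0)/\rho)$ provides the desired object on $\B_\rho(x_0)$.

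Next, I would fix the constant $\c=\c(n,N,\lambda,\Lambda)$ using the interior regularity for constant-coefficient elliptic systems with the Legendre--Hadamard condition \eqref{EQCOND: lower bound Harmonic} and bound \eqref{EQCOND: upper bound Harmonic}: every $A$-harmonic $v\in\W^{1,2}(\B_1,\R^N)$ satisfies $\|\D v\|_{\C(\B_{1/2})}+\|\D^2 v\|_{\C(\B_{1/2})}\le\c_0\|v\|_{\L^2(\B_1)}$ for a universal $\c_0$, and I would combine this with the Poincaré inequality for mean-zero functions to determine the $\c$ appearing in the claim. Now assume for contradiction that the claim fails for some $\varepsilon_0>0$. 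Then there exist $\delta_k\searrow 0$ and pairs $(w_k,\Psi_k)$ in $\W^{1,2}(\B_1,\R^N)\times{(0,1]}$ satisfying both hypotheses but admitting no $A$-harmonic $h$ with $\|\D h\|_{\C(\B_{1/2})}+\|\D^2 h\|_{\C(\B_{1/2})}\le\c$ and $\dashint_{\B_{1/2}}|w_k-\Psi_k h|^2\dx\le\Psi_k^2\varepsilon_0$.

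Normalize by setting $v_k\coleq\Psi_k^{-1}(w_k-(w_k)_{\B_1})$, so that $(v_k)_{\B_1}=0$ and $\dashint_{\B_1}|\D v_k|^2\le1$. Poincaré gives a uniform $\W^{1,2}(\B_1,\R^N)$-bound, so up to a subsequence $v_k\rightharpoonup v$ weakly in $\W^{1,2}$ and $v_k\to v$ strongly in $\L^2(\B_1,\R^N)$. Dividing the second hypothesis by $\Psi_k$ yields $|\dashint_{\B_1}A(\D v_k,\D\varphi)\dx|\le\delta_k\|\D\varphi\|_\infty$ for every $\varphi\in\W^{1,\infty}_0(\B_1,\R^N)$; letting $k\to\infty$ in the linear functional produces $\int_{\B_1}A(\D v,\D\varphi)\dx=0$, and this extends to every $\varphi\in\W^{1,2}_0(\B_1,\R^N)$ by density (using $|A|\le\Lambda$). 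Hence $v$ is $A$-harmonic, with $\|v\|_{\L^2(\B_1)}$ controlled via Poincaré, so the interior regularity estimate above delivers the intended $\C^1$-bounds on $v$ over $\B_{1/2}$.

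The obstacle is that the constants $\Psi_k^{-1}(w_k)_{\B_1}$ may be unbounded, so $v$ itself is not directly a valid candidate for $h_k$. However, the $A$-harmonic equation and the $\C^1$-bound only see gradients, so I would simply set $h_k\coleq v+\Psi_k^{-1}(w_k)_{\B_1}$; this $h_k$ is still $A$-harmonic and still satisfies the desired derivative bounds, while
\[
  \dashint_{\B_{1/2}} |w_k-\Psi_k h_k|^2\dx
  =\Psi_k^2\dashint_{\B_{1/2}}|v_k-v|^2\dx
  \xrightarrow[k\to\infty]{}0\,,
\]
which is eventually strictly smaller than $\Psi_k^2\varepsilon_0$, contradicting the standing assumption. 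This completes the proposed proof; the only technical points to verify carefully are the scaling computation, the density extension from $\W^{1,\infty}_0$ to $\W^{1,2}_0$ test functions, and the quotability of interior $\C^2$-estimates for constant-coefficient Legendre--Hadamard systems.
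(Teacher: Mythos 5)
Your argument is correct and is the standard compactness/contradiction proof of the $A$-harmonic approximation lemma: scale to the unit ball, normalize by $\Psi_k$ after subtracting the mean, extract a weak $\W^{1,2}$/strong $\L^2$ limit, pass to the limit in the bilinear form to get an $A$-harmonic limit $v$, fix $\c$ from the interior $\C^2$-estimates for constant-coefficient Legendre--Hadamard systems together with Poincar\'e, and translate $v$ by the subtracted constant to get the contradiction. The paper itself does not reprove the lemma but only cites \cite[Lemma 6.8]{SCHMIDT08} (going back to \cite{DUZSTE02}), where precisely this indirect blow-up argument is used, so your proposal matches the underlying approach.
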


In fact, the lemma will eventually be used with the choice $A\coleq\D^2 f(\xi)$, where $f$ is the first-order integrand of our theorems and Lemma \ref{LEM: LH condition} guarantees the validity of the Legendre-Hadamard condition \eqref{EQCOND: lower bound Harmonic}.

\subsection*{An iteration lemma}\label{SUBSEC: iteration lemma}

The following lemma, for which we refer to \cite[Lemma 6.1]{giusti2003direct}, will be decisive for the proof of a later Caccioppoli inequality.

\begin{lemma}[iteration lemma]\label{LEM: Iteration lemma}
    Let $0\le r<R$ and let $v:[r,R]\to[0,\infty)$ be a bounded and non-negative function such that for all $r_1<r_2$ in $[r,R]$ the estimate 
    \begin{equation*}
        v(r_1)\le \frac{a}{(r_2-r_1)^{\tau}} +\frac{b}{(r_2-r_1)^{t}} +C + \mu v(r_2)
    \end{equation*}
    holds true with exponents $\tau,t>0$, some constants $a,b,C\ge0$ and $\mu\in[0,1)$. Then there exists a constant $\c=\c(t,\tau,\mu)>0$ such that
    \begin{equation*}
        v(r) \le \c\left(\frac{a}{(R-r)^{\tau}} +\frac{b}{(R-r)^{t}} +C \right).
    \end{equation*}
\end{lemma}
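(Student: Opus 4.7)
The plan is to iterate the hypothesis along a geometrically spaced sequence of radii and then sum up the resulting geometric series. Since $\mu\in[0,1)$, I would first fix a shrinking factor $\theta\in(0,1)$ so close to $1$ that $\mu\theta^{-\tau}<1$ and $\mu\theta^{-t}<1$; such $\theta$ exists because $\theta^{\max\{\tau,t\}}\to 1$ as $\theta\to 1^-$. Then I would define the radii $r_k\coleq r+(1-\theta^k)(R-r)$ for $k=0,1,2,\dots$, which start at $r_0=r$, increase monotonically to $R$, and have consecutive gaps $r_{k+1}-r_k=\theta^k(1-\theta)(R-r)$.

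Applying the assumed estimate with $r_1=r_k$ and $r_2=r_{k+1}$ then yields
$$v(r_k)\le\frac{a\,\theta^{-k\tau}}{(1-\theta)^\tau(R-r)^\tau}+\frac{b\,\theta^{-kt}}{(1-\theta)^t(R-r)^t}+C+\mu\,v(r_{k+1})\,.$$
Iterating this inequality $K$ times in $k$ gives
$$v(r)\le\mu^K v(r_K)+\sum_{k=0}^{K-1}\mu^k\left[\frac{a\,\theta^{-k\tau}}{(1-\theta)^\tau(R-r)^\tau}+\frac{b\,\theta^{-kt}}{(1-\theta)^t(R-r)^t}+C\right]\,.$$
Since $v$ is bounded on $[r,R]$ by assumption and $\mu^K\to 0$, the boundary term $\mu^K v(r_K)$ vanishes as $K\to\infty$. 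By the choice of $\theta$, the three series $\sum_k(\mu\theta^{-\tau})^k$, $\sum_k(\mu\theta^{-t})^k$, and $\sum_k\mu^k$ all converge, so passing $K\to\infty$ produces
$$v(r)\le c(\mu,\tau,t)\left[\frac{a}{(R-r)^\tau}+\frac{b}{(R-r)^t}+C\right]\,,$$
as claimed.

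The only delicate point, and the sole reason for the boundedness hypothesis on $v$, is controlling the remainder $\mu^K v(r_K)$ along the iteration; with $\sup_{[r,R]}v<\infty$ this is immediate. Otherwise the argument is entirely routine: it is the standard device for converting a \emph{hole-filling}-type estimate with a multiplicative defect $\mu<1$ into a genuine one-sided bound by spacing the radii fast enough that the blowing-up prefactors $(r_2-r_1)^{-\tau}$ and $(r_2-r_1)^{-t}$ are still absorbed into convergent geometric series.
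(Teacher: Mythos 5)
The paper does not prove this lemma itself but simply cites \cite[Lemma 6.1]{giusti2003direct}; your argument is correct and is essentially the standard proof found there, including the same geometric choice of radii $r_k=r+(1-\theta^k)(R-r)$, the same selection of $\theta\in(0,1)$ making $\mu\theta^{-\max\{\tau,t\}}<1$, and the same iteration-and-sum of geometric series with the boundedness of $v$ used only to discard the remainder $\mu^K v(r_K)$.
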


\subsection*{Campanato's integral characterization of Hölder continuity}\label{SUBSEC: Campanato}

The following proposition connects integral oscillation controls and Hölder continuity in an optimal way. It can be deduced from \cite[Theorem 2.9]{giusti2003direct}, for instance.

\begin{prop}[Campanato's integral characterization of Hölder continuity]\label{PROP: Campanatos characterisation}
    Let $p\in{[1,\infty)}$, $\alpha\in{(0,1]}$ and $v\in\L^p(\Omega)$. The Lebesgue representative $v^\ast$ of\/ $v$ is in $\C^{0,\alpha}_\loc(\Omega)$ if and only if for all $K\Subset\Omega$ there exists a constant $\c>0$ such that for all $x\in K$ and all $r>0$ with $\B_r(x)\subseteq \Omega$ the inequality 
    \begin{equation*}
        \dashint_{\B_r(x)}|v- (v)_{x,r}|^p\d y\le \c r^{\alpha p}
    \end{equation*}
    is satisfied.
\end{prop}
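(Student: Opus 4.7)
The plan is to prove both implications separately, keeping uniformity over compact subsets in mind throughout.

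\textbf{Forward direction.} Suppose $v^\ast\in\C^{0,\alpha}_\loc(\Omega)$. Fix $K\Subset\Omega$, choose an intermediate $K\Subset K'\Subset\Omega$, and let $L\coleq[v^\ast]_{\C^{0,\alpha}(K')}$. For $x\in K$ and $0<r<\mathrm{dist}(K,\partial K')$, one has $|v^\ast(y)-v^\ast(x)|\le L r^\alpha$ for all $y\in\B_r(x)$, so the $\L^p$-minimality of the mean value (or equivalently a direct triangle-inequality step) yields $\dashint_{\B_r(x)}|v-(v)_{x,r}|^p\d y\le(2L)^p r^{\alpha p}$. For larger $r$ (which is in any case bounded by $\mathrm{diam}(\Omega)$), a crude global bound in terms of $\|v\|_{\L^p(\Omega)}$ suffices, possibly at the cost of enlarging $\c$.

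\textbf{Reverse direction: Cauchy behaviour of averages.} Assume the Campanato-type estimate. The decisive observation is that for $0<\rho\le r$ with $\B_r(x)\subseteq\Omega$, Jensen's inequality gives
\[
|(v)_{x,\rho}-(v)_{x,r}|^p=\left|\dashint_{\B_\rho(x)}(v-(v)_{x,r})\d y\right|^p\le\left(\frac{r}{\rho}\right)^n\dashint_{\B_r(x)}|v-(v)_{x,r}|^p\d y\le \c\left(\frac{r}{\rho}\right)^n r^{\alpha p}.
\]
Iterating with the dyadic sequence $r_j\coleq 2^{-j}r$ produces $|(v)_{x,r_j}-(v)_{x,r_{j+1}}|\le \c\cdot 2^{-j\alpha}r^\alpha$, so $\{(v)_{x,r_j}\}_j$ is Cauchy with geometric rate. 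Define $\tilde v(x)\coleq\lim_{j\to\infty}(v)_{x,r_j}$; summing the telescoping series yields the radial control $|(v)_{x,r}-\tilde v(x)|\le\c r^\alpha$ for all admissible $x,r$. By Lebesgue's differentiation theorem, $\tilde v$ equals $v$ almost everywhere and agrees with $v^\ast$ at every $\L^p$-Lebesgue point.

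\textbf{Hölder estimate and conclusion.} For $x_1,x_2\in K$ at distance $r\coleq|x_1-x_2|$ small enough that $\B_{2r}(x_1)\subseteq K'$, compare the averages on $\B_r(x_2)\subseteq\B_{2r}(x_1)$ and on $\B_{2r}(x_1)$ itself: the same Jensen step gives $|(v)_{x_2,r}-(v)_{x_1,2r}|^p\le \c\dashint_{\B_{2r}(x_1)}|v-(v)_{x_1,2r}|^p\d y\le\c r^{\alpha p}$. Combining this with the radial bound applied at $x_1$ with radius $2r$ and at $x_2$ with radius $r$, the triangle inequality yields $|\tilde v(x_1)-\tilde v(x_2)|\le\c|x_1-x_2|^\alpha$. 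Since modifying $v^\ast$ on the null set where it differs from $\tilde v$ does not change the Lebesgue representative, we conclude $v^\ast\in\C^{0,\alpha}_\loc(\Omega)$, with Hölder seminorm controlled by the Campanato constant $\c$ on $K'$.

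\textbf{Main obstacle.} None of the individual steps is truly delicate; the heart of the argument is the Jensen-plus-dyadic-iteration step that simultaneously produces the continuous representative $\tilde v$ and the quantitative convergence rate $|(v)_{x,r}-\tilde v(x)|\le\c r^\alpha$. Once this is in hand, Hölder continuity is extracted by a routine comparison of averages on overlapping balls of comparable radii, and the uniformity of all constants over $K\Subset\Omega$ is inherited directly from the uniformity in the hypothesis.
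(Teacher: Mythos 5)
Your proof is correct and is the standard Campanato–Giusti argument; the paper itself does not prove this proposition but simply defers to \cite[Theorem 2.9]{giusti2003direct}, where the same Jensen-plus-dyadic-iteration scheme appears. As a small polish: rather than speaking of "modifying $v^\ast$ on a null set," note that the radial bound $|(v)_{x,r}-\tilde v(x)|\le\c\,r^\alpha$ together with the Campanato hypothesis gives $\dashint_{\B_r(x)}|v-\tilde v(x)|^p\d y\to 0$, so \emph{every} interior point is an $\L^p$-Lebesgue point with value $\tilde v(x)$ and $v^\ast=\tilde v$ holds pointwise.
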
 

\subsection*{The perimeter and variational mean curvatures}\label{SUBSEC: Perimeter and VMC}

In Section \ref{SEC: optimal Hölder exponent for Massari} we adapt our regularity results to sets of variational mean curvature in $\mathrm{L}^p$. We directly warn the reader that we set these parametric considerations in the ambient space $\R^{n+1}$ and then find the optimal exponent $\frac{p-(n+1)}{p+1}$, while in the context of the ambient space $\R^n$ the same exponent clearly reads $\frac{p-n}{p+1}$. However, our setting is convenient in transferring regularity from the non-parametric to the parametric framework.

\medskip

We now give a brief introduction to the theory of variational mean curvatures. For an $\mathcal{L}^{n+1}$-measurable set $E\subseteq  \R^{n+1}$ and an open set $U\subseteq  \R^{n+1}$, the perimeter of $E$ in $U$ is defined by 
\begin{equation*}
    \P(E,U)\coleq  \sup\left\{\int_E \mathrm{div}\varphi\dx: \varphi\in\C^1_{\mathrm{cpt}}(U,\R^{n+1}), \|\varphi\|_{\C(U)}\le 1\right\}\,.
\end{equation*}
We say that $E$ is a set of finite perimeter in $U$ if $\P(E,U)<\infty$. The perimeter measures the boundary of a set. Indeed, the structure theorem of De Giorgi guarantees $P(E,U)=\mathcal{H}^{n-1}(\partial^\ast\! E\cap U)$ for sets $E$ of locally finite perimeter in $U$, where the reduced boundary $\partial^\ast\! E$ is defined as in \cite[Definition 3.54]{ambrosio2000} and is invariant under modification of $E$ by null sets. In particular, we record $\partial E = \partial^\ast\! E$ whenever $E$ has $\C^1$ boundary.
\\
In connection with variational curvatures, we consider the measure theoretic interior 
\[E(1)\coleq \left\{x\in\R^{n+1}: \lim_{r\searrow 0}\frac{|E\cap \B_r(x)|}{|\B_r(x)|}=1\right\}\] 
as a representative of the $\mathcal{L}^n$-measurable set $E$. The main advantage is that, for every other representative $E'$ of $E$, it holds $\partial E'\supseteq \partial E(1)=\overline{\text{$\partial^\ast\!E$}}$. For more background on perimeter and $\mathrm{BV}$ theory, we refer to \cite{ambrosio2000,giusti1984,maggi2012}.

\medskip

Variational mean curvatures are motivated by the strong connection between the Massari functional $\mathcal{F}_H^U$ and the mean curvature of $\C^2$-submanifolds and in a sense indeed generalize the notion of mean curvature for (reduced) boundaries of arbitrary $\mathcal{L}^n$-measurable sets. However, the perspective differs from the classical one of differential geometry in that variational mean curvatures are indeed functions defined on the entire ambient space $\R^{n+1}$.

\begin{definition}[(local) variational mean curvatures, {{\cite[p.\@ 197]{GMT1993boundaries}}}]
Let $U\subseteq  \R^{n+1}$ be an open set. We say that a set $E\subseteq  \R^{n+1}$ of finite perimeter in $U$ has \textbf{\textup{(}local\/\textup{)} variational mean curvature \boldmath{$H\in\L^1(U)$} in \boldmath{$U$}} if 
\begin{equation*}
    \mathcal{F}_{H}^U(E)\le \mathcal{F}_{H}^U(F)\qquad \text{holds for all measurable sets }F\subseteq\R^{n+1} \text{ such that } F\sd  E\Subset U\,,
\end{equation*}
where we abbreviated
\begin{equation*}
    \mathcal{F}_H^U (F)\coleq  \P(F,U)-\int_{U\cap F}H\dx\,.
\end{equation*}
\end{definition} 

\begin{rem}[{{\cite[p. 149]{Barozzi1994}, \cite[p. 357-358]{massari1974esistenza}}}]\mbox{}\label{REM: VMCs}
\begin{enumerate}[label=\roman*)]
    \item If $H$ is a variational mean curvature of $E$ in $U$, then each $\L^1(U)$ function which is larger than $H$ on $E\cap U$ a.e. and smaller than $H$ a.e. on $U\setminus E$ is also a variational mean curvature of $E$ in $U$. In particular, from one variational mean curvature of $E$ in $U$ one can obtain infinitely many others.\label{REM: Infinitely many VMCs}
    \item If $E\subseteq \R^{n+1}$ has $\C^1$ boundary, if a part of $\partial E$ is the graph of a $\C^1$ function $v$ defined on some open $\Omega\subseteq \R^n$ such that $v(\Omega)\Subset (0,R)$ for some $R>0$, and if $H$ is a variational mean curvature of $E$ in some open $U$ such that $\overline\Omega\times{[0,R]}\subseteq U$, then $v$ minimizes the functional
    \begin{equation*}
        \G[w]\coleq  \int_\Omega \sqrt{1+|\D w(x)|^2}-\int_0^{w(x)}H(x,t)\d t\dx
    \end{equation*}
    among all $w\in \C^1_u(\Omega)$ with $w(\Omega)\Subset (0,R)$. This fact allows for transferring regularity from the non-parametric case of minimizers of \eqref{DEF: integral functional} to the parametric case of sets of variational mean curvature in $\L^p$. \label{REM: VMC implies minimizer property} 
\end{enumerate}
\end{rem}

Although Remark \ref{REM: VMCs} \ref{REM: Infinitely many VMCs} shows that variational curvatures are far from being unique, a good integrability of the curvature still implies a strong regularity result, as originally proved by Massari in \cite[Theorem 3.1, Theorem 3.2]{massari1975frontiere}.

\begin{theo}[Massari's regularity theorem]\label{THEO: Massari's regularity theorem}
Consider $p\in{(n+1,\infty]}$, $\alpha \coleq \frac{1}{4}\big(1-\frac{n+1}{p}\big)$ \textup{\big(}where we understand $\frac{1}{4}\big(1-\frac{n+1}{p}\big)=\frac{1}{4}$ in case $p=\infty$\textup{\big)}, an open set $U\subseteq\R^{n+1}$, and a set $E\subseteq \R^{n+1}$ of finite perimeter in $U$. If 
there exists a variational mean curvature $H\in\L^p(U)$ of $E$ in $U$, then the following hold.
\begin{enumerate}[label=\rm\roman*)]
    \item $\partial^\ast\! E\cap U$ is an $n$-dimensional $\C^{1,\alpha}$-manifold relatively open in $\partial E\cap U$.
    \item For all $t\in(n-7,n+1]$, it holds $\mathcal{H}^t((\partial E\setminus \partial^\ast\! E)\cap U)=0$, where we understand $\mathcal{H}^t\coleq \mathcal{H}^0$ in case $t<0$.
\end{enumerate}
\end{theo}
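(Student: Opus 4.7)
The plan is the classical De Giorgi--Massari approach, treating the $\L^p$ variational mean curvature term as a controlled perturbation of perimeter minimality. Concretely, I would first show that $E$ is an almost-minimizer of the perimeter: for any competitor $F$ with $F\sd E\Subset\B_r(x)\Subset U$, the minimality of $E$ against $\mathcal{F}_H^U$ combined with Hölder's inequality on the mean-curvature term yields
\[
  \P(E,\B_r(x))\le \P(F,\B_r(x))+c\|H\|_{\L^p(U)}\,r^{n+1-(n+1)/p}\,.
\]
Since $p>n+1$, the exponent $n+1-(n+1)/p$ strictly exceeds $n$, so the perturbation is subcritical with respect to the natural $r^n$ scaling of the perimeter.

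From this almost-minimality, a standard comparison with $E\cup\B_r(x)$ and $E\setminus\B_r(x)$ combined with the relative isoperimetric inequality yields density estimates $c_1r^n\le\P(E,\B_r(x))\le c_2r^n$ and comparable bounds of order $r^{n+1}$ on $|E\cap\B_r(x)|$ and $|\B_r(x)\setminus E|$ at every boundary point. The core of the proof is then an excess-decay lemma: writing $\Psi(x,r)$ for the normalized spherical excess of the generalized outer normal $\nu_E$, a blow-up/compactness argument --- comparing $\partial^\ast\!E$ at scale $r$ with a perimeter-minimizing half-space --- yields, once $\Psi(x,r)+r^{2\gamma}$ is sufficiently small, a decay of the form
\[
  \Psi(x,\tau r)\le\tfrac12\Psi(x,r)+Cr^{2\gamma}\,,\qquad \gamma=1-\tfrac{n+1}p\,,
\]
for a fixed $\tau\in(0,1)$. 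Iteration of this inequality gives $\Psi(x,r)\le Cr^{2\alpha}$ with $\alpha=\tfrac14\bigl(1-\tfrac{n+1}p\bigr)$, where the factor $\tfrac14$ emerges from balancing the geometric decay of $\Psi$ against the perturbation rate over repeated iteration steps.

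Having the Campanato-type decay of the excess, Proposition \ref{PROP: Campanatos characterisation} applied to $\nu_E$ represents $\partial^\ast\!E\cap U$ locally as the graph of a function with $\alpha$-Hölder gradient, proving (i). The openness of $\partial^\ast\!E\cap U$ in $\partial E\cap U$ and the Hausdorff-dimension estimate (ii) then follow from the density estimates together with the classical De Giorgi--Federer--Simons--Almgren blow-up and dimension-reduction machinery: every tangent cone of $E$ at a singular point is a perimeter-minimizing cone in $\R^{n+1}$, which must be a half-space when $n\le 6$, so that the singular set is empty there and otherwise has Hausdorff dimension at most $n-7$. The main technical obstacle is the sharp excess-decay lemma and in particular the propagation of its exponent through the iteration --- it is precisely the resulting suboptimal factor $\tfrac14$ that the authors eventually remove by routing the proof through the non-parametric framework to reach $\alpha_\mathrm{opt}$ in Theorem \ref{THEO: opt-Massari}.
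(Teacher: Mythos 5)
The paper does not prove Theorem~\ref{THEO: Massari's regularity theorem} at all: it is recalled as a known classical result and attributed directly to Massari's original papers, with the explicit citation \cite[Theorems 3.1, 3.2]{massari1975frontiere}. The authors' own contribution is to \emph{improve} the Hölder exponent to $\alpha_\mathrm{opt}=\frac{p-(n+1)}{p+1}$ in Theorem~\ref{THEO: opt-Massari}, not to reprove Massari's version, so there is no internal argument to compare your sketch against.

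That said, your reconstruction is a reasonable sketch of the standard almost-minimality route, and the first step (almost-minimality of perimeter with deficit $C\|H\|_{\L^p}r^{n+1-(n+1)/p}$ via Hölder, subcritical since $p>n+1$) is correct. The weak point is where the specific factor $\tfrac14$ is supposed to come from. You assert a decay estimate $\Psi(x,\tau r)\le\tfrac12\Psi(x,r)+Cr^{2\gamma}$ with $\gamma=1-\frac{n+1}{p}$ for one fixed $\tau$, and then claim that iteration yields $\alpha=\tfrac14\gamma$ because of a ``balancing'' of rates. As written this does not determine the exponent: iterating such an estimate gives $\Psi(x,r)\lesssim r^{2\min\{\gamma,\kappa\}}$ with $\kappa=-\frac{\ln 2}{2\ln\tau}$, so the resulting exponent depends on the relationship between $\tau$ and $\gamma$ which you neither fix nor derive, and there is no a priori reason it should be $\tfrac14\gamma$. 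Indeed, the factor $\tfrac14$ is a suboptimal artifact of Massari's specific original argument rather than a natural consequence of the iteration scheme; a clean almost-minimizer iteration à la Tamanini gives any $\alpha<\gamma$, as the paper records in its remark following the theorem. So if you wanted to actually produce Massari's stated exponent (rather than a sharper one), you would need to follow Massari's concrete comparison argument rather than a generic excess-decay scheme.
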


\begin{rem}[optimality of the parameters in Theorem \ref{THEO: Massari's regularity theorem}]\mbox{}
\begin{enumerate}[label=\roman*)]
\item
  The ranges of the parameters $p$ and $t$ are optimal. In fact, \cite[Example 2.2]{massari1994variational} and \cite[Section 2]{GMT1993boundaries} show that the theorem fails for $p\le n+1$, and in \cite[Theorem A]{bombieri1969minimal} it is proved that the Simons cone $C= \left\{x\in\R^8: x_1^2+x_2^2+x_3^2+x_4^2<x_5^2+x_6^2+x_7^2+x_8^2\right\}$ is an area-minimizing cone (i.\@e.\@ has vanishing variational mean curvature) with $\partial C\setminus\partial^\ast\!C=\{0\}$.  
\item
  As foreshadowed in the introduction, the Hölder exponent $\alpha =\frac{1}{4}\big(1-\frac{n+1}{p}\big)$ is not optimal. In fact, in our predecessor paper \cite{SchmidtSchuettOptHoeldExp} we applied Tamanini's regularity results \cite{tamaninni1984regularity} for almost-minimizers of perimeter to improve the exponent in Theorem \ref{THEO: Massari's regularity theorem} to arbitrary $\alpha<\frac{p-(n+1)}{p+1}$ and at the same time showed by counterexamples that $\C^{1,\alpha}$ regularity may fail for $\alpha>\frac{p-(n+1)}{p+1}$. In particular, this confirmed the conjecture of \cite[Remark 3.4]{massari1994variational} that Theorem \ref{THEO: Massari's regularity theorem} should hold with exponents $\alpha(n,p)$ such that $\lim_{p\to\infty}\alpha(n,p)=1$. Here, with Theorem \ref{THEO: opt-Massari} we close the last gap in this regard by showing that, for $p\in{(n+1,\infty)}$, Theorem \ref{THEO: Massari's regularity theorem} extends to the case of the limit exponent $\alpha=\alpha_{\mathrm{opt}}\coleq\frac{p-(n+1)}{p+1}$. (As a side remark, we record that in case $p=\infty$ the limit exponent $1$ and thus $\C^{1,1}$ regularity can be reached only for $n=1$, but not for $n\ge2$, as shown in \cite[Proposition 3.6]{SchmidtSchuettOptHoeldExp} and \cite[Remark 3.4]{massari1994variational}, respectively.)    
\end{enumerate}
\end{rem}

\section{Overall assumptions and settings}

We generally work with dimensions $n,N\in \mathds{N}$ and a bounded open set $ \Omega\subseteq \R^n$. As first-order integrand we fix a $\C^2$ function $f\colon\R^{N\times n}\to\R$ of at most quadratic growth
\begin{equation}\label{EQCOND: quadratic growth}
  \limsup_{|z|\to\infty}\frac{|f(z)|}{|z|^2}<\infty\,,
\end{equation}
and as zero-order integrand a Carath\'eodory function $g\colon\Omega\times \R^N\to \R$ which satisfies the Morrey-Hölder condition \eqref{EQCOND: g-term estimate} with a non-negative function $\zob$ and exponents $\b\in{(0,1]}$ and $q\in{[\beta,2^\ast)}$ as in \eqref{EQCOND: beta and q}. We continue abbreviating $\sb\coleq2^\ast/(2^\ast{-}\b)\in{(1,\infty)}$ and $\sq\coleq2^\ast/(2^\ast{-}\q)\in{(1,\infty)}$.

Next we set up conventions in order to unify the Morrey conditions \eqref{EQ: Gamma-s-gamma}, \eqref{EQ: Gamma-delta} imposed on $\zob$ for $n\ge3$ with their counterparts for $n\in\{1,2\}$. In fact, since we can replace $\Omega$ with subdomains, we may and do assume that these assumptions hold in global instead of local Morrey spaces. Furthermore, on one hand, we can assume $\ahe\le\frac\b{2-\b}$ (compare the introduction), on the other hand, for $n\in\{1,2\}$, we understand that $2^\ast\in{(2,\infty)}$ is essentially arbitrary, but large enough to ensure $\sb\le s$  for the exponent $s>1$ in Theorem \ref{THEO: main-intro} (plus a similar requirement added a few lines below). We remark that in principle this leads to a dependence of several constants in later estimates on the choice of $2^\ast$. However, whenever we explicitly list such dependencies in the sequel, we always think of the main case $n\ge3$ with $2^\ast=\frac{2n}{n-2}$ fully determined by $n$, and thus we tacitly disregard any additional dependency on the parameter $2^\ast$ in case $n\in\{1,2\}$. With these conventions in force, we may now recast \eqref{EQ: Gamma-s-gamma} for arbitrary dimension $n$ in the convenient form of
\begin{equation}\label{EQCOND: Morrey-ahe}
  \zob \in \L^{\sb,n-\sb\b+\sb(2-\b)\ahe}(\Omega)
  \text{ with }
  \ahe \in {\big(0,{\textstyle\frac{\b}{2-\b}}\big]}\cap{(0,1)}\,,
\end{equation}
and we generally abbreviate
\[
  \zob_1\coleq\|\zob\|_{\L^{\sb,n+\sb((2-\b)\ahe-\b))}( \Omega)}\,.
\]
Furthermore, in case $n\in\{1,2\}$ we require $2^\ast$ to be large enough also for $\sq\le\eta$ with the exponent $\eta>1$ of Theorem \ref{THEO: main-intro}. Then, we may recast \eqref{EQ: Gamma-delta} for arbitrary dimension $n$ as
\begin{equation}\label{EQCOND: Morrey-delta}\begin{aligned}
  \text{in case }q>\min\{2,n\}:\qquad
  &\zob\in\L^{\sq}(\Omega)\,,\\\
  \text{ in case }q\le\min\{2,n\}:\qquad
  &\zob\in\L^{\sq,n-\sq\q+2\fhe}(\Omega)
  \text{ with }\fhe\in{(0,1)}\,,
\end{aligned}\end{equation}
where e.\@g.\@ in case $n\in\{1,2\}$, $q\le n$ the passage to \eqref{EQCOND: Morrey-delta} works by observing $\zob\in\L^{\sq,n-\q}(\Omega)$ and taking $\fhe\in{(0,1)}$ such that $2\fhe\le\sq\q-\q$. We correspondingly abbreviate
\[
  \zob_2\coleq\|\zob\|_{\L^{\sq}(\Omega)}
  \qquad\qquad\text{and}\qquad\qquad
  \zob_2\coleq\|\zob\|_{\L^{\sq,n-\sq\q+2\fhe}(\Omega)}\,,
\]
respectively. Additionally, in case $q>\min\{2,n\}$ we observe $\sq\q>n$ and for later convenience still consider a fixed exponent $\fhe\in{(0,1)}$ such that
\begin{equation}\label{EQCOND: delta-add-on}
\text{in case }q>\min\{2,n\}:\qquad
  2\fhe\le\sq\q-n\,.\qquad\qquad\qquad\qquad\quad\!
\end{equation}

In handling $\D^2f$, given $t_1,\dots,t_k>0$, $k\in\mathbb{N}$, we apply the convention that $\sdb_{t_1,\dots,t_k}$ denotes a constant such that
\[
  \sdb_{t_1,\dots,t_k}\ge \sup\{|\D^2f(z)|:z\in\R^{N\times n},|z|<\c(t_1,\dots,t_k)\}\,.
\]
Moreover, since $f$ is $\C^2$, for each $M>0$, there exists a modulus of continuity $\nu_M\colon{[0,\infty)}\to{[0,\infty)}$ for $\D^2f$ such that $\lim_{t\searrow0}\nu_M(t)=\nu_M(0)=0$ and
\begin{equation}\label{EQ: modulus-of-continuity}
  |\D^2 f(A)-\D^2f(B)|\le\sqrt{\nu_M(|A-B|^2)}
  \qquad\text{for all }A,B\in\R^{N\times n}\text{ such that }|A|\le M\,,\,|B|\le M+1\,.
\end{equation}
Without loss of generality, we can choose $\nu_M$ such that it is bounded by $4\sdb_M^2$. In fact, in the sequel we mostly work with a concave upper bound $\widehat{\nu}_M$  for $\nu_M$ in the sense of the subsequent remark.

\begin{rem}\label{REM: Concave is not restrictive}
For each bounded function $\nu\colon{[0,\infty)}\to {[0,\infty)}$ with $\lim_{t\searrow0}\nu(t)=\nu(0)=0$, there exists a concave upper bound $\widehat{\nu}$ in the sense that 
    \begin{equation*}
    \widehat{\nu}\in \A_\nu \coleq \{\Theta\colon{[0,\infty)}\to{[0,\infty)}\,|\,\Theta \text{ is concave and }\Theta\ge \nu\}
    \end{equation*}
    with $\lim_{t\searrow0}\widehat{\nu}(t)=\widehat{\nu}(0)=0$ and $\sup_{[0,\infty)}\widehat{\nu}\leq\sup_{[0,\infty)}\nu$. These properties imply that $\widehat{\nu}$ is non-decreasing, and clearly one can choose $\widehat{\nu}$ as the concave hull $\inf_{\Theta\in\A_\nu}\Theta$ of $\nu$.
\end{rem}

\begin{proof}[Proof of the claim in Remark \ref{REM: Concave is not restrictive}]
    It holds $\A_\nu\neq\emptyset$ since the constant function with value $\C\coleq\sup_{[0,\infty)}\nu$ is in $\A_\nu$. Setting $\widehat\nu(t)\coleq \inf_{\Theta\in\A_\nu}\Theta(t)$ for $t\in{[0,\infty)}$, we have $\widehat\nu\in\A_\nu$ and $\widehat\nu\le\C$. For every $\varepsilon\in{(0,C]}$, there exists $\delta >0$ such that $\nu({[0,\delta]})\subseteq{[0,\varepsilon]}$. Then $\Theta^\varepsilon(t)\coleq \mathds{1}_{[\delta,\infty)}(t)\C+\mathds{1}_{[0,\delta)}(t) \left(\varepsilon+\frac{t}{\delta} (\C-\varepsilon)\right)$ is in $\A_\nu$ with $\lim_{t\searrow 0}\Theta^\varepsilon (t)=\Theta^\varepsilon(0)=\varepsilon$. Taking into account $\widehat\nu\le\Theta^\varepsilon$ and the arbitrariness of $\varepsilon$, we arrive at $\lim_{t\searrow 0}\widehat\nu(t)=\widehat\nu(0)=0$. Finally, we deduce that $\widehat\nu$ is non-decreasing: For arbitrary $t_1\le t_2$ in ${[0,\infty)}$ and $\lambda\in{(0,1]}$, we write $t_2=(1-\lambda)t_1+\lambda t_3$ for suitable $t_3\in{[0,\infty)}$ and deduce from concavity and non-negativity of $\widehat\nu$ that $\widehat\nu(t_2)\ge(1-\lambda)\widehat\nu(t_1)$. In the limit $\lambda\searrow0$ this implies $\widehat\nu(t_2)\ge\widehat\nu(t_1)$.
\end{proof}

\textbf{In all statements to follow}, we now \textbf{consider a local minimizer \boldmath$u\in\W^{1,2}(\Omega,\R^N)$} of the integral \eqref{DEF: integral functional} in the sense of Definition \ref{DEF: local-minimizer}, and we \textbf{fix a ball \boldmath$\B_\rho(x_0)\Subset \Omega$ with $\rho\in{(0,1]}$}. Beyond that, we will work in one of the following settings, which reproduce the frameworks of Theorems \ref{THEO: main-intro}, \ref{THEO: a-priori-bounded}, and \ref{THEO: a-priori-Lipschitz} in the introduction (except again for using global instead of local spaces). 

\begin{setting}\label{Setting: Quasiconvex f}
  This setting is essentially the one of Theorem \ref{THEO: main-intro} and the basic one for our purposes. We impose all assumptions previously discussed in this section and additionally assume that $f$ is $2$-strictly quasiconvex.
\end{setting}

\begin{setting}\label{Setting: release second Morrey}
  This setting is essentially the one of Theorem \ref{THEO: a-priori-bounded}. We assume that the local minimizer $u$ satisfies $u\in\L^\infty(\Omega,\R^N)$. Among the Morrey conditions discussed we keep only \eqref{EQCOND: Morrey-ahe}, while we drop \eqref{EQCOND: Morrey-delta} and \eqref{EQCOND: delta-add-on}. Instead of $\q\in{[\b,2^\ast)}$ we allow even arbitrary $\q\in{[\b,\infty)}$, and we again assume that $f$ is $2$-strictly quasiconvex.
\end{setting}

\begin{setting}\label{Setting: local convex f}
  This setting is essentially the one of Theorem \ref{THEO: a-priori-Lipschitz}. We assume that the local minimizer $u$ satisfies $u\in\W^{1,\infty}(\Omega,\R^N)$, and we define $\M\coleq  \|u\|_{\W^{1,\infty}(\Omega,\R^N)}$. Again, we keep only \eqref{EQCOND: Morrey-ahe}, but drop \eqref{EQCOND: Morrey-delta} and \eqref{EQCOND: delta-add-on}, and we allow arbitrary $\q\in{[\b,\infty)}$. Finally, we assume that, for each $M>0$, there exists $\lc_M>0$ such that we have
  \begin{equation}\label{EQCOND: Legendre}
    \D^2f(z)\xi\cdot \xi\ge \lc_M|\xi|^2
    \qquad\text{for all }\xi, z\in\R^{N\times n}
    \text{ such that }|z|\le M\,.
  \end{equation}
\end{setting}

Since \eqref{EQCOND: Legendre} implies that $f$ is convex and therefore quasiconvex (though not necessarily $2$-strictly quasiconvex in the precise sense of Definition \ref{DEF: quasiconvex}), Lemma \ref{LEM: Growth on derivative of f} and the growth condition \eqref{EQCOND: quadratic growth} imply that in each setting there exist constants $\gc>0$ and $\dgc=\dgc(n,N,\gc)>0$ such that
\begin{equation}
  |f(z)|\le \gc(1+|z|^2)\quad \text{and}\quad |\D f(z)|\le \dgc(1+|z|)
  \qquad\text{for all }z\in\R^{N\times n}.\label{EQCOND: growth condition for derivative}
\end{equation} 

\section{Caccioppoli inequality}

Caccioppoli-type inequalities are crucial in the partial regularity theory for minimizers $u$ of quasiconvex integrals. Our subsequent version essentially controls the $\L^2$ norm of $\D u_{\xi,\zeta}$ via $\L^2$ and $\L^{2^\ast}$ norms of $u_{\xi,\zeta}$, where it is crucial that the $\L^{2^\ast}$ terms are either superlinear in $u_{\xi,\zeta}$ or come with an arbitrarily small $\varepsilon$ in front.

\begin{lemma}[Caccioppoli's inequality]\label{LEM: Caccioppoli inequality} We consider Setting \ref{Setting: Quasiconvex f}. For every $M>0$ there exists a constant $\c=\c(n, N,\b,\q,\Q_M,\zob_1,\zob_2,\gc, M,\sdb_M)>0$ such that
\begin{equation*}
    \dashint_{\B_\frac{\rho}{2}(x_0)}\big|\D u_{\xi,\zeta}\big|^2 \dx
    \le \c\!\left[\dashint_{\B_\rho (x_0)}\left|\frac{u_{\xi,\zeta}}{\rho}\right|^2\dxxx+\varepsilon\left(\dashint_{\B_\rho(x_0)}\left|\frac{u_{\xi,\zeta}}{\rho}\right|^{2^\ast }\dxxx\right)^\frac{2}{2^\ast}\hspace{-.8ex}+\dashint_{\B_\rho(x_0)}\left|\frac{u_{\xi,\zeta}}{\rho}\right|^{2^\ast }\dxxx+\varepsilon^{-\frac\b{2-\b}}\rho^{2\minhe}\right]
\end{equation*}
for all $\varepsilon\in{(0,1]}$, $\zeta\in \R^N$, $\xi\in\R^{N\times n}$ with $|\zeta|+|\xi|\le M$, and the affine function $u_{\xi,\zeta}$ defined in \eqref{eq:u-xi-zeta}.
\end{lemma}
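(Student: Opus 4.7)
The plan is the classical quasiconvex Caccioppoli argument with affine comparison, modified to track the precise $\rho$-scalings produced by the Morrey assumptions on $\zob$. For $\rho/2\le r_1<r_2\le\rho$, pick a standard cutoff $\eta\in\C^\infty_\cpt(\B_{r_2}(x_0))$ with $\eta\equiv1$ on $\B_{r_1}(x_0)$ and $|\D\eta|\le 2/(r_2{-}r_1)$, and set $\varphi\coleq\eta u_{\xi,\zeta}$, $\psi\coleq(1{-}\eta)u_{\xi,\zeta}$, $w\coleq u-\varphi$. Then $w$ is admissible in the local minimality of $u$ and $\varphi\in\W^{1,2}_0(\B_{r_2}(x_0),\R^N)$ may be plugged into the $2$-strict quasiconvexity inequality.

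For the $f$-part I use the standard telescope
\[
  f(\xi+\D\varphi)-f(\xi)=\bigl[F(\D\varphi)-F(\D\varphi+\D\psi)+F(\D\psi)\bigr]+\bigl[f(\D u)-f(\D w)\bigr]
\]
with $F(A)\coleq f(\xi+A)-f(\xi)-\D f(\xi)A$, in which the linear $\D f(\xi)$ pieces cancel by $\D\varphi+\D\psi=\D u_{\xi,\zeta}$. Lemma~\ref{LEM: AF inequalities} applied to $F$ and $\D F$ bounds the first bracket by $\c(|\D\varphi||\D\psi|+|\D\psi|^2)$, minimality bounds the second bracket by $g(x,w)-g(x,u)$, and a Young absorption of the mixed term gives
\[
  \int_{\B_{r_2}(x_0)}|\D\varphi|^2\dx\le \c\int_{\B_{r_2}(x_0)}|\D\psi|^2\dx+\c\int_{\B_{r_2}(x_0)}|g(\,\cdot\,,w)-g(\,\cdot\,,u)|\dx.
\]

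For the $g$-difference, \eqref{EQCOND: g-term estimate} together with $|u-w|=|\varphi|\le|u_{\xi,\zeta}|$ and the uniform bound for $|\zeta+\xi x|$ on $\B_\rho(x_0)\Subset\Omega$ yields $|g(x,u)-g(x,w)|\le \c\,\zob(x)(|u_{\xi,\zeta}|^\b+|u_{\xi,\zeta}|^\q)$, and I estimate the two pieces separately. For the $\b$-piece, Hölder with exponents $\sb$ and $2^\ast/\b$ combined with~\eqref{EQCOND: Morrey-ahe} and the algebraic identity $n/\sb-\b=n(2{-}\b)/2$ yields $\int\zob|u_{\xi,\zeta}|^\b\dx\le \c\,\zob_1 r_2^{(2-\b)(n/2+\ahe)}(\int|u_{\xi,\zeta}|^{2^\ast}\dx)^{\b/2^\ast}$, and Young with exponents $2/\b$ and $2/(2{-}\b)$ then produces the $\varepsilon$-scaled $(\dashint|u_{\xi,\zeta}/\rho|^{2^\ast})^{2/2^\ast}$ term plus the $\varepsilon^{-\b/(2-\b)}\rho^{n+2\ahe}$ remainder. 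For the $\q$-piece, Hölder with $\sq$ and $2^\ast/\q$ together with~\eqref{EQCOND: Morrey-delta} (using $r=0$ if $\q>\min\{2,n\}$ and $r=n-\sq\q+2\fhe$ otherwise) gives $\int\zob|u_{\xi,\zeta}|^\q\dx\le \c\, r_2^{r/\sq}(\int|u_{\xi,\zeta}|^{2^\ast}\dx)^{\q/2^\ast}$, and a Young with the specific choice $\varepsilon_2=\c\rho^{-2^\ast}$ yields $\c\rho^{-2^\ast}\!\int|u_{\xi,\zeta}|^{2^\ast}\dx+\c\rho^{\sq\q+r}$. Condition~\eqref{EQCOND: delta-add-on} and the explicit value of $r$ in the second subcase respectively give $\sq\q+r\ge n+2\fhe$, bounding the remainder by $\c\rho^{n+2\fhe}$.

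Finally, $|\D\psi|^2\le \c\bigl[(r_2{-}r_1)^{-2}|u_{\xi,\zeta}|^2+|\D u_{\xi,\zeta}|^2\bigr]\chi_{\B_{r_2}(x_0)\setminus\B_{r_1}(x_0)}$, and the hole-filling identity $\int_{\B_{r_2}\setminus\B_{r_1}}|\D u_{\xi,\zeta}|^2\dx=v(r_2)-v(r_1)$ with $v(s)\coleq\int_{\B_s(x_0)}|\D u_{\xi,\zeta}|^2\dx$ rearranges the combined estimate into $v(r_1)\le\mu v(r_2)+\Phi(r_1,r_2)$ with $\mu\in{[0,1)}$. Lemma~\ref{LEM: Iteration lemma} absorbs the self-term, division by $|\B_{\rho/2}|$ converts everything into averages, and the collapse $\rho^{n+2\ahe}+\rho^{n+2\fhe}\le \c\rho^{n+2\minhe}$ for $\rho\le1$ yields the statement. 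The main obstacle is the bookkeeping in the $\q$-piece: the Young parameter must be chosen to produce the \emph{unscaled} $\dashint|u_{\xi,\zeta}/\rho|^{2^\ast}$ term rather than an $\varepsilon$-scaled one, and \eqref{EQCOND: delta-add-on} together with the case split in~\eqref{EQCOND: Morrey-delta} is precisely what keeps the resulting remainder at the desired order $\rho^{n+2\fhe}$.
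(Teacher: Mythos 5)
Your proposal is correct and follows essentially the same route as the paper's proof: the same cutoff decomposition with $\varphi = \eta u_{\xi,\zeta}$ and $\psi = (1-\eta)u_{\xi,\zeta}$, the same use of $2$-strict quasiconvexity and minimality, the same pair of Hölder/Young splits on the $\b$- and $\q$-pieces of the $g$-difference (with identical Morrey-exponent arithmetic), and the same hole-filling plus iteration lemma to conclude. Your telescope identity is just a compact rewriting of the paper's four-term decomposition $\mathrm{I}+\mathrm{II}+\mathrm{III}+\mathrm{IV}$, and your Hölder-then-Young treatment of the $\q$-piece is interchangeable with the paper's direct pointwise Young's inequality.
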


\begin{proof}
 W.\@l.\@o.\@g.\@ we assume $x_0=0$. In order to apply Lemma \ref{LEM: Iteration lemma} for $R=\rho$ and $r=\frac{\rho}{2}$, we fix $r_1<r_2$ in $\left(\frac{\rho}{2}, \rho\right)$. 
 Moreover, let $\eta\in \C^\infty_\cpt(\R^n)$ be a cut-off function with $\mathrm{spt}(\eta)\subseteq  \B_{r_2}$, $\eta= 1$ on $\B_{r_1}$ and $|\D \eta|\le \frac{2}{r_2-r_1}$. We set $v(x)\coleq u_{\xi,\zeta}(x) =  u(x)-\zeta-\xi x$, $\varphi\coleq  \eta v$ and $\psi\coleq  (1-\eta)v$. 
 By the product rule we have $\psi\in \W^{1,2}(\B_\rho,\R^N)$ and $\varphi\in \W^{1,2}_0(\B_\rho,\R^N)$ with $\varphi=v$ on $\B_{r_1}$. Employing the quasiconvexity of $f$ and the bound $|\xi|\le M$ we then estimate
\begin{equation}\label{IMPROV: boundedness Caccioppoli 1}
    \Q_M\int_{\B_{r_1}} |\D v|^2 \dx \le \Q_M\int_{\B_{r_2}} |\D \varphi|^2 \dx \le \int_{\B_{r_2}} f(\xi+\D \varphi) -f(\xi)\dx\,.
\end{equation}
Now, the equality $\D u-\xi=\D v =\D \varphi +\D \psi$ on $\B_\rho$ gives
\begin{equation}\label{IMPROV: boundedness Caccioppoli 2}
\begin{split}
    \Q_M\int_{\B_{r_1}} |\D v|^2 \dx &\le \int_{\B_{r_2}}f(\xi+\D \varphi)-f(\xi) \dx\\
    &= \F[u]-\F[u-\varphi]+ \int_{\B_{r_2}}g(x,u-\varphi)-g(x,u)\dx\\
    &\quad + \int_{\B_{r_2}}f(\D u-\D \psi)-f(\D u)\dx+ \int_{\B_{r_2}} f(\xi+\D \psi)- f(\xi)\dx\,.
\end{split}
\end{equation}
The vanishing of $\D \psi$ on $\B_{r_1}$ leads to
\begin{align*}
    \Q_M\int_{\B_{r_1}} |\D v|^2 \dx  &\le \big(\F[u]-\F[u-\varphi]\big)
    + \int_{\B_{r_2}}g(x,u-\varphi)-g(x,u)\dx\\
    &\quad +\int_{\B_{r_2}{\setminus} \B_{r_1}} f(\D u-\D \psi)-f(\D u)+ \D f(\xi)\D\psi\dx\\
    &\quad + \int_{\B_{r_2}{\setminus} \B_{r_1}} f(\xi+\D \psi)- f(\xi)- \D f(\xi)\D\psi\dx\\
    &\eqcol\mathrm{I}+\mathrm{II}+\mathrm{III}+\mathrm{IV}\,.
\end{align*}
Next we suitably estimate the right-hand side terms $\mathrm{I}$, $\mathrm{II}$, $\mathrm{III}$, and $\mathrm{IV}$. Clearly, we can control $\mathrm{I}$ by observing $\F[u]-\F[u-\varphi]\le 0$ thanks to the minimizing property of $u$. Next we handle the term $\mathrm{II}$, which for our purposes is the decisive one. To this end we recall assumption \eqref{EQCOND: g-term estimate} which gives
\begin{equation}\label{IMPROV: Caccioppoli inequality}
      \mathrm{II}=\int_{\B_{r_2}}g(x,u-\varphi)-g(x,u)\dx\le\int_{\B_{r_2}}\zob(1+|u|+|\varphi|)^{\q-\b}|\varphi|^\b\dx\le\int_{\B_{r_2}}\zob(1+|u|+|v|)^{\q-\b}|v|^\b\dx.
\end{equation}
Taking into account $|\zeta|+|\xi|\le M$ we deduce
\begin{equation}\label{EQ: g-integral estimation}
      \mathrm{II}\le \c\bigg(\int_{ \B_\rho}|v|^\b \zob\dx
    +\int_{ \B_\rho}|v|^\q \zob \dx\bigg)\,.
\end{equation}
The application of Hölder's inequality with exponents $\frac{2^\ast}\b$ and $\sb=\frac{2^\ast}{2^\ast-\b}$ gives
\begin{equation*}
    \int_{ \B_\rho}|v|^\b \zob\dx \le \left(\int_{ \B_\rho}|v|^{2^\ast} \dxx\right)^{\frac{\b}{2^\ast}}\left(\int_{ \B_\rho}\zob^{\sb}\dx\right)^{\frac{1}{\sb}}\,.
\end{equation*}

For further estimating the preceding term, we distinguish the cases $n\ge3$ and $n\in\{1,2\}$. In the case $n\ge3$, we employ Young's inequality with exponents $\frac{2}{\b}$ and $\frac2{2-\b}$ to conclude (with the arbitrary $\varepsilon>0$ from the statement of Lemma \ref{LEM: Caccioppoli inequality})
\begin{equation*}
    \int_{ \B_\rho}|v|^\b \zob\dx \le \varepsilon\left(\int_{ \B_\rho}|v|^{2^\ast} \dxx\right)^\frac{2}{2^\ast}+\varepsilon^{-\frac{\b}{2-\b}}\left(\int_{ \B_\rho} \zob^{\sb}\dx\right)^{\frac{2}{\sb(2-\b)}}\,.
\end{equation*}
Taking into account the Morrey condition $\zob\in\L^{s_\b,n-\sb\b+\sb(2-\b)\ahe}(\Omega)$ from \eqref{EQCOND: Morrey-ahe} for $\zob$ and the equality $(n-\sb\b)\frac2{\sb(2-\b)}=n$ (where the latter follows in case $n\ge3$ from the definitions of $\sb$ and $2^\ast$), we deduce
\begin{equation*}
    \int_{ \B_\rho}|v|^\b \zob\dx \le \varepsilon\left(\int_{ \B_\rho}|v|^{2^\ast} \dxx\right)^\frac{2}{2^\ast}+\zob_1^\frac2{2-\b}\varepsilon^{-\frac{\b}{2-\b}}\rho^{n+2\ahe}\,.
\end{equation*}
In view of $\frac{2}{2^\ast}n+2=n$ we further get 
\begin{equation}\label{EQ: estimate with Morrey constants}
    \int_{ \B_\rho}|v|^\b \zob\dx \le \c\Biggl[\varepsilon\rho^n\left(\dashint_{ \B_\rho}\left|\frac{v}{\rho}\right|^{2^\ast} \dxx\right)^\frac{2}{2^\ast}+\varepsilon^{-\frac{\b}{2-\b}}\rho^{n+2\ahe}\Biggr].
\end{equation}
Next we turn to the case $n\in\{1,2\}$ which requires a slight technical modification of the preceding estimates. Indeed, we still apply Young's inequality with exponents $\frac2\b$ and $\frac2{2-\b}$, but with the $\varepsilon$ in the preceding now replaced by $\varepsilon\rho^{n-\frac2{2^\ast}n-2}$. This means that we first get
\begin{equation*}
    \int_{ \B_\rho}|v|^\b \zob\dx \le \varepsilon\rho^{n-\frac2{2^\ast}n-2}\left(\int_{ \B_\rho}|v|^{2^\ast} \dxx\right)^\frac{2}{2^\ast}+\varepsilon^{-\frac{\b}{2-\b}}\rho^{\frac\b{2-\b}(\frac2{2^\ast}n+2-n)}\left(\int_{ \B_\rho} \zob^{\sb}\dx\right)^{\frac{2}{\sb(2-\b)}}\,.
\end{equation*}
Then we arrive at \eqref{EQ: estimate with Morrey constants} also in this case and essentially as before, since in spite of now $n\frac{2}{2^\ast}+2>n$ we still have $\frac\b{2-\b}(\frac2{2^\ast}n+2-n)+(n-\sb\b)\frac2{\sb(2-\b)}=n$ even with our convention of an arbitrary $2^\ast>2$ in case $n\in\{1,2\}$.

At this point we continue our reasoning back in arbitrary dimension $n$, but in estimating the second integral on the right-hand side of \eqref{EQ: g-integral estimation}, we distinguish cases for $q$. We first treat the case $q\le\min\{2,n\}$. By Young's inequality with exponents $\frac{2^\ast}{q}$ and $\sq$ and by the Morrey condition $\zob\in\L^{\sq,n-\sq\q+2\fhe}(\Omega)$ of \eqref{EQCOND: Morrey-delta}, it follows
\begin{equation}\label{EQ: q-term estimate}
    \int_{ \B_\rho}|v|^\q \zob \dx\le \int_{ \B_\rho}\left|\frac{v}{\rho}\right|^{2^\ast}\dxxx
    +\rho^{\sq\q}\int_{ \B_\rho}\zob^\sq\dx \le \int_{ \B_\rho}\left|\frac{v}{\rho}\right|^{2^\ast}\dxxx
    +\zob_2^\sq\rho^{n+2\fhe}\,.
\end{equation}
In the opposite case $q>\min\{2,n\}$, taking into account that $\zob\in\L^\sq(\Omega)$ and $\q\sq\ge n+2\fhe$ by \eqref{EQCOND: Morrey-delta} and \eqref{EQCOND: delta-add-on} and further recalling $\rho\le1$, we readily observe that the estimate \eqref{EQ: q-term estimate} is also valid.

Now, back to the general case we collect the estimates \eqref{EQ: g-integral estimation}, \eqref{EQ: estimate with Morrey constants}, \eqref{EQ: q-term estimate} and obtain
\begin{equation*}
    \mathrm{II} \le \c\Bigg[\varepsilon\rho^n\Bigg(\dashint_{\B_\rho}\bigg|\frac{v}{\rho}\bigg|^{2^\ast}\dxxx\Bigg)^{\frac{2}{2^\ast}}+\int_{\B_\rho}\bigg|\frac{v}{\rho}\bigg|^{2^\ast}\dxxx+\varepsilon^{-\frac\b{2-\b}}\rho^{n+2\min\{\ahe,\fhe\}}\Bigg]\,.
\end{equation*}

Next we turn to the term $\mathrm{III}$. Taking \eqref{EQCOND: growth condition for derivative} into account, we apply Lemma \ref{LEM: AF inequalities} to estimate
\begin{align*}
    \mathrm{III}
    &= \int_{\B_{r_2}\setminus \B_{r_1}} f(\xi +\D v-\D \psi)-f(\xi)+f(\xi)-f(\xi+\D v)+ \D f(\xi)\D\psi\dx\\
    &= \int_{\B_{r_2}\setminus \B_{r_1}} f(\xi +\D v-\D \psi)-f(\xi)-\D f(\xi)(\D v-\D\psi)- \int_0^1 \left[\D f(\xi+t\D v)-\D f(\xi)\right]\D v\d t\dx\\
    &\le \c \int_{\B_{r_2}\setminus \B_{r_1}}|\D v -\D \psi|^2+|\D v|^2 \dx\,.
\end{align*}
For the term $\mathrm{IV}$, the same type of estimate implies
\[
    \mathrm{IV}
    = \int_{\B_{r_2}\setminus \B_{r_1}} \int_0^1 \left[\D f(\xi+t\D \psi)-\D f(\xi)\right]\D\psi\d t\dx \\
    \le \c \int_{\B_{r_2}\setminus \B_{r_1}} |\D\psi|^2 \dx.
\]
The previous estimates can then be combined in order to jointly estimate $\mathrm{III}$ and $\mathrm{IV}$ by
\[
    \mathrm{III}+\mathrm{IV}
    \le \c \int_{\B_{r_2}\setminus \B_{r_1}} |\D v|^2+|\D \psi|^2 \dx
    \le \c \int_{\B_{r_2}\setminus \B_{r_1}}  |\D v|^2+\left|\frac{v}{r_2-r_1}\right|^2 \dxx\,.
\]
Collecting all estimates, we have
\begin{multline*}
        \int_{\B_{r_1}} |\D v|^2 \dx\\
        \le \c \left(\int_{\B_{r_2}\setminus \B_{r_1}}|\D v|^2\d x+ \int_{\B_\rho} \left|\frac{v}{r_2-r_1}\right|^2 \dxx+\varepsilon\rho^n\left(\dashint_{ \B_\rho}\left|\frac{v}{\rho}\right|^2 \dxx\right)^\frac{2}{2^\ast}+\int_{ \B_\rho}\left|\frac{v}{\rho}\right|^{2^\ast} \dxxx+\varepsilon^{-\frac\b{2-\b}}\rho^{n+2\minhe}\right)\,.
\end{multline*}
In order to apply Lemma \ref{LEM: Iteration lemma}, we need a constant less than $1$ in front of the first term on the right-hand side. Therefore, using Widman’s hole filling
trick, we first add $\c\int_{\B_{r_1}} |\D v|^2 \dx$ and then divide by $1+\c$ to get 
\begin{multline*}
        \int_{\B_{r_1}} |\D v|^2 \dx\\
        \le \frac{\c}{1+\c} \int_{\B_{r_2}}|\D v|^2\d x+ \int_{\B_\rho} \left|\frac{v}{r_2-r_1}\right|^2 \dxx+\varepsilon\rho^n\left(\dashint_{ \B_\rho}\left|\frac{v}{\rho}\right|^2 \dxx\right)^\frac{2}{2^\ast}+\int_{ \B_\rho}\left|\frac{v}{\rho}\right|^{2^\ast} \dxxx+\varepsilon^{-\frac\b{2-\b}}\rho^{n+2\minhe}\,.
\end{multline*}
Finally, by Lemma \ref{LEM: Iteration lemma}, we conclude
\begin{equation*}
    \dashint_{\B_\frac{\rho}{2}} |\D v|^2 \dx
    \le \c\left( \dashint_{\B_\rho} \left|\frac{v}{\rho}\right|^2\dxx+\varepsilon\left(\dashint_{ \B_\rho}\left|\frac{v}{\rho}\right|^2 \dxx\right)^\frac{2}{2^\ast}+\dashint_{\B_\rho}\left|\frac{v}{\rho}\right|^{2^\ast} \dxxx+\varepsilon^{-\frac\b{2-\b}}\rho^{n+2\minhe}\right)\,.
\end{equation*}
This completes the proof of the lemma.
\end{proof}

\section{Approximate \texorpdfstring{\boldmath$A$}{A}-harmonicity}

The following estimate prepares the ground for eventually applying the $A$-harmonic approximation of Lemma \ref{LEM: Harmonic Approx} with $A=\D^2f(\xi)$.

\begin{lemma}[approximate $A$-harmonicity]\label{LEM: approximate harmonicity} We consider Setting \ref{Setting: Quasiconvex f}. For each bound $M>0$, there exists a constant $\c=\c(n,N,\b,\q,\zob_1,\zob_2,\gc,M,\sdb_M)>0$ such that
\[
    \bigg|\dashint_{\B_\rho(x_0)}\D^2 f(\xi)\D u_{\xi,\zeta}\cdot\D\varphi\dx \bigg|
    \le \c\Bigg(\dashint_{\B_\rho(x_0)}\Bigg|\frac{u_{\xi,\zeta}}{\rho}\Bigg|^{2^\ast }\dxxx+ \sqrt{\widehat{\nu}_M(\Phi)\Phi}+\Phi+\rho^{\ahe}\Bigg)\|\D \varphi\|_{\L^\infty(\B_\rho(x_0))}
\]
for all $\varphi\in\W^{1,\infty}_0(\B_\rho(x_0),\R^N)$, $\zeta\in\R^N$, $\xi\in\R^{N\times n}$ with $|\zeta|+|\xi|\le M$,
where we use
\begin{equation*}
    \Phi\coleq  \dashint_{\B_\rho(x_0)}\big|\D u_{\xi,\zeta}\big|^2 \dxx\,,
\end{equation*}
$\widehat{\nu}_M$ from Remark \ref{REM: Concave is not restrictive}, and $u_{\xi,\zeta}$ from \eqref{eq:u-xi-zeta}.
\end{lemma}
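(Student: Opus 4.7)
The plan is to isolate the target bilinear expression as a principal part plus a Taylor-type remainder, and to control each piece by combining an Euler--Lagrange-type inequality coming from testing minimality of $u$ against $u\pm\varphi$ with the Morrey--Hölder structure of $g$. By linearity in $\varphi$, I may normalize $\|\D\varphi\|_{\L^\infty(\B_\rho(x_0))}=1$ from the outset.

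\textbf{Splitting off the Taylor remainder.} Since $\int_{\B_\rho(x_0)}\D\varphi\dx=0$, the constant $\D f(\xi)$ drops out for free, so with
\[
  \D^2 f(\xi)\D u_{\xi,\zeta}=\bigl[\D f(\D u)-\D f(\xi)\bigr]+E(x),\quad E(x)\coleq -\int_0^1\!\bigl[\D^2 f(\xi+s(\D u-\xi))-\D^2 f(\xi)\bigr](\D u-\xi)\d s
\]
the target integral decomposes into a remainder piece $\dashint E\cdot\D\varphi\dx$ and a principal piece $\dashint\D f(\D u)\cdot\D\varphi\dx$. I bound $|E|$ by splitting $\B_\rho(x_0)$ into the good set $\{|\D u-\xi|\le 1\}$ (on which $|\D u|\le M+1$, so \eqref{EQ: modulus-of-continuity} together with the monotonicity of $\widehat{\nu}_M$ gives $|E|\le\sqrt{\widehat{\nu}_M(|\D u-\xi|^2)}\,|\D u-\xi|$) and the bad set $\{|\D u-\xi|>1\}$ (on which Lemma \ref{LEM: Growth on derivative of f} yields the crude bound $|E|\le\c|\D u-\xi|^2$, using $|\D u-\xi|\ge 1$). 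Cauchy--Schwarz on the good set followed by Jensen's inequality for the concave function $\widehat{\nu}_M$ (Remark \ref{REM: Concave is not restrictive}) produces the $\sqrt{\widehat{\nu}_M(\Phi)\Phi}$ contribution, while the bad-set integral produces the $\Phi$ term.

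\textbf{Controlling the principal integral via minimality.} For the $\D f(\D u)$-piece I test minimality of $u$ against the admissible competitors $u\pm\varphi$ and Taylor-expand $f(\D u\pm\D\varphi)-f(\D u)=\pm\D f(\D u)\cdot\D\varphi+R_\pm(x)$. The quadratic remainder $R_\pm$ I handle in the same spirit as $E$: on $\{|\D u|\le M+1\}$ the local boundedness of $\D^2 f$ yields $|R_\pm|\le\c|\D\varphi|^2$; on $\{|\D u|>M+1\}$ Lemma \ref{LEM: Growth on derivative of f} gives $|R_\pm|\le\c|\D u-\xi||\D\varphi|$, which via $|\D u-\xi|\ge 1$ improves to $|R_\pm|\le\c|\D u-\xi|^2$. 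Taking the ``$\pm$''-average of the two one-sided minimality inequalities then produces
\[
  \bigg|\dashint_{\B_\rho(x_0)}\D f(\D u)\cdot\D\varphi\dx\bigg|\le \c(1+\Phi)+\dashint_{\B_\rho(x_0)}\max_{\pm}\bigl|g(x,u\pm\varphi)-g(x,u)\bigr|\dx.
\]

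\textbf{Estimating the $g$-term.} Inserting \eqref{EQCOND: g-term estimate} and using $|u|\le|u_{\xi,\zeta}|+M(1+\rho)$ together with $|\varphi|\le\rho$ reduces the $g$-contribution to $\c\int\Gamma(1+|u_{\xi,\zeta}|)^{\q-\b}|\varphi|^\b\dx$; I then split $(1+|u_{\xi,\zeta}|)^{\q-\b}\le\c+\c|u_{\xi,\zeta}|^{\q-\b}$. The constant piece is bounded by $\rho^\b\int\Gamma\dx$, and Hölder's inequality combined with the Morrey condition \eqref{EQCOND: Morrey-ahe} converts this into $\c\rho^{n+(2-\b)\ahe}\le\c\rho^{n+\ahe}$ (exploiting $\b\le 1$ and $\rho\le 1$). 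For the piece with $|u_{\xi,\zeta}|^{\q-\b}$ I apply a three-term Young's inequality to $|u_{\xi,\zeta}|^{\q-\b}|\varphi|^\b\Gamma$ with exponents $2^\ast/(\q-\b),\,2^\ast/\b,\,\sq$ (summing to $1$ by definition of $\sq$), then integrate using $|\varphi|\le\rho$ and the Morrey condition \eqref{EQCOND: Morrey-delta} on $\Gamma$ (with $\fhe$ chosen via \eqref{EQCOND: delta-add-on} in the case $\q>\min\{2,n\}$); after pulling the natural $\rho$-factors in front of $|u_{\xi,\zeta}|^{\q-\b}$ and $|\varphi|^\b$ one arrives at a bound of the form $\c\rho^n\bigl(\dashint|u_{\xi,\zeta}/\rho|^{2^\ast}\dx+\rho^{\ahe}\bigr)$ upon using $\rho\le 1$. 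Collecting all contributions and dividing by $\rho^n$ yields the claim.

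\textbf{Main obstacle.} The trickiest step is the bookkeeping of exponents in the $g$-term, where one must marry the two Morrey scales---the $\ahe$-tuned hypothesis \eqref{EQCOND: Morrey-ahe} for the ``$\b$-piece'' with the auxiliary \eqref{EQCOND: Morrey-delta} (and \eqref{EQCOND: delta-add-on}) for the ``$\q$-piece''---with the Sobolev exponent $2^\ast$ so that after Young's inequality one arrives precisely at the critical power $\rho^{\ahe}$ on top of the main $\dashint|u_{\xi,\zeta}/\rho|^{2^\ast}\dx$ term. A secondary subtlety is that the bad-set contribution in the Taylor remainder step admits only the crude linear-in-$\Phi$ bound, rather than the sharper $\sqrt{\widehat{\nu}_M(\Phi)\Phi}$ one, which is why both summands necessarily appear in the final right-hand side.
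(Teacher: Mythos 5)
There is a genuine gap in the minimality step. You test minimality against $u\pm\varphi$ and bound the quadratic Taylor remainder $R_\pm$ of $f$ on the good set $\{|\D u|\le M+1\}$ by $\c|\D\varphi|^2$; after the normalization $\|\D\varphi\|_{\L^\infty(\B_\rho(x_0))}=1$ this yields a constant contribution $\c$, visible as the ``$1$'' in your intermediate bound $\c(1+\Phi)+\dots$. This constant cannot be absorbed into the claimed right-hand side
\[
  \c\bigg(\dashint_{\B_\rho(x_0)}\Big|\frac{u_{\xi,\zeta}}{\rho}\Big|^{2^\ast}\dx+\sqrt{\widehat{\nu}_M(\Phi)\Phi}+\Phi+\rho^{\ahe}\bigg)\,,
\]
since every term there vanishes as $\rho\searrow0$ and $\Phi\searrow0$; that decay is exactly what the $A$-harmonic approximation of Lemma \ref{LEM: Harmonic Approx} needs, and without it the estimate is too weak for the intended application. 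The proof therefore fails at this point.

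The paper's proof avoids this by comparing $u$ with $u-\sigma\varphi$, where $\sigma\coleq\rho^{\ahe}\in(0,1]$ is a scaling parameter, rather than with $u\pm\varphi$. The resulting Taylor remainder of $f$ then scales like $\sigma^2|\D\varphi|^2$, and after dividing through by $\sigma$ (to recover the unscaled linear term $\D f(\D u)\cdot\D\varphi$) the good-set contribution becomes $O(\sigma)=O(\rho^{\ahe})$, which does fit the claimed bound. The price is that the $g$-difference $g(x,u-\sigma\varphi)-g(x,u)$ picks up a factor $\sigma^{\b}/\sigma=\sigma^{\b-1}=\rho^{\ahe(\b-1)}$, which is large for $\b<1$; this is exactly compensated by the Morrey hypothesis \eqref{EQCOND: Morrey-ahe} together with the constraint $\ahe\le\frac{\b}{2-\b}$, which makes the exponents balance (cf.\@ the paper's derivation of \eqref{EQ: est-g-diff}). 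This $\sigma$-scaling is the essential ingredient missing from your argument; the rest of your decomposition --- the Taylor-remainder split into the good set controlled by $\widehat{\nu}_M$ and the bad set giving $\Phi$, and the Morrey bookkeeping of the $g$-term --- parallels the paper's treatment of the terms $\mathrm{I}$ and $\mathrm{III}$ in \eqref{EQ: AA-split-terms}.
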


\begin{proof}
W.\@l.\@o.\@g.\@ we assume $x_0=0$ and $\|\D \varphi\|_{\L^\infty(\B_\rho)}= 1$, which in particular implies $\|\varphi\|_{\L^\infty(\B_\rho)}\le \rho$. We abbreviate once more $v\coleq u_{\xi,\zeta}$ and --- in order to achieve convenient balance between the terms $\sigma^{\b-1}\rho^{(2-\b)\ahe}$ and $\sigma$ in the subsequent estimates \eqref{EQ: est-g-diff} and \eqref{EQ: est-Df-diff}, respectively --- choose
\[
  \sigma\coleq\rho^\ahe\in{(0,1]}\,.
\]
We then split integrals as follows (where in particular we use $\int_{\B_\rho}\D\varphi\dx=0$):
\begin{equation}\label{EQ: AA-split-terms}\begin{aligned}
  \dashint_{\B_\rho} \D^2 f(\xi)\D v\cdot \D \varphi\dx
  &=\dashint_{\B_\rho} \D^2 f(\xi)\D v\cdot \D \varphi -\D f(\D u)\cdot\D \varphi +\D f(\xi)\cdot\D \varphi \dx\\
  &\qquad+\dashint_{\B_\rho} \D f(\D u)\cdot\D \varphi +\frac{1}{\sigma}\,\big[f(\D u-\sigma\D\varphi)-f(\D u)\big] \dx\\
  &\qquad+\frac{1}{\sigma}\dashint_{\B_\rho} f(\D u) -f(\D u-\sigma\D\varphi) \dx\\
  &\eqcol\mathrm{I}+\mathrm{II}+\mathrm{III}\,.
\end{aligned}\end{equation}
We proceed by estimating the term $\mathrm{III}$ via minimality. Indeed, since $u$ is a local minimizer of $\F$, we have $\F[u]\le\F[u-\sigma \varphi]$ and may then rearrange terms and divide by $\sigma\omega_n\rho^n$ to find
\begin{equation}\label{EQ: f-terms<g-terms}
    \mathrm{III}=\frac{1}{\sigma} \dashint_{\B_\rho} f(\D u)-f(\D u-\sigma \D\varphi)\dx \le \frac{1}{\sigma}\dashint_{\B_\rho}  g(x,u-\sigma \varphi)-g(x,u) \dx\,.
\end{equation}
Assumption \eqref{EQCOND: g-term estimate} on $g$ and the bound $\|\varphi\|_{\L^\infty(\B_\rho)}\le \rho$ then imply
\begin{equation}\label{EQ: Approx A-harmonicity q-estimate}\begin{aligned}
    \mathrm{III}
    \le \frac{1}{\sigma}\dashint_{\B_\rho}\zob (1+2|u|+\sigma\rho)^{q-\b}(\sigma\rho)^\b\dx
    &\le \frac{\c}{\sigma}\dashint_{\B_\rho}\zob (1+|v|)^{q-\b}(\sigma\rho)^\b\dx\\
    &\le \c\,\sigma^{\b-1}\bigg(\rho^\b\dashint_{\B_\rho} \zob \dx+\rho^\b\dashint_{\B_\rho} \zob |v|^\q \dx\bigg)\,.
\end{aligned}\end{equation}
In further estimating the terms on the right-hand side of \eqref{EQ: Approx A-harmonicity q-estimate}, on one hand we exploit that \eqref{EQCOND: Morrey-ahe} gives $\zob\in\L^{\sb,n+\sb(2-\b)\ahe-\sb\b}(\Omega)\subseteq\L^{1,n+(2-\b)\ahe-\b}(\Omega)$ and thus
\[
  \rho^\b\dashint_{\B_\rho}\zob\dx
  \le\c\,\rho^{(2-\b)\ahe}\,.
\]
On the other hand, we re-use the estimate \eqref{EQ: q-term estimate} in form
\[
  \rho^\b\dashint_{\B_\rho}\Gamma|v|^\q\dx
  \le\rho^\b\dashint_{\B_\rho}\bigg|\frac v\rho\bigg|^{2^\ast}\dx+\c\,\rho^{\b+2\fhe}
  \le\rho^{(1-\b)\ahe}\dashint_{\B_\rho}\bigg|\frac v\rho\bigg|^{2^\ast}\dx+\c\,\rho^{(2-\b)\ahe}\,,
\]
where in the second step we exploited $\ahe\le\frac\b{2-\b}$ in discarding (at this stage insignificant) factors $\rho^{\b-(1-\b)\ahe}\le1$ and $\rho^{\b+2\fhe-(2-\b)\ahe}\le1$. Collecting the estimates and inserting the choice $\sigma=\rho^\ahe$, we arrive at
\begin{equation}\label{EQ: est-g-diff}
  \mathrm{III}
  \le \c\,\sigma^{\b-1}\left(\rho^{(1-\b)\ahe}\dashint_{\B_\rho} \left|\frac{v}{\rho}\right|^{2^\ast}\dxxx+\rho^{(2-\b)\ahe}\right)\\
  = \c\left(\dashint_{\B_\rho} \left|\frac{v}{\rho}\right|^{2^\ast}\dxxx+\rho^\ahe\right).
\end{equation}
Next, we return to the term $\mathrm{I}$ from the right-hand side of \eqref{EQ: AA-split-terms}. We partially rewrite the term by integration, then in view of $|\xi|\le M$ use \eqref{EQ: modulus-of-continuity} on the set $U\coleq\{|\D v|<1\}$ and \eqref{EQCOND: growth condition for derivative} on its complement $U^\c$, and finally apply the Cauchy-Schwarz and Jensen inequalities. In this way we find
\[\begin{aligned}
    \mathrm{I}
    &= \frac{1}{|\B_\rho|}\int_{\B_\rho\cap U} \int_0^1\left[\D^2 f(\xi)-\D^2f(\xi+\tau\D v)\right]\D v \cdot \D \varphi\d \tau\dx\\
    &\qquad+\frac{1}{|\B_\rho|}\int_{\B_\rho\cap U^\c} \D^2 f(\xi)\D v\cdot \D \varphi-\D f(\D u)\cdot\D \varphi +\D f(\xi)\cdot\D \varphi\dx\\
    &\le \dashint_{\B_\rho}\sqrt{\widehat{\nu}_M(|\D v|^2)}\,|\D v |\dx+ \frac{1}{|\B_\rho|}\int_{\B_\rho\cap U^\c} \sdb_M |\D v|+\dgc(2+|\xi|+|\D u|)\dx\\
    &\le \sqrt{\dashint_{\B_\rho}\widehat{\nu}_M(|\D v|^2) \dx}\,\,\sqrt{\dashint_{\B_\rho} |\D v |^2 \dx}+ \c\dashint_{\B_\rho} |\D v|^2 \dx\\
    &\le \sqrt{\widehat{\nu}_M(\Phi)}\sqrt{ \Phi}+ \c\,\Phi\,.
\end{aligned}\]
Similarly, in order to control the term $\mathrm{II}$, we work on the sets $\widetilde U\coleq \{|\D u|<M+1\}$ and ${\widetilde U}^\c$ and deduce
\begin{equation}\label{EQ: est-Df-diff}\begin{aligned}
    \mathrm{II}&=\dashint_{\B_\rho}\dashint_0^\sigma [\D f(\D u)- \D f(\D u-t \D\varphi)]\cdot\D \varphi\d t\dx\\
    &=\frac{1}{|\B_\rho|}\int_{\B_\rho\cap\widetilde U}\dashint_0^\sigma\int_0^1 \D^2 f(\D u-\tau t\D\varphi)\,t\D\varphi\cdot\D \varphi\d\tau\d t\dx\\
    &\qquad+\frac{1}{|\B_\rho|}\int_{\B_\rho\cap{\widetilde U}^\c}\dashint_0^\sigma [\D f(\D u)- \D f(\D u-t \D\varphi)]\cdot\D \varphi\d t\dx\\
    &\le\sdb_M\dashint_{\B_\rho}\dashint_0^\sigma t|\D \varphi|^2\d t\dx+\frac{\dgc}{|\B_\rho|}\int_{\B_\rho\cap{\widetilde U}^\c}\dashint_0^\sigma 2+|\D u| + |\D u-t \D\varphi|\d t\dx\\
    &\le\sdb_M\sigma+\c\dashint_{\B_\rho} |\D v|^2\dx\\
    &=\sdb_M\rho^\ahe+\c\,\Phi\,.
\end{aligned}\end{equation}
The combination of the estimates for $\mathrm{I}$, $\mathrm{II}$, and $\mathrm{III}$ gives
\begin{equation*}
    \dashint_{\B_\rho(x_0)}\D^2 f(\xi)\D v\cdot\D\varphi\dx \le \c\left(\dashint_{\B_\rho(x_0)}\left|\frac{v}{\rho}\right|^{2^\ast }\dxxx+\sqrt{\widehat{\nu}_M(\Phi) \Phi}+\Phi+\rho^{\ahe}\right)\,.
\end{equation*}
The corresponding lower bound can be derived similarly: The term $\mathrm{I}$ in \eqref{EQ: AA-split-terms} is kept unchanged, the terms $\mathrm{II}$ and $\mathrm{III}$ in \eqref{EQ: AA-split-terms} are replaced by
\[
  \dashint_{\B_\rho} \D f(\D u)\cdot\D \varphi -\frac{1}{\sigma}\,\big[f(\D u+\sigma\D\varphi)-f(\D u)\big] \dx
  \qquad\text{and}\qquad
  \frac{1}{\sigma}\dashint_{\B_\rho} f(\D u+\sigma\D\varphi) -f(\D u) \dx\,,
\]
respectively, and all three single terms are now estimated from below instead of above.
\end{proof}

\section{Excess estimates}\label{sec:excess}

We have now collected the main auxiliary results in order to derive estimates for the quadratic excess, defined by
\[
  \Phi(x_0,\rho)\coleq\dashint_{\B_\rho(x_0)}|\D u-(\D u)_{x_0,\rho}|^2 \dx\qquad \text{and}\qquad\Phi(\rho)\coleq \Phi(0,\rho)\,.
\]
Before turning to these estimates, however, we put on record one basic lemma on properties of the excess (which essentially follows from the observation that the quadratic function $\xi\mapsto \dashint_{\B_\rho(x_0)}|\D u-\xi|^2\dx$ has its minimum point at $(\D u)_{x_0,\rho}$).

\begin{lemma}\label{LEM: average estimate}
    For all $r\in (0,\rho]$ and all $\xi\in\R^{N\times n}$, we have the inequalities
    \begin{equation*}
        \Phi(x_0,\rho)\le \dashint_{\B_\rho(x_0)} |\D u-\xi|^2 \dx\qquad \text{and} \qquad \Phi(x_0,r)\le  \left(\frac{\rho}{r}\right)^n\Phi(x_0,\rho)\,.
    \end{equation*}
\end{lemma}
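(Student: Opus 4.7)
The plan is to prove both inequalities by direct computation, exploiting the well-known fact that the mean value minimizes the $\L^2$ distance among constants.

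For the first inequality, I would expand the squared norm as
\[
  \dashint_{\B_\rho(x_0)}|\D u-\xi|^2\dx
  =\dashint_{\B_\rho(x_0)}|\D u|^2\dx - 2\xi\cdot(\D u)_{x_0,\rho}+|\xi|^2
  =\dashint_{\B_\rho(x_0)}|\D u|^2\dx - |(\D u)_{x_0,\rho}|^2+|\xi-(\D u)_{x_0,\rho}|^2\,,
\]
which shows that this quadratic function of $\xi\in\R^{N\times n}$ attains its minimum at $\xi=(\D u)_{x_0,\rho}$, and the minimum value equals precisely $\dashint_{\B_\rho(x_0)}|\D u-(\D u)_{x_0,\rho}|^2\dx=\Phi(x_0,\rho)$. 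This yields the first claim.

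For the second inequality, I would apply the first inequality on the smaller ball $\B_r(x_0)\subseteq \B_\rho(x_0)$ with the choice $\xi=(\D u)_{x_0,\rho}$, so that
\[
  \Phi(x_0,r)
  \le\dashint_{\B_r(x_0)}|\D u-(\D u)_{x_0,\rho}|^2\dx
  =\frac{1}{|\B_r|}\int_{\B_r(x_0)}|\D u-(\D u)_{x_0,\rho}|^2\dx\,.
\]
Enlarging the domain of integration from $\B_r(x_0)$ to $\B_\rho(x_0)$ (the integrand is non-negative) and using $|\B_\rho|/|\B_r|=(\rho/r)^n$, I arrive at
\[
  \Phi(x_0,r)
  \le\frac{|\B_\rho|}{|\B_r|}\,\dashint_{\B_\rho(x_0)}|\D u-(\D u)_{x_0,\rho}|^2\dx
  =\left(\frac{\rho}{r}\right)^n\Phi(x_0,\rho)\,,
\]
which completes the proof.

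There is no real obstacle here: both statements are elementary consequences of the $\L^2$-minimality of the mean value and of the monotonicity of integrals of non-negative functions under domain enlargement.
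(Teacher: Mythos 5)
Your proof is correct and follows exactly the route the paper indicates: the first inequality via the $\L^2$-minimality of the mean value (completion of the square), and the second by applying the first on the smaller ball with $\xi=(\D u)_{x_0,\rho}$ and enlarging the integration domain. Nothing to add.
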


\begin{lemma}[excess improvement]\label{LEM: before excess estmate}
    We consider Setting \ref{Setting: Quasiconvex f}. For every $\kappa\in{\big(\minhe,1\big)}$ and every $M>0$, there exist constants $\theta\in{(0,1)}$, $\varepsilon>0$, $\c>0$ such that 
    \begin{equation}\label{EQ: Smallness assumption}
        \rho+\Phi(x_0,\rho)\le \varepsilon\qquad \text{and} \qquad |(\D u)_{x_0,\rho}|+|(u)_{x_0,\rho}|\le M
    \end{equation}
    together imply
    \begin{equation*}
        \Phi(x_0,\theta\rho)\le \theta^{2\kappa}\Phi(x_0,\rho)+\c\rho^{2\minhe}.
    \end{equation*}
    The constants $\c$ and $\theta$ all depend only on $n,\,N,\,\b,\,\q,\,\Q_M,\,\zob_1,\,\zob_2,\,\gc,\,M,\,\sdb_{n,N,\Q_M,\gc,M},\,\kappa$, and $\varepsilon$ additionally depends on $\ahe$ and $\nu_M$.
\end{lemma}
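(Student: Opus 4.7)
The plan is to follow the $A$-harmonic approximation scheme. Without loss of generality set $x_0 = 0$, and take $\xi \coleq (\D u)_{0,\rho}$, $\zeta \coleq (u)_{0,\rho}$, $v \coleq u_{\xi,\zeta}$; then $\Phi(\rho) = \dashint_{\B_\rho}|\D v|^2\,\dx$ and both $v$ and $\D v$ have vanishing integral mean on $\B_\rho$. Set $A \coleq \D^2 f(\xi)$: by Lemmas \ref{LEM: Growth on derivative of f} and \ref{LEM: LH condition}, $|A| \leq \sdb_M$ and $A$ satisfies the Legendre-Hadamard condition with constant $2\Q_M$, so $A$ is admissible for Lemma \ref{LEM: Harmonic Approx}.

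Introduce a scale $\Psi^2 \coleq \Phi(\rho) + K\rho^{2\minhe}$ with $K \geq 1$ to be chosen large. Sobolev-Poincaré gives $\dashint_{\B_\rho}|v/\rho|^{2^\ast}\,\dx \leq c\,\Phi^{2^\ast/2}$, and combined in the right-hand side of Lemma \ref{LEM: approximate harmonicity} with the bounds $\sqrt{\Phi} \leq \Psi$, $\rho^\ahe \leq \rho^\minhe \leq \Psi/\sqrt{K}$, the smallness of $\rho + \Phi$, and $\widehat\nu_M(0) = 0$ from Remark \ref{REM: Concave is not restrictive}, it yields
\[
\left|\dashint_{\B_\rho} \D^2 f(\xi)\D v \cdot \D\varphi\,\dx\right| \leq \delta\,\Psi\,\|\D\varphi\|_{\L^\infty(\B_\rho)}
\]
for arbitrarily small $\delta > 0$, provided $K$ is chosen large enough (to absorb the mismatched $\rho^\ahe$ term) and $\varepsilon$ small enough. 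Lemma \ref{LEM: Harmonic Approx} then produces an $A$-harmonic $h \in \C^\infty(\B_\rho,\R^N)$ with $\|\D h\|_{\C(\B_{\rho/2})} + \rho\|\D^2 h\|_{\C(\B_{\rho/2})} \leq c$ and $\dashint_{\B_{\rho/2}}|(v - \Psi h)/\rho|^2\,\dx \leq \Psi^2 \tau$ for any prescribed tolerance $\tau > 0$.

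Next I re-center: let $\tilde\xi \coleq \xi + \Psi \D h(0)$, $\tilde\zeta \coleq \zeta + \Psi h(0)$, and $\tilde v \coleq u - \tilde\zeta - \tilde\xi\,x = v - \Psi\bigl(h(0) + \D h(0)\,x\bigr)$. Poincaré on $v$ together with the $\L^2$-closeness of $\Psi h$ to $v$ on $\B_{\rho/2}$ give $|\Psi h(0)| \leq c\rho\Psi$ and $|\Psi\D h(0)| \leq c\Psi$, hence $|\tilde\xi| + |\tilde\zeta| \leq M + 1$ for $\varepsilon$ small, and a Taylor expansion of $h$ using the $\D^2 h$-bound gives $|\tilde v - (v - \Psi h)| \leq c\Psi|x|^2/\rho$ on $\B_{\rho/2}$. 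Fix $\theta \in {(0,1/4]}$. Applying Lemma \ref{LEM: Caccioppoli inequality} to $\tilde v$ at radius $2\theta\rho$ (with bound $M+1$ and the inner parameter set to $1$) and using Lemma \ref{LEM: average estimate}, I estimate the resulting $\L^2$-term by $c\,\theta^{-n-2}\Psi^2 \tau + c\,\theta^2\Psi^2$ (via Taylor and the harmonic-approximation closeness) and the $\L^{2^\ast}$-term by $c\,\theta^{-n-2^\ast}\Psi^{2^\ast}$ (via Sobolev-Poincaré on $v$ on $\B_\rho$ and the pointwise control $|\Psi(h(0)+\D h(0) x)| \leq c\rho\Psi$ on $\B_{2\theta\rho}$). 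Substituting $\Psi^2 \leq \Phi + K\rho^{2\minhe}$ yields
\[
\Phi(\theta\rho) \leq \bigl(c\,\theta^2 + c\,\theta^{-n-2}\tau + c\,\theta^{-n-2^\ast}\Psi^{2^\ast-2}\bigr)\,\Phi + c_K\,\rho^{2\minhe}.
\]
The claim follows by the nested choice of parameters: first $\theta$ small so that $c\,\theta^2 \leq \tfrac13\theta^{2\kappa}$ (possible since $\kappa < 1$); then $\tau$ (which fixes $\delta$ in Lemma \ref{LEM: Harmonic Approx} and thereby $K$) so that $c\,\theta^{-n-2}\tau \leq \tfrac13\theta^{2\kappa}$; finally $\varepsilon$ small enough so that also $c\,\theta^{-n-2^\ast}\Psi^{2^\ast - 2} \leq \tfrac13\theta^{2\kappa}$ and every earlier smallness requirement is met.

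The main obstacle is this delicate accounting of parameters rather than any single estimate. Two features are worth stressing: the $\L^{2^\ast}$ term in the Caccioppoli inequality of Lemma \ref{LEM: Caccioppoli inequality} carries no free inner $\varepsilon'$-factor, so its smallness has to be drawn from $\Psi^{2^\ast - 2}$ and is therefore controlled only by the final choice of $\varepsilon$; and the $\rho^\ahe$ error in the approximate $A$-harmonicity does not directly match the desired decay rate $\rho^\minhe$, which is precisely why $\Psi^2$ is augmented by the large multiple $K\rho^{2\minhe}$. Tracing these dependencies produces exactly the structure asserted in the statement: $\theta$ and $c$ are independent of $\ahe$ and $\nu_M$, whereas $\varepsilon$ inherits an additional dependence on $\ahe$ (via $\minhe$ in the $\Psi^{2^\ast-2}$ smallness) and on $\nu_M$ (via $\widehat\nu_M$).
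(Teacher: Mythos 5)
Your strategy is the right one — $A$-harmonic approximation plus a re-centered Caccioppoli inequality, with a three-term coefficient of $\Phi$ being absorbed by nested parameter choices — and your variant of $\Psi$ (augmenting by $K\rho^{2\minhe}$ with $K$ large, rather than the paper's $\Psi=\sqrt\Phi+\frac{2\c_1}{\delta_\mathrm{HA}}\rho^\ahe$) is a perfectly valid cosmetic change, as is the dependency accounting at the end. However, there is a genuine gap in the Caccioppoli step: you set the free parameter $\varepsilon_\mathrm{CI}$ (your ``inner parameter'') equal to $1$, and then your intermediate inequality
\[
\Phi(\theta\rho)\le\bigl(\c\theta^2+\c\theta^{-n-2}\tau+\c\theta^{-n-2^\ast}\Psi^{2^\ast-2}\bigr)\Phi+\c_K\rho^{2\minhe}
\]
silently omits the contribution of the term $\varepsilon_\mathrm{CI}\bigl(\dashint_{\B_{2\theta\rho}}|\tilde v/(2\theta\rho)|^{2^\ast}\dxx\bigr)^{2/2^\ast}$. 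With $\varepsilon_\mathrm{CI}=1$ and your own $\L^{2^\ast}$-bound $\dashint_{\B_{2\theta\rho}}|\tilde v/(2\theta\rho)|^{2^\ast}\dxx\le\c\theta^{-n-2^\ast}\Psi^{2^\ast}$, this term contributes $\c\bigl(\theta^{-n-2^\ast}\Psi^{2^\ast}\bigr)^{2/2^\ast}=\c\theta^{-n}\Psi^2\ge\c\theta^{-n}\Phi$ (using $(n+2^\ast)\tfrac{2}{2^\ast}=n$), and the coefficient $\c\theta^{-n}$ is independent of $\tau$, $K$, $\Psi$, and $\varepsilon$ and blows up as $\theta\searrow0$, so it cannot be pushed below $\tfrac13\theta^{2\kappa}$ by any of your remaining choices. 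This is precisely why the paper keeps $\varepsilon_\mathrm{CI}$ as a free fourth parameter in the nested choice: the $\varepsilon_\mathrm{CI}$-weighted $\L^{2^\ast}$-term contributes $\varepsilon_\mathrm{CI}\theta^{-n}\Phi$ (plus an $\varepsilon_\mathrm{CI}\theta^{-2}\Psi^2$ piece from the affine shift), and after fixing $\theta$ one chooses $\varepsilon_\mathrm{CI}$ small depending on $\theta$, which then determines the final constant in front of $\rho^{2\minhe}$ via the $\varepsilon_\mathrm{CI}^{-\b/(2-\b)}$ factor. Your remark that ``the $\L^{2^\ast}$ term in the Caccioppoli inequality carries no free inner $\varepsilon'$-factor'' is true only of the second $\L^{2^\ast}$-term; the first one does carry $\varepsilon_\mathrm{CI}$, and exploiting it is essential.
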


\begin{proof}
  We assume $x_0=0$ and $\Phi\coleq\Phi(\rho)>0$ and abbreviate
  \[
    \xi\coleq (\D u)_{0,\rho}\,,\qquad
    \zeta\coleq (u)_{0,\rho}\,,\qquad
    v(x)\coleq u_{\xi,\zeta}(x)=  u(x)-\zeta-\xi x\,,\qquad
    A\coleq\D^2f(\xi)\,.
  \]
  Our first aim is now applying the $A$-harmonic approximation lemma with this choice of $A$. To this end we first deduce from the quasiconvexity assumption on $f$, Lemma \ref{LEM: LH condition}, and the bound $|\xi|\le M$  that the conditions \eqref{EQCOND: lower bound Harmonic}, \eqref{EQCOND: upper bound Harmonic} are satisfied. Therefore, Lemma \ref{LEM: approximate harmonicity} guarantees
  \begin{equation*}
    \left|\dashint_{\B_\rho}A(\D v,\D\varphi)\dx \right|\le \c_1\left(\dashint_{\B_\rho}\left|\frac{v}{\rho}\right|^{2^\ast }\dxxx+\sqrt{\widehat{\nu}_M(\Phi)}\sqrt{\Phi} +\Phi+\rho^\ahe\right)\|\D \varphi\|_{\L^\infty(\B_\rho)}
  \end{equation*}
  for all $\varphi\in \W^{1,\infty}_0(\B_\rho,\R^N)$. Taking into account $(v)_{0,\rho}=0$ and the Sobolev-Poincar\'e inequality, we also get the estimate in form
  \begin{equation*}
    \left|\dashint_{\B_\rho}A(\D v,\D\varphi)\dx \right|\le \c_1\big(\Phi^\frac{2^\ast}{2}+\sqrt{\widehat{\nu}_M(\Phi)}\sqrt{\Phi} +\Phi+\rho^\ahe\big)\|\D \varphi\|_{\L^\infty(\B_\rho)}.
  \end{equation*}
  Now, for $\varepsilon_\mathrm{HA}\in{(0,1]}$ to be fixed below, we consider the corresponding $\delta_\mathrm{HA}=\delta_\mathrm{HA}(\varepsilon_\mathrm{HA},n,N,\Q_M,\gc)>0$ of Lemma \ref{LEM: Harmonic Approx}, and we set
  \[
    \Psi\coleq  \sqrt{\Phi}+\frac{2\c_1}{\delta_\mathrm{HA}}\rho^{\ahe}\,.
  \]
  Then we have
  \begin{equation*}
    \left|\dashint_{\B_\rho} A(\D v,\D \varphi)\dx\right|\le \Big(\c_1\big(\sqrt{\widehat{\nu}_M(\Phi)}+\sqrt{\Phi}+\Phi^\frac{2^\ast -1}{2}\big)+\frac{1}{2}\delta_\mathrm{HA} \Big)\Psi\,\|\D \varphi\|_{\L^\infty(\B_\rho)}
  \end{equation*}
  for all $\varphi\in \W^{1,\infty}_0(\B_\rho,\R^N)$. Since we can choose $\varepsilon\in(0,1)$ (which also depends on the later choice of $\varepsilon_\mathrm{HA}$) small enough for deducing from \eqref{EQ: Smallness assumption} that $\c_1 \big(\sqrt{\widehat{\nu}_M(\Phi)}+\sqrt{\Phi}+\Phi^\frac{2^\ast -1}{2}\big)\le \frac{1}{2}\delta_\mathrm{HA}$ and $\Psi\le 1$, we can ensure
  \begin{equation*}
    \left|\dashint_{\B_\rho} A(\D v,\D \varphi)\dx\right|\le \delta_\mathrm{HA}\Psi\,\|\D \varphi\|_{\L^\infty(\B_\rho)}
  \end{equation*}
  for all $\varphi\in \W^{1,\infty}_0(\B_\rho,\R^N)$ in order to then apply Lemma \ref{LEM: Harmonic Approx}. Thus, there exists an $A$-harmonic function $h\in\C^\infty(\B_\rho,\R^N)$ and a constant $\c_2>0$ such that we have
  \begin{equation}\label{EQ: Bound on the A-harmonic derivatives}
    \|\D h\|_{\C(\B_\frac{\rho}{2})} +\rho \|\D^2 h\|_{\C(\B_\frac{\rho}{2})}\le \c_2
  \end{equation}
  and 
  \begin{equation}\label{EQ: Th-L2-close-to-v}
    \dashint_{\B_{\frac{\rho}{2}}} \left|\frac{v-\Psi h}{\rho}\right|^2 \dx\le \varepsilon_\mathrm{HA}\Psi ^2\,.
  \end{equation}  
  In order to take proper advantage of \eqref{EQ: Bound on the A-harmonic derivatives} and \eqref{EQ: Th-L2-close-to-v} we first observe that \eqref{EQ: Bound on the A-harmonic derivatives} and Taylor expansion yield the bound
  \[
    \|h(x)-h(0)-\D h(0) x\|_{\C(\B_{2\theta\rho}(0))} \le 4\c_2\theta^2\rho
  \]
  for $\theta\in{\big(0,\frac14\big]}$ to be determined at the end of the proof.
  In the sequel we abbreviate
  \[
    \tilde\xi\coleq \xi+\Psi\,\D h(0)
    \qquad\text{and}\qquad
    \tilde\zeta\coleq  \zeta + \Psi\,h(0)\,,
  \]
  and we now fix $\varepsilon_\mathrm{HA}\coleq \theta^{n+4}$. Then, from \eqref{EQ: Th-L2-close-to-v} and the previous bound we obtain
  \begin{equation}\label{EQ: tilde-v-L2-small}\begin{aligned}
    \dashint_{\B_{2\theta\rho}} \bigg|\frac{u_{\tilde\xi,\tilde\zeta}}{\theta \rho}\bigg|^2 \dxx
    &\le \frac{2}{4^n\theta^{n+2}} \dashint_{\B_\frac{\rho}{2}} \left|\frac{v-\Psi h}{\rho}\right|^2 \dxx + 2\Psi ^2\sup_{x\in \B_{2\theta\rho}}\left|\frac{h(x)-h(0)-\D h(0) x}{2\theta\rho}\right|^2\\
    &\le \c \left(\frac{\varepsilon_\mathrm{HA}}{\theta^{n+2}}+\theta^2\right)\Psi ^2\\
    &\le \c \theta^2\Psi^2\,.
  \end{aligned}\end{equation}
  Furthermore, we claim that we have the auxiliary estimates
  \begin{equation}\label{EQ: tilde-xi/zeta-estimates}
    |\tilde\xi-\xi|=\Psi\,|\D h(0)|\le\c\,\Psi
    \qquad\text{and}\qquad
    |\tilde\zeta-\zeta|=\Psi\,|h(0)|\le\c\,\Psi\rho\,.
  \end{equation}
  Indeed, the first estimate in \eqref{EQ: tilde-xi/zeta-estimates} is evident from \eqref{EQ: Bound on the A-harmonic derivatives}, while the second one is now derived as follows.
  We first observe that \eqref{EQ: Bound on the A-harmonic derivatives} implies
  $|h-h(0)|\le\c\,\rho$ on $\B_{\frac{\rho}{2}}$. Then, via \eqref{EQ: Th-L2-close-to-v}, $\varepsilon_\mathrm{HA}<1$, and the Poincar\'e inequality, we conclude
  \[\begin{aligned}
    \Psi^2|h(0)|^2
    &\le\c\bigg(\dashint_{\B_\frac{\rho}{2}}|\Psi h|^2\dx+\Psi^2\rho^2\bigg)\\
    &\le\c\bigg(\dashint_{\B_\frac{\rho}{2}}|v-\Psi^2h|^2\dx+\dashint_{\B_\frac{\rho}{2}}|v|^2\dx+\Psi^2\rho^2\bigg)
    \le\c (\varepsilon_\mathrm{HA}\Psi^2\rho^2+\rho^2\Phi+\Psi^2\rho^2)
    \le\c\,\Psi^2\rho^2\,.
  \end{aligned}\]
  This completes the verification of \eqref{EQ: tilde-xi/zeta-estimates}
  and in particular ensures $|\tilde\xi|+|\tilde\zeta|\le\c$. At this stage, we are ready for the main estimations, which draw on Lemma \ref{LEM: average estimate}, on the Caccioppoli inequality of Lemma \ref{LEM: Caccioppoli inequality} with arbitrary $\varepsilon_\mathrm{CI}\in{(0,1]}$ (to be determined at the end of this proof), on the Sobolev-Poincar\'e inequality, and on the estimates \eqref{EQ: tilde-v-L2-small} and \eqref{EQ: tilde-xi/zeta-estimates}. In fact, we find
  \[\begin{aligned}
    \Phi(\theta\rho)&\le \dashint_{\B_{\theta \rho}}|\D u-\Tilde{\xi}|^2 \dx\\
    &\le \c\Bigg( \dashint_{\B_{2\theta\rho}}\bigg|\frac{u_{\tilde\xi,\tilde\zeta}}{\theta \rho}\bigg|^2 \dxx
    +\varepsilon_\mathrm{CI}\Bigg(\dashint_{\B_{2\theta\rho}}\bigg|\frac{u_{\tilde\xi,\tilde\zeta}}{\theta \rho}\bigg|^{2^\ast}\dxxx\Bigg)^\frac{2}{2^\ast}
    +\dashint_{\B_{2\theta\rho}}\bigg|\frac{u_{\tilde\xi,\tilde\zeta}}{\theta \rho}\bigg|^{2^\ast}\dxxx
    +\varepsilon_\mathrm{CI}^{-\frac\b{2-\b}}(\theta\rho)^{2\minhe}\Bigg)\\
    &\le \c\Bigg(\dashint_{\B_{2\theta\rho}}\bigg|\frac{u_{\tilde\xi,\tilde\zeta}}{\theta \rho}\bigg|^2 \dxx+\varepsilon_\mathrm{CI}\Bigg(\theta^{-n-2^\ast}\dashint_{\B_\rho}\bigg|\frac{v}{\rho}\bigg|^{2^\ast}\dxxx\Bigg)^{\frac{2}{2^\ast}}+\varepsilon_\mathrm{CI}\bigg(\frac{|\tilde\zeta-\zeta|}{\theta\rho}+|\tilde\xi-\xi|\bigg)^2\\
    &\qquad+\theta^{-n-2^\ast}\dashint_{\B_\rho}\bigg|\frac{v}{\rho}\bigg|^{2^\ast}\dxxx+\bigg(\frac{|\Tilde{\zeta}-\zeta|}{\theta\rho}+|\tilde{\xi}-\xi|\bigg)^{2^\ast}+\varepsilon_\mathrm{CI}^{-\frac\b{2-\b}}(\theta\rho)^{2\minhe}\Bigg)\\
    &\le \c\Bigg(\theta^2\Psi^2+\varepsilon_\mathrm{CI}\theta^{-\frac2{2^\ast}n-2}\Phi+\varepsilon_\mathrm{CI}\theta^{-2}\Psi^2
    +\theta^{-n-2^\ast}\Phi^\frac{2^\ast }{2}+\theta^{-2^\ast}\Psi^{2^\ast}+\varepsilon_\mathrm{CI}^{-\frac\b{2-\b}}(\theta\rho)^{2\minhe}\bigg)\,.
  \end{aligned}\]
  By using $\Phi\le\Psi^2$ and by reducing to the worst powers of $\theta$, we simplify the result of these estimations and arrive at
  \[
    \Phi(\theta\rho)
    \le\c_{3}\big[\big(\theta^2+\varepsilon_\mathrm{CI}\theta^{-\frac2{2^\ast}n-2}+\Psi^{2^\ast-2}\theta^{-n-2^\ast}\big)\Psi^2+\varepsilon_\mathrm{CI}^{-\frac\b{2-\b}}(\theta\rho)^{2\minhe}\big]\,.
  \]
  We now finalize the proof by determining the remaining parameters with the dependencies indicated in the statement. Taking into account $\kappa<1$, we first take $\theta$ small enough for having $\c_{3}\theta^2\le\frac16\theta^{2\kappa}$. Then we make $\varepsilon_\mathrm{CI}$ (which depends on the same parameters as $\theta$) small enough to ensure $\c_{3}\varepsilon_\mathrm{CI}\theta^{-\frac2{2^\ast}n-2}\le\frac16\theta^{2\kappa}$, and we decrease $\varepsilon$ such that, in addition to the earlier smallness requirement, \eqref{EQ: Smallness assumption} implies also $\c_{3}\Psi^{2^\ast-2}\theta^{-n-2^\ast}\le\frac16 \theta^{2\kappa}$. Altogether, we finally end up with
  \[
    \Phi(\theta\rho)\le\frac12\theta^{2\kappa}\Psi^2+\c_{3}\varepsilon_\mathrm{CI}^{-\frac\b{2-\b}}(\theta\rho)^{2\minhe}\le \theta^{2\kappa}\Phi+\c\rho^{2\minhe}\,,
  \]
  where we exploited that $\frac12\Psi^2\le\Phi+\frac{2\c_1}{\delta_\mathrm{HA}}\rho^{2\ahe}\le\Phi+\frac{2\c_1}{\delta_\mathrm{HA}}\rho^{2\minhe}$ by choice of $\Psi$. The proof is complete.
\end{proof}

\begin{lemma}[excess decay]\label{LEM: Excess extimate} We consider Setting \ref{Setting: Quasiconvex f}. For every $\kappa\in{\big(\minhe,1\big)}$ and every $M>0$, there exist constants $\varepsilon\in(0,1]$ and $\c>0$ such that
\begin{equation}\label{EQ: excess estimate smallness assumption}
    \rho+\Phi(x_0,\rho) \le \varepsilon\qquad \text{and} \qquad |(\D u)_{x_0,\rho}|+|(u)_{x_0,\rho}|\le M
\end{equation}
together imply
\begin{equation*}
    \Phi(x_0,r)\le \c \bigg(\bigg(\frac{r}{\rho}\bigg)^{2\kappa}\Phi(x_0,\rho)+r^{2\minhe} \bigg)
    \qquad\text{for all }r\in{(0,\rho]}\,.
\end{equation*}
Moreover, both $\varepsilon$ and $\c$ only depend on the parameters $n$, $N$, $\b$, $\q$, $\Q_M$, $\zob_1$, $\zob_2$, $\gc$, $M$, $\kappa$, and $\sdb_{n,N,\Q_M,\gc,M}$, and $\varepsilon$ additionally depends on $\ahe$ and $\nu_M$.  
\end{lemma}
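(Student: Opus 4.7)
The proof follows the standard iteration scheme for excess decay in partial regularity theory, building on the one-step improvement provided by Lemma \ref{LEM: before excess estmate}. My plan is to iterate that improvement at dyadic scales $\rho_k\coleq\theta^k\rho$, collect the resulting geometric decay for $\Phi_k\coleq\Phi(x_0,\rho_k)$, and finally transfer the conclusion from dyadic to general radii via Lemma \ref{LEM: average estimate}. In order to keep bounds on the mean values uniform throughout the iteration, I would apply Lemma \ref{LEM: before excess estmate} not with $M$ directly but with $\tilde M\coleq M+1$, fixing corresponding constants $\theta\in{(0,1)}$, $\varepsilon_0\in{(0,1]}$, $\c_0>0$ with the dependencies stated there.

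I would then prove by strong induction on $k\ge0$ that, for $\varepsilon\in{(0,\varepsilon_0]}$ to be chosen sufficiently small, the smallness condition
\[
  \rho_k+\Phi_k\le\varepsilon_0\qquad\text{and}\qquad |\xi_k|+|\zeta_k|\le\tilde M
\]
holds, where $\xi_k\coleq(\D u)_{x_0,\rho_k}$ and $\zeta_k\coleq(u)_{x_0,\rho_k}$. Under this hypothesis at level $k$, Lemma \ref{LEM: before excess estmate} yields $\Phi_{k+1}\le\theta^{2\kappa}\Phi_k+\c_0\rho_k^{2\minhe}$, and straightforward unrolling of this recursion (whose inhomogeneous tail is a geometric convolution, converging because of $\kappa>\minhe$) gives the key decay
\[
  \Phi_k\le\theta^{2k\kappa}\Phi_0+\c\,\rho_k^{2\minhe}
\]
with $\c$ depending only on $\c_0$, $\theta$, $\kappa$, $\minhe$. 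This provides the smallness $\rho_{k+1}+\Phi_{k+1}\le\varepsilon_0$ automatically once $\varepsilon$ is chosen small enough.

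The main obstacle is closing the induction on the mean value bound $|\xi_{k+1}|+|\zeta_{k+1}|\le\tilde M$. Assuming without loss of generality $x_0=0$, so that $u_{\xi_k,\zeta_k}$ has vanishing mean on $\B_{\rho_k}$, I would estimate the incremental changes by Jensen combined with Lemma \ref{LEM: average estimate} and the Poincaré inequality:
\[
  |\xi_{k+1}-\xi_k|^2\le\theta^{-n}\Phi_k
  \qquad\text{and}\qquad
  |\zeta_{k+1}-\zeta_k|^2\le\theta^{-n}\dashint_{\B_{\rho_k}}|u_{\xi_k,\zeta_k}|^2\dx\le\c\,\rho_k^2\Phi_k\,.
\]
Summing these telescoping bounds and inserting the geometric decay of $\Phi_k$ would give
\[
  |\xi_k-\xi_0|+|\zeta_k-\zeta_0|\le\c\big(\sqrt{\Phi_0}+\rho^{\minhe}\big)\,,
\]
which can be forced below $1$ by choosing $\varepsilon$ small enough relative to $\c$. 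Together with $|\xi_0|+|\zeta_0|\le M$, this secures $|\xi_k|+|\zeta_k|\le M+1=\tilde M$ and closes the induction.

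Finally, for arbitrary $r\in{(0,\rho]}$ I would select $k\ge0$ with $\rho_{k+1}<r\le\rho_k$ and apply Lemma \ref{LEM: average estimate} once more to find
\[
  \Phi(x_0,r)\le(\rho_k/r)^n\Phi_k\le\theta^{-n}\big(\theta^{2k\kappa}\Phi_0+\c\,\rho_k^{2\minhe}\big)\le\c\bigg(\Big(\frac{r}{\rho}\Big)^{2\kappa}\Phi(x_0,\rho)+r^{2\minhe}\bigg)\,,
\]
which is the required decay. All constants depend only on the parameters listed in the statement, since $\tilde M=M+1$ and the one-step constants $\theta$, $\varepsilon_0$, $\c_0$ depend only on those.
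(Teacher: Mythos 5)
Your proof is correct and takes essentially the same approach as the paper: a strong induction at dyadic scales $\theta^k\rho$ that simultaneously propagates the geometric excess decay, the smallness needed to re-apply Lemma \ref{LEM: before excess estmate}, and the mean-value bounds (for $\D u$ via Jensen, for $u$ via Poincar\'e), followed by a transfer to general radii via Lemma \ref{LEM: average estimate}. The only cosmetic difference is your choice of $\tilde M=M+1$ (with the telescoping increments forced below $1$), whereas the paper works with $2M$; this is in fact a slightly cleaner way to keep the threshold that Lemma \ref{LEM: before excess estmate} requires under control.
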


\begin{proof}
  We assume $x_0=0$ and for the moment abbreviate
  \[
    \gamma\coleq\minhe\,.
  \]
  We denote by $\theta$, $\tilde\varepsilon$, and $\tilde\c$ the constants from Lemma \ref{LEM: before excess estmate} (for the given $\kappa$ and $4M$ instead of $M$). Moreover, we fix a new quantity $\varepsilon>0$ small enough that \eqref{EQ: excess estimate smallness assumption} implies
  \begin{equation}\label{EQ: smallness assumption}
    \rho+\C\rho^{2\gamma}+\Phi(\rho)\le \Tilde{\varepsilon},\quad \frac{\theta^{-\frac{n}{2}}}{1-\theta^\kappa}\sqrt{\Phi(\rho)}+\frac{\theta^{-\frac{n}{2}}}{1-\theta^\gamma}\sqrt{\C}\rho^\gamma\le M\quad \text{and}\quad \frac{\c_1\theta^{-n}}{1-\theta}\rho\left(\sqrt{\tilde{\varepsilon}}+2M\right)\le M
  \end{equation}
  with $\C\coleq  \frac{\tilde{\c}}{\theta^{2\gamma}-\theta^{2\kappa}}$ and the constant $\c_1=\c_1(n)$ of the Poincar\'e inequality relevant below. We claim that, for all integers $i\ge0$, there hold
  \begin{align}
    \Phi(\theta^i\rho)&\le \theta^{2\kappa i}\Phi(\rho)+\C(\theta^i\rho)^{2\gamma}\,,  \label{EQ: induction property 1}\\
    \theta^i\rho+\Phi(\theta^i\rho)&\le \tilde{\varepsilon}\,,         \label{EQ: induction property 2}\\
    |(\D u)_{\theta^i\rho}|&\le 2M\,,    \label{EQ: Derivative Average Estimate}\\
    |(u)_{\theta^i\rho}|&\le 2M\,.       \label{EQ: Average Estimate}
  \end{align}
  In fact, our main aim is proving \eqref{EQ: induction property 1}, but it will be convenient to establish the above set of inequalities by the following induction argument.
  
  \smallskip
  
  For $i=0$, all claims follow readily from \eqref{EQ: excess estimate smallness assumption} and the preceding choice of $\varepsilon$. So, we assume \eqref{EQ: induction property 1}--\eqref{EQ: Average Estimate} for $i=0,1,2,\ldots,k$ and deduce the validity of these claims for $i=k+1$ as well. To this end, since we have \eqref{EQ: induction property 2}--\eqref{EQ: Average Estimate} for $i=k$, we may use Lemma \ref{LEM: before excess estmate} with $\theta^k\rho$ instead of $\rho$. We find
  \begin{equation*}
    \Phi(\theta^{k+1}\rho) \le \theta^{2\kappa}\Phi(\theta^k\rho)+\tilde{\c} (\theta^k\rho)^{2\gamma}\,.
  \end{equation*}
  Then, taking into account \eqref{EQ: induction property 1} for $i=k$, we conclude
  \begin{align*}
    \Phi(\theta^{k+1}\rho)& \le \theta^{2\kappa(k+1)}\Phi(\rho)+\C (\theta^k\rho)^{2\gamma}\theta^{2\kappa}+\tilde{\c}(\theta^k\rho)^{2\gamma}\\
    & \le \theta^{2\kappa(k+1)}\Phi(\rho)+\tilde{\c} (\theta^k\rho)^{2\gamma}\left(\frac{\theta^{2\kappa}}{\theta^{2\gamma}-\theta^{2\kappa}}+1\right)\\
    &=\theta^{2\kappa(k+1)}\Phi(\rho)+\C (\theta^{k+1}\rho)^{2\gamma}\,.
  \end{align*}
  Thus, we arrive at \eqref{EQ: induction property 1} for $i=k+1$, and this implies \eqref{EQ: induction property 2} for $i=k+1$ via $\theta<1$ and the smallness assumption \eqref{EQ: smallness assumption}. Next we turn to \eqref{EQ: Derivative Average Estimate}. By the Cauchy-Schwarz inequality and \eqref{EQ: induction property 1} for $i=0,1,2,\ldots,k$, we estimate
  \[\begin{aligned}
    \left|(\D u)_{\theta^{k+1}\rho}-(\D u)_\rho\right| &\le \sum_{i=0}^k  \left|(\D u)_{\theta^{i+1}\rho}-(\D u)_{\theta^i\rho}\right|\\
    &\le \sum_{i=0}^k \left(\dashint_{\B_{\theta^{i+1}\rho}} |\D u-(\D u)_{\theta^i\rho}|^2 \dx\right)^\frac{1}{2}\\
    &\le \theta^{-\frac{n}{2}}\sum_{i=0}^k \sqrt{\Phi(\theta^i\rho)} \\
    &\le \theta^{-\frac{n}{2}}\sum_{i=0}^\infty \left(\theta^{\kappa i}\sqrt{\Phi(\rho)}+\sqrt{\C}\theta^{\gamma i}\rho^\gamma\right)\\
    &= \frac{\theta^{-\frac{n}{2}}}{1-\theta^\kappa}\sqrt{\Phi(\rho)}+\frac{\theta^{-\frac{n}{2}}}{1-\theta^\gamma}\sqrt{\C}\rho^\gamma\,.
  \end{aligned}\]
  Then, \eqref{EQ: Derivative Average Estimate} for $i=k+1$ follows via \eqref{EQ: smallness assumption} and \eqref{EQ: excess estimate smallness assumption}. For \eqref{EQ: Average Estimate}, we argue similarly, but also involve Poincar\'e's inequality. In fact, we have
  \[\begin{aligned}
    \left|(u)_{\theta^{k+1}\rho}-(u)_\rho\right| &\le \theta^{-n}\sum_{i=0}^k\dashint_{\B_{\theta^i\rho}}|u-(u)_{\theta^i\rho}|\dx \\
    &\le \c_1 \theta^{-n} \sum_{i=0}^k\theta^i\rho\dashint_{\B_{\theta^i\rho}}|\D u|\dx \\
    &\le \c_1 \theta^{-n} \sum_{i=0}^k\theta^i\rho\left(\dashint_{\B_{\theta^i\rho}}|\D u-(\D u)_{\theta^i\rho}|\dx+|(\D u)_{\theta^i\rho}|\right) \\    
    &\le \c_1 \theta^{-n} \sum_{i=0}^k\theta^i\rho\left(\sqrt{\Phi(\theta^i\rho)}+2M\right) \\
    &\le \frac{\c_1 \theta^{-n}}{1-\theta}\rho \left(\sqrt{\varepsilon}+2M\right).
  \end{aligned}\]
  Then, also \eqref{EQ: Average Estimate} for $i=k+1$ follows via \eqref{EQ: smallness assumption} and \eqref{EQ: excess estimate smallness assumption}, and the induction argument is complete.

  \smallskip
  
  Now, consider $r\in{(0,\rho]}$. There exists an integer $i\ge0$ such that $\theta^{i+1}\rho<r\le\theta^i\rho$. By \eqref{EQ: induction property 1} and Lemma \ref{LEM: average estimate}, for a constant $\c>0$, we may estimate
  \begin{align*}
    \Phi(r)&\le \theta^{-n}\Phi(\theta^i\rho)\\
    &\le \theta^{-n}\left(\theta^{2\kappa i}\Phi(\rho)+\C(\theta^i\rho)^{2\gamma}\right)\\
    &\le \c\left(\theta^{2\kappa (i+1)}\Phi(\rho)+(\theta^{i+1}\rho)^{2\gamma}\right)\\
    &\le \c\bigg(\bigg(\frac{r}{\rho}\bigg)^{2\kappa}\Phi(\rho)+
        r^{2\gamma}\bigg)\,,
  \end{align*}
  and the proof is complete.
\end{proof}

\section{Proofs of the partial regularity theorems}\label{SEC: regularity proofs}

The excess estimates prepare the ground for deducing regularity of $\D u$ on the regular set $\regreg$, defined as
\begin{equation}\label{EQ: regular set}
  \regreg
  \coleq \bigg\{x\in\Omega\,:\,\liminf_{\rho\searrow0}\dashint_{\B_\rho(x)}|\D u-(\D u)_{x,\rho}|^2\,\mathrm{d}y=0\,,\,\limsup_{\rho\searrow0} |(\D u)_{x,\rho}|+|(u)_{x,\rho}|<\infty\bigg\}\,.
\end{equation}
Specifically for $u\in\W^{1,\infty}(\Omega,\R^N)$ as in Setting \ref{Setting: local convex f}, the set $\regreg$ is nothing but the set of $\L^2$-Lebesgue points of $\D u$. However, even for arbitrary $u\in\W^{1,2}(\Omega,\R^N)$, it is a standard consequence of the Lebesgue differentiation theorem that a.\@e.\@ point belongs to $\regreg$ and in other words we have $|\Omega\setminus\regreg|=0$. Thus, this part of the conclusions in our main results is not addressed in the sequel anymore.

\subsection{Basic regularity conclusion}\label{subsec: basic regularity}

As announced above, we now apply the excess estimates of Proposition \ref{PROP: Campanatos characterisation} to establish regularity on $\regreg$. However, in the situation of Setting \ref{Setting: Quasiconvex f} we will initially reach $\C^{1,\minhe}$ regularity with exponent $\minhe$ only, while the full claim of Theorem \ref{THEO: main-intro} on $\C^{1,\ahe}$ regularity with the optimal exponent $\ahe$ is obtained only a posteriori in a further step. We now work out the last details of the initial step, while the final sharpening of the exponent is postponed to the subsequent Section \ref{subsec: sharpening}

\begin{proof}[Proof of partial $\C^{1,\minhe}$ regularity in the situation of Setting \ref{Setting: Quasiconvex f}]
  We consider an arbitrary $x_0\in \regreg$ and the exponents $\ahe,\fhe\in{(0,1)}$. 
  Then we fix the $\varepsilon>0$ from Lemma \ref{LEM: Excess extimate} which corresponds to $\kappa\coleq\frac{1+\minhe}{2}$ and $M\coleq1+\limsup_{\rho\searrow0}\big(|(\D u)_{x_0,\rho}|+|(u)_{x_0,\rho}|\big)$. 
  By the choice of $\regreg$, we have $\lim_{\rho\searrow0}\Phi(x_0,\rho)=0$. Hence, there exists $\rho_0\in{\big(0,\frac12\big]}$ with $\B_{2\rho_0}(x_0)\Subset \Omega$ and $\frac{\rho_0}{2^n}+\Phi(x_0,2\rho_0)\le \frac{\varepsilon}{2^n}$ and $|(\D u)_{x_0,\rho_0}|+|(u)_{x_0,\rho_0}|<M$. Further, by continuity of $x\mapsto |(\D u)_{x,\rho_0}|+|(u)_{x,\rho_0}|$, there exists $r_0\in{(0,\rho_0]}$ such that we have $|(\D u)_{x,\rho_0}|+|(u)_{x,\rho_0}|\le M$ for all $x\in \B_{r_0}(x_0)$. Lemma \ref{LEM: average estimate} gives 
  \[
    \Phi(x,\rho_0)
    \le \dashint_{\B_{\rho_0}(x)} |\D u-(\D u)_{x_0,2\rho_0}|^2 \dx
    \le 2^n\Phi(x_0,2\rho_0)\,,
  \]
  and thus $\rho_0+\Phi(x,\rho_0)\le \varepsilon$ holds for all $x\in \B_{r_0}(x_0)$. We may then apply Lemma \ref{LEM: Excess extimate} to infer
  \begin{equation*}
    \Phi(x,r)\le \c\bigg(\bigg(\frac{r}{\rho_0}\bigg)^{1+\minhe}\Phi(x,\rho_0)+r^{2\minhe}\bigg)
  \end{equation*}
  for all $x\in \B_{r_0}(x_0)$ and all $r\in{(0,\rho_0]}$, and we end up with
  \[
    \dashint_{\B_r(x)}|\D u-(\D u)_{x,r}|^2 \dx\le \c\left(\frac{\rho_0^{1-\minhe}}{\rho_0^{1+\minhe}}\varepsilon+1\right)r^{2\minhe}\le \c\,(\varepsilon\rho_0^{-2\minhe}+1) r^{2\minhe}
  \]
  for all $x\in\B_{r_0}(x_0)$ and $r\in{(0,\rho_0]}$. By Proposition \ref{PROP: Campanatos characterisation} with $p=2$, we find that (the Lebesgue representative of) $\D u$ is in $\C^{0,\minhe}_\loc(\B_{r_0}(x_0),\R^{N\times n})$ and therefore (the Lebesgue representative of) $u$ is in $\C^{1,\minhe}_\loc(\B_{r_0}(x_0),\R^N)$. In particular, we find $\B_{r_0}(x_0)\subseteq \regreg$, and we read off that $\regreg$ is open and that $u\in\C^{1,\minhe}_\loc (\regreg,\R^N)$ holds.
\end{proof}

\subsection[The refined Hölder exponent and a priori \texorpdfstring{$\L^\infty$}{L infinity} minimizers]{The refined Hölder exponent and a priori \texorpdfstring{\boldmath$\L^\infty$}{L infinity} minimizers}\label{subsec: sharpening}

Once we know a minimizer is $\L^\infty_{(\loc)}$ (either by previous reasoning or by assumption), we may improve the Hölder exponent from $\minhe$ to $\ahe$ and at the same time may drop the additional Morrey assumption of \eqref{EQCOND: Morrey-delta}. This follows from the subsequent adaptations of Lemma \ref{LEM: Caccioppoli inequality} and Lemma \ref{LEM: approximate harmonicity}, respectively.

\begin{lemma}[Caccioppoli inequality for a priori \texorpdfstring{$\L^\infty$}{L infinity} minimizers]\label{LEM: IMPROV: Caccioppoli inequality}
  We consider Setting \ref{Setting: release second Morrey}. For every $M>0$, there exists a constant $\c=\c\big(n,\,N,\,\b,\,\q,\,\Q_M,\,\zob_1,$ $\gc,\,M,\,\sdb_M,\|u\|_{\L^\infty(\B_\rho(x_0))}\big)$ such that we have
  \[
    \dashint_{\B_\frac{\rho}{2}(x_0)}|\D u_{\xi,\zeta}|^2 \dx \le \c\Bigg( \dashint_{\B_\rho (x_0)}\bigg|\frac{u_{\xi,\zeta}}{\rho}\bigg|^2 \dxx+\varepsilon\Bigg(\dashint_{\B_\rho(x_0)}\bigg|\frac{u_{\xi,\zeta}}{\rho}\bigg|^{2^\ast }\dxxx\Bigg)^\frac{2}{2^\ast}+\varepsilon^{-\frac\b{2-\b}}\rho^{2\ahe}\Bigg)
  \]
  for all $\varepsilon>0$, $\zeta\in \R^N$, $\xi\in\R^{N\times n}$ with $|\zeta|+|\xi|\le M$.
\end{lemma}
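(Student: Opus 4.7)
The plan is to revisit the proof of Lemma \ref{LEM: Caccioppoli inequality}, keeping essentially all of its structure, but exploiting the a priori bound $u\in\L^\infty(\B_\rho(x_0))$ in order to drop the dependence on the second Morrey condition \eqref{EQCOND: Morrey-delta} and the corresponding $\L^{2^\ast}$ term on the right-hand side.

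Concretely, I would assume without loss of generality $x_0=0$, fix $r_1<r_2$ in ${(\rho/2,\rho)}$, pick the same cut-off $\eta$, and set $v\coleq u_{\xi,\zeta}$, $\varphi\coleq\eta v$, $\psi\coleq(1-\eta)v$ as before. The quasiconvexity/minimality computation leading to \eqref{IMPROV: boundedness Caccioppoli 1}--\eqref{IMPROV: boundedness Caccioppoli 2} and the splitting
\[
  \Q_M\int_{\B_{r_1}}|\D v|^2\dx\le \mathrm{I}+\mathrm{II}+\mathrm{III}+\mathrm{IV}
\]
carries over verbatim, as do the bound $\mathrm{I}\le 0$ by minimality and the estimates $\mathrm{III}+\mathrm{IV}\le \c\int_{\B_{r_2}\setminus\B_{r_1}}(|\D v|^2+|v/(r_2-r_1)|^2)\dx$ obtained from Lemma \ref{LEM: AF inequalities} together with \eqref{EQCOND: growth condition for derivative}.

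The only place where the argument must be changed is the estimate of the zero-order term $\mathrm{II}$. Starting again from \eqref{IMPROV: Caccioppoli inequality}, I would now bound $(1+|u|+|\varphi|)^{\q-\b}$ by a constant depending only on $M$ and $\|u\|_{\L^\infty(\B_\rho(x_0))}$: indeed $|\varphi|\le|v|\le|u|+|\zeta|+|\xi|\rho\le\|u\|_{\L^\infty(\B_\rho)}+2M$ since $\rho\le1$ and $|\zeta|+|\xi|\le M$. Consequently $\q$ disappears from this step and
\[
  \mathrm{II}\le \c\int_{\B_\rho}|v|^\b\,\zob\dx\,,
\]
so the $\int\zob|v|^\q\dx$ term which previously forced \eqref{EQCOND: Morrey-delta} no longer appears. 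Applying Hölder's inequality with exponents $2^\ast/\b$ and $\sb$ and Young's inequality with exponents $2/\b$ and $2/(2-\b)$ exactly as in the original proof, and invoking \eqref{EQCOND: Morrey-ahe} (which gives $\zob\in\L^{\sb,n-\sb\b+\sb(2-\b)\ahe}(\Omega)$ and $(n-\sb\b+\sb(2-\b)\ahe)\frac{2}{\sb(2-\b)}=n+2\ahe$), I obtain
\[
  \mathrm{II}\le \c\Bigg[\varepsilon\rho^n\bigg(\dashint_{\B_\rho}\bigg|\frac{v}{\rho}\bigg|^{2^\ast}\dxxx\bigg)^{\frac{2}{2^\ast}}+\varepsilon^{-\frac\b{2-\b}}\rho^{n+2\ahe}\Bigg]\,,
\]
together with the analogous modification for $n\in\{1,2\}$ exactly as in Lemma \ref{LEM: Caccioppoli inequality}.

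Collecting the estimates for $\mathrm{I}$, $\mathrm{II}$, $\mathrm{III}$, $\mathrm{IV}$, then applying Widman's hole-filling trick followed by the iteration Lemma \ref{LEM: Iteration lemma} (exactly as in the original proof, with $R=\rho$, $r=\rho/2$) yields the stated inequality. The step that required most care, and the only genuine deviation from the earlier proof, is the a priori boundedness of $(1+|u|+|\varphi|)^{\q-\b}$, which is precisely what Setting \ref{Setting: release second Morrey} provides and which allows $\q\in[\b,\infty)$ to be arbitrary (with the new constant accordingly depending on $\|u\|_{\L^\infty(\B_\rho(x_0))}$).
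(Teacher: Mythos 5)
Your proposal is correct and follows essentially the same route as the paper: the only change to the earlier Caccioppoli argument is that the a priori bound $|u|,|\varphi|\le\c(M,\|u\|_{\L^\infty(\B_\rho(x_0))})$ reduces the zero-order term to $\mathrm{II}\le\c\int_{\B_\rho}\zob|v|^\b\dx$, and then the Hölder--Young--Morrey estimate \eqref{EQ: estimate with Morrey constants} and the hole-filling/iteration steps go through unchanged. Your explicit verification of the pointwise bound on $|v|$ and $|\varphi|$ is exactly what the paper leaves implicit.
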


\begin{proof}
  The proof is analogous to the proof of Lemma \ref{LEM: Caccioppoli inequality} with the slight difference that the a priori bound ensures $|\varphi|+|u|\le \c\big(M,\|u\|_{\L^\infty(\B_\rho(x_0))}\big)$ on $\B_\rho(x_0)$. Therefore, we can improve the estimate \eqref{IMPROV: Caccioppoli inequality} in order to control the term $\mathrm{II}$ from the proof of Lemma \ref{LEM: Caccioppoli inequality} by
  \[
    \mathrm{II}
    =\int_{\B_{r_2}}g(x,u-\varphi)-g(x,u)\dx
    \le \c\int_{\B_{\rho}}\zob|v|^\b\dx\,.
  \]
  We combine this with the estimate \eqref{EQ: estimate with Morrey constants}, whose derivation is not changed, to find
  \[
    \mathrm{II}
    \le \c\Bigg(\varepsilon\rho^n\Bigg(\dashint_{\B_\rho}\bigg|\frac{v}{\rho}\bigg|^{2^\ast}\dxxx\Bigg)^\frac{2}{2^\ast} +\varepsilon^{-\frac{\b}{2-\b}}\rho^{n+2\ahe}\Bigg)\,.
  \]
  Estimating the other in exactly the same way as in the proof of Lemma \ref{LEM: Caccioppoli inequality}, we come out with the claimed form of the inequality.
\end{proof}

\begin{lemma}[approximate $A$-harmonicity for a priori $\L^\infty$ minimizers]\label{LEM: IMPROV: Approximate A-harmonicity} We consider Setting \ref{Setting: release second Morrey}. For each bound $M>0$, there exists a constant $\c=\c\big(n,N,\b,\q,\zob_1,\gc,M,\sdb_M,\|u\|_{\L^\infty(\B_\rho(x_0))}\big)$ such that we have
\begin{equation*}
    \left|\dashint_{\B_\rho(x_0)}\D^2 f(\xi)\D u_{\xi,\zeta}\cdot\D\varphi\dx \right|\le \c\left(\sqrt{\widehat{\nu}_M(\Phi)\Phi}+\Phi+\rho^{\ahe}\right)\|\D \varphi\|_{\L^\infty(\B_\rho(x_0))}
\end{equation*}
for all $\varphi\in\W^{1,\infty}_0(\B_\rho(x_0),\R^N)$, $\zeta\in\R^n$, $\xi\in\R^{N\times n}$ with $|\xi|<M$, where $\Phi$ is defined as in Lemma \ref{LEM: approximate harmonicity}.
\end{lemma}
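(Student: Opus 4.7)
The plan is to follow the proof of Lemma \ref{LEM: approximate harmonicity} essentially verbatim, exploiting the a priori $\L^\infty$-bound on $u$ to bypass the step that previously generated the $\dashint_{\B_\rho(x_0)}|u_{\xi,\zeta}/\rho|^{2^\ast}$ term. In more detail, we would first normalize by assuming $x_0=0$ and $\|\D\varphi\|_{\L^\infty(\B_\rho)}=1$, in particular $\|\varphi\|_{\L^\infty(\B_\rho)}\le\rho\le 1$, and then once again set $v\coleq u_{\xi,\zeta}$, $\sigma\coleq\rho^\ahe\in(0,1]$, and split
\[
  \dashint_{\B_\rho}\D^2 f(\xi)\D v\cdot \D\varphi\dx=\mathrm{I}+\mathrm{II}+\mathrm{III}
\]
exactly as in \eqref{EQ: AA-split-terms} of the earlier proof.

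The decisive step is the improved estimate for $\mathrm{III}$. As before, minimality of $u$ yields $\mathrm{III}\le\tfrac{1}{\sigma}\dashint_{\B_\rho}g(x,u-\sigma\varphi)-g(x,u)\dx$, and the Morrey–Hölder condition \eqref{EQCOND: g-term estimate} gives the pointwise bound
\[
  |g(x,u-\sigma\varphi)-g(x,u)|\le\zob(x)(1+2|u|+\sigma\rho)^{q-\b}(\sigma\rho)^\b\,.
\]
The key new input is that $u\in\L^\infty(\Omega,\R^N)$ and $|\varphi|\le\rho\le1$, so that the factor $(1+2|u|+\sigma\rho)^{q-\b}$ is bounded by a constant depending only on $M$ and $\|u\|_{\L^\infty(\B_\rho(x_0))}$. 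This removes any need to pay attention to $|v|^q$ terms. Hence
\[
  \mathrm{III}\le\c\,\sigma^{\b-1}\rho^\b\dashint_{\B_\rho}\zob\dx\,,
\]
and using $\L^{\sb,n+\sb((2-\b)\ahe-\b)}(\Omega)\subseteq \L^{1,n+(2-\b)\ahe-\b}(\Omega)$ from Remark \ref{REM: Morrey subset relation} in combination with the abbreviation $\zob_1$ exactly as in the proof of Lemma \ref{LEM: approximate harmonicity}, we obtain $\rho^\b\dashint_{\B_\rho}\zob\dx\le\c\,\rho^{(2-\b)\ahe}$, whence $\mathrm{III}\le\c\,\sigma^{\b-1}\rho^{(2-\b)\ahe}=\c\,\rho^\ahe$.

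For the remaining terms $\mathrm{I}$ and $\mathrm{II}$, the estimates in the proof of Lemma \ref{LEM: approximate harmonicity} rely only on the $\C^2$ structure and the quadratic growth of $f$ together with Lemma \ref{LEM: AF inequalities} and the modulus of continuity \eqref{EQ: modulus-of-continuity}; they do not involve the zero-order term and hence carry over unchanged in Setting \ref{Setting: release second Morrey}. In particular, we still get
\[
  \mathrm{I}\le \sqrt{\widehat{\nu}_M(\Phi)\Phi}+\c\,\Phi
  \qquad\text{and}\qquad
  \mathrm{II}\le\c\,\rho^\ahe+\c\,\Phi\,.
\]
Summing the three bounds yields the claimed one-sided inequality; the opposite sign is handled by the same symmetric argument (testing with $-\sigma\varphi$) indicated at the end of the proof of Lemma \ref{LEM: approximate harmonicity}. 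No real obstacle is expected here; the only point needing care is to verify that the bound on $(1+2|u|+\sigma\rho)^{q-\b}$ absorbs cleanly into the constant $\c$ depending on $\|u\|_{\L^\infty(\B_\rho(x_0))}$, and that one does not inadvertently reintroduce $\L^{\sq}$-type hypotheses on $\zob$ --- since these were the reason why \eqref{EQCOND: Morrey-delta} was needed in the unbounded case and are exactly what we are able to drop.
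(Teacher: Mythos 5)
Your proposal is correct and matches the paper's own proof: both follow the proof of Lemma \ref{LEM: approximate harmonicity} verbatim, using the a~priori $\L^\infty$ bound only to absorb the factor $(1+2|u|+\sigma\rho)^{q-\b}$ into the constant, which replaces the estimates from \eqref{EQ: Approx A-harmonicity q-estimate} through \eqref{EQ: est-g-diff} by the single chain $\mathrm{III}\le\c\,\sigma^{\b-1}\rho^\b\dashint_{\B_\rho}\zob\dx\le\c\,\rho^{\ahe}$ while leaving $\mathrm{I}$, $\mathrm{II}$, and the symmetric lower bound untouched. The only cosmetic remark is that the constant bounding $(1+2|u|+\sigma\rho)^{q-\b}$ depends on $\|u\|_{\L^\infty(\B_\rho(x_0))}$ and $\rho\le1$ but not actually on $M$, which is harmless.
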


\begin{proof}
  The proof is analogous to the proof of Lemma \ref{LEM: approximate harmonicity} with the estimates from \eqref{EQ: Approx A-harmonicity q-estimate} to \eqref{EQ: est-g-diff} replaced by simply
  \[
    \mathrm{III}
    \le\c\sigma^{\b-1}\rho^\b\dashint_{\B_\rho}\zob\dx
    \le\c\sigma^{\b-1}\rho^{(2-\b)\ahe}
    =\c\rho^\ahe
  \]
  with $\c$ depending also on $\|u\|_{\L^\infty(\B_\rho)}$. The estimates for the other terms remain the same as in the proof of Lemma \ref{LEM: approximate harmonicity}.
\end{proof}

At this stage, we finalize the proof of the first two results from the introduction.

\begin{proof}[Proof of Theorem \ref{THEO: main-intro} and Theorem \ref{THEO: a-priori-bounded}]
  We first work in the situation of Setting \ref{Setting: release second Morrey} and thus prove Theorem \ref{THEO: a-priori-bounded}. Indeed, with Lemmas \ref{LEM: IMPROV: Caccioppoli inequality} and \ref{LEM: IMPROV: Approximate A-harmonicity} at hand we may go over the proofs of Lemma \ref{LEM: before excess estmate} and Lemma \ref{LEM: Excess extimate} in order to reach the same conclusions with $\minhe$ replaced by $\ahe$ (and with constants which now depend on $\|u\|_{\L^\infty(\B_\rho(x_0))}$, but no longer on $\zob_2$). Then, in analogy with the reasoning of Section \ref{subsec: basic regularity} we arrive at the regularity claim $u\in\C^{1,\ahe}_\loc(\regreg,\R^N)$.

  \smallskip
    
  Now we turn to Setting \ref{Setting: Quasiconvex f} and finalize the proof of Theorem \ref{THEO: main-intro}. To this end, we observe that the regularity proved in Section \ref{subsec: basic regularity} implies in particular $u\in\L^\infty_\loc(\regreg,\R^N)$. Hence, on every open $\widetilde\Omega\Subset\regreg$ we are back precisely to the situation of Setting \ref{Setting: release second Morrey}, and $u\in\C^{1,\ahe}_\loc(\regreg,\R^N)$ is available from the previous reasoning.
\end{proof}

\subsection[Non-uniform ellipticity and a priori \texorpdfstring{$\W^{1,\infty}$}{W\^{}\{1,\textbackslash{}infty\}} minimizers]{Non-uniform ellipticity and a priori \texorpdfstring{\boldmath$\W^{1,\infty}$}{W\^{}\{1,\textbackslash{}infty\}} minimizers}\label{SUBSEC: a priori Lipschitz minimizers}

Once we know a minimizer is $\W^{1,\infty}_{(\loc)}$, we can even deal with \emph{locally} uniform ellipticity in the sense of merely \eqref{EQCOND: Legendre}. This standard observation leads to Theorem \ref{THEO: a-priori-Lipschitz} and eventually also helps in proving Theorem \ref{THEO: opt-Massari}. On the technical side, it rests on yet another slightly adapted Caccioppoli inequality, as stated next.

\begin{lemma}[Caccioppoli inequality for a priori \texorpdfstring{$\W^{1,\infty}$}{W 1 infinity} minimizers in non-uniformly elliptic cases]\label{LEM: Caccioppoli inequality without quasiconvexity}
  We consider Setting \ref{Setting: local convex f}. For every $M>0$, there exists a constant $\c=\c(n,N,\b,\q,\zob_1,\gc,M,\M,\sdb_M,\lc_{M+\M})$ such that we have
  \[
    \dashint_{\B_\frac{\rho}{2}(x_0)}|\D u_{\xi,\zeta}|^2 \dx \le \c\Bigg( \dashint_{\B_\rho (x_0)}\bigg|\frac{u_{\xi,\zeta}}{\rho}\bigg|^2 \dxx+\varepsilon\Bigg(\dashint_{\B_\rho(x_0)}\bigg|\frac{u_{\xi,\zeta}}{\rho}\bigg|^{2^\ast }\dxxx\Bigg)^\frac{2}{2^\ast}+\varepsilon^{-\frac\b{2-\b}}\rho^{2\ahe}\Bigg)
  \]
  for all $\varepsilon>0$, $\zeta\in \R^N$, $\xi\in\R^{N\times n}$ with $|\zeta|+|\xi|\le M$ and all $x_0\in \Omega$, $\rho\in(0,1]$ such that $\B_\rho (x_0)\Subset \Omega$ and $\M\coleq  \|u\|_{\W^{1,\infty}(\B_\rho(x_0))}$.
\end{lemma}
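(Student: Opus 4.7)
The strategy is to retrace the proof of Lemma \ref{LEM: Caccioppoli inequality} (or rather its $\L^\infty$-adaptation in Lemma \ref{LEM: IMPROV: Caccioppoli inequality}) and replace the single step which genuinely used $2$-strict quasiconvexity of $f$ by an argument that combines \textbf{local} uniform ellipticity on the inner ball with the \textbf{global} convexity of $f$ on the annulus. Assuming $x_0=0$, we fix $r_1<r_2$ in ${(\rho/2,\rho)}$, take a cut-off $\eta\in\C^\infty_\cpt(\B_{r_2})$ with $\eta\equiv1$ on $\B_{r_1}$ and $|\D\eta|\le 2/(r_2-r_1)$, and set $v\coleq u_{\xi,\zeta}$, $\varphi\coleq\eta v$, $\psi\coleq(1-\eta)v$ as before.

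The key replacement for \eqref{IMPROV: boundedness Caccioppoli 1} is obtained as follows. Integrating the second-order Taylor identity
\[
  f(\xi+\D\varphi)-f(\xi)-\D f(\xi)\cdot\D\varphi
  =\int_0^1(1{-}t)\,\D^2 f(\xi+t\D\varphi)\D\varphi\cdot\D\varphi\d t
\]
yields a pointwise non-negative integrand everywhere on $\B_{r_2}$ by convexity of $f$ (\eqref{EQCOND: Legendre} implies $\D^2f\ge0$ globally). On the inner ball $\B_{r_1}$ we have $\D\varphi=\D v$, hence $\xi+t\D\varphi=(1{-}t)\xi+t\D u$ satisfies $|\xi+t\D\varphi|\le M+\M$ for all $t\in[0,1]$; consequently \eqref{EQCOND: Legendre} gives the pointwise lower bound $\ge\frac{\lc_{M+\M}}{2}|\D v|^2$ on $\B_{r_1}$. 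Since $\int_{\B_{r_2}}\D f(\xi)\cdot\D\varphi\dx=0$, integrating and restricting to $\B_{r_1}$ on the left-hand side produces the clean estimate
\[
  \frac{\lc_{M+\M}}{2}\int_{\B_{r_1}}|\D v|^2\dx
  \le\int_{\B_{r_2}}\bigl[f(\xi+\D\varphi)-f(\xi)\bigr]\dx\,,
\]
which plays exactly the role of the initial quasiconvexity step in \eqref{IMPROV: boundedness Caccioppoli 1}.

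From here the argument is essentially identical to Lemma \ref{LEM: IMPROV: Caccioppoli inequality}. The decomposition analogous to \eqref{IMPROV: boundedness Caccioppoli 2} introduces $\F[u]{-}\F[u{-}\varphi]\le0$ (by minimality), a $g$-difference term which by \eqref{EQCOND: g-term estimate} and the a priori bound $|u|\le\M$ is controlled by $\c\int_{\B_\rho}\zob|v|^\b\dx$ (the full $|v|^\q$ term is not needed since $|v|$ is bounded by $\c(M,\M)$), and two annulus terms $\mathrm{III}$, $\mathrm{IV}$ to be handled via Lemma \ref{LEM: AF inequalities} exactly as in the original proof; these give the familiar contribution $\c\int_{\B_{r_2}\setminus\B_{r_1}}[|\D v|^2+|v/(r_2-r_1)|^2]\dx$. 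The $g$-term is then estimated through Young's inequality and the Morrey assumption \eqref{EQCOND: Morrey-ahe} for $\zob$, yielding \eqref{EQ: estimate with Morrey constants}, i.e.
\[
  \int_{\B_\rho}|v|^\b\zob\dx
  \le\c\biggl[\varepsilon\rho^n\biggl(\dashint_{\B_\rho}\Bigl|\frac{v}{\rho}\Bigr|^{2^\ast}\dxxx\biggr)^{\!2/2^\ast}\!\!+\varepsilon^{-\frac\b{2-\b}}\rho^{n+2\ahe}\biggr]\,.
\]

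Finally, Widman's hole-filling step and the iteration Lemma \ref{LEM: Iteration lemma} applied to the resulting inequality on $v(t)\coleq\int_{\B_t}|\D v|^2\dx$ produce the desired Caccioppoli estimate on $\B_{\rho/2}$, with the constant depending on the parameters listed in the statement (with $\lc_{M+\M}$ entering through the division in the very first step). The only genuinely new ingredient is the pointwise argument in the previous paragraph; the main point of attention is to ensure that $|\xi+t\D\varphi|\le M+\M$ really holds for all $t\in[0,1]$ on $\B_{r_1}$ so that \eqref{EQCOND: Legendre} is applicable, while the annulus is completely absorbed by the mere non-negativity supplied by convexity.
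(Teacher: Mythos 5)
Your proof is correct and follows essentially the same route as the paper: you replace the quasiconvexity step \eqref{IMPROV: boundedness Caccioppoli 1} by bounding $\frac{\lc_{M+\M}}{2}\int_{\B_{r_1}}|\D v|^2\dx$ through the Taylor remainder of $f$, using local ellipticity \eqref{EQCOND: Legendre} on $\B_{r_1}$ (where $|\xi+t\D v|\le M+\M$) and global non-negativity of $\D^2 f$ on the annulus, then drop $\int_{\B_{r_2}}\D f(\xi)\cdot\D\varphi\dx=0$ and proceed as in Lemma \ref{LEM: IMPROV: Caccioppoli inequality}. The paper writes the same remainder as a double integral $\int_0^1\int_0^1 t\,\D^2 f(\xi+\tau t\D\varphi)\D\varphi\cdot\D\varphi\d\tau\d t$ while you use the equivalent single-integral form $\int_0^1(1-t)\D^2 f(\xi+t\D\varphi)\D\varphi\cdot\D\varphi\d t$; the substance is identical.
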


\begin{proof}
  Since $u$ is in particular in $\L^\infty(\Omega,\R^N)$, we can basically repeat the proof of Lemma \ref{LEM: Caccioppoli inequality} with the modifications of Lemma \ref{LEM: IMPROV: Caccioppoli inequality}. However, in this reasoning we replace the estimate \eqref{IMPROV: boundedness Caccioppoli 1} based on quasiconvexity of $f$ with the following computation. Indeed, we first recall $\D v=\D u_{\xi,\zeta}=\D u-\xi$ and record $|\xi+\tau\D v|\le |\xi|+|\D u|\le M+\M$ for all $\tau\in{[0,1]}$ in the present situation. On the basis of this observation, we then use the possibly non-uniform ellipticity \eqref{EQCOND: Legendre} to estimate and rewrite
  \[\begin{aligned}
    \frac{\lc_{M+\M}}{2}\int_{\B_{r_1}} |\D v|^2 \dx
    &\le \int_{\B_{r_1}} \int_0^1 \int_0^1 \D^2f(\xi+\tau t\D v)\D v\cdot \D v \, t\d \tau\d t \dx\\
    &\le \int_{\B_{r_2}} \int_0^1 \int_0^1 \D^2f(\xi+\tau t\D \varphi)\D \varphi\cdot \D \varphi \, t\d \tau\d t \dx\\
    &= \int_{\B_{r_2}}f(\xi+\D \varphi)-f(\xi)- \D f(\xi)\cdot\D\varphi\dx\\
    &=  \int_{\B_{r_2}}f(\xi+\D \varphi)-f(\xi) \dx\,.
  \end{aligned}\]
  As foreshadowed above, the resulting inequality is then used as a one-to-one substitute for \eqref{IMPROV: boundedness Caccioppoli 1} in the arguments already used for Lemma \ref{LEM: Caccioppoli inequality} and Lemma \ref{LEM: IMPROV: Caccioppoli inequality}.
\end{proof}

\begin{proof}[Proof of Theorem \ref{THEO: a-priori-Lipschitz}]
  We rely on Lemma \ref{LEM: Caccioppoli inequality without quasiconvexity} and on Lemma \ref{LEM: IMPROV: Approximate A-harmonicity} and otherwise on the same arguments explicated in Sections \ref{sec:excess}, \ref{subsec: basic regularity}, and \ref{subsec: sharpening} (where several constants now depend on $\M\coleq\|u\|_{\W^{1,\infty}(\B_\rho(x_0),\R^N)}$ and $\lc_{M+\M}$, but no longer on $\Q_M$, and where in the proof of Lemma \ref{LEM: before excess estmate} the requirement \eqref{EQCOND: lower bound Harmonic} comes directly from \eqref{EQCOND: Legendre}). By these arguments we then arrive at $u\in\C^{1,\ahe}_\loc(\regreg,\R^N)$ as before.
\end{proof}

\subsection[\texorpdfstring{$\L^p$}{L\^{}p}-Hölder zero-order terms and \texorpdfstring{$\L^p$}{L\^{}p}-\texorpdfstring{$\W^{1,r}$}{W\^{}\{1,r\}} zero-order terms]{\texorpdfstring{\boldmath$\L^p$}{L\^{}p}-Hölder zero-order terms and \texorpdfstring{\boldmath$\L^p$}{L\^{}p}-\texorpdfstring{\boldmath$\W^{1,r}$}{W\^{}\{1,r\}} zero-order terms}\label{SUBSEC: specific-terms}

In view of the embeddings of $\L^p$ spaces into Morrey spaces (cf.\@ Remark \ref{REM: Morrey subset relation}), our main results have straightforward corollaries for the case of an $\L^p$-Hölder zero-order term. Basically, this issue has already been touched upon in situations \ref{ITEM: unif-Cbeta} and \ref{ITEM: Lp-Cbeta} of the introduction, but still we prefer to explicate it here:

\begin{cor}[partial regularity for variational integrals with $\L^p$-Hölder zero-order integrand]\label{COR: Hölder-0-order-terms}
  We suppose that $f$ is as Setting \ref{Setting: Quasiconvex f}, Setting \ref{Setting: release second Morrey}, or Setting \ref{Setting: local convex f}. For the Carath\'eodory integrand $g\colon\Omega\times\R^N\to\R$ we assume \eqref{EQCOND: g-term estimate} with merely
  \[
    \b\in{(0,1]}\,,\qquad
    \q\in{[\b,2^\ast)}\,,\qquad
    \Gamma\in\L^p_\loc(\Omega)
    \text{ for some }p\in \bigg(\frac{n}{\b},\infty\bigg]
  \]
  and in case of Setting \ref{Setting: Quasiconvex f} additionally with $p\ge\sq$ \textup{(}where, for $n\in\{1,2\}$, the range for $q$ should be read as $q\in{[\b,\infty)}$ and the additional requirement as void\/\textup{)}. Then, for every local minimizer $u$ of\/ $\mathcal{F}$ from \eqref{DEF: integral functional}, we have
  \[
    u\in\C^{1,\ahe}_\loc(\regreg,\R^N)
    \qquad\text{with}\qquad
    \begin{cases}
      \displaystyle\ahe=\frac{\b-n/p}{2-\b}&\text{in case }p<\infty\,,\\[1.5ex]
      \displaystyle\ahe=\frac\b{2-\b}&\text{in case }p=\infty\,,\,\b<1\,,\\
      \displaystyle\text{any }\ahe<1&\text{in case }p=\infty\,,\,\b=1\,.
    \end{cases}
  \]
\end{cor}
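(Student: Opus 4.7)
The strategy is to reduce the corollary to the appropriate main theorem (Theorem \ref{THEO: main-intro}, \ref{THEO: a-priori-bounded}, or \ref{THEO: a-priori-Lipschitz}) by invoking the Morrey space embedding $\L^p(\Omega)\subseteq\L^{r,n-nr/p}(\Omega)$ from Remark \ref{REM: Morrey subset relation}, which is valid for any $r\in[1,p]$. First I would verify the applicability of this embedding with $r=\sb$, i.e.\@ the inequality $p\ge\sb$. In the main case $n\ge3$, a short computation from $\sb=\frac{2^\ast}{2^\ast-\b}$ and $2^\ast=\frac{2n}{n-2}$ shows that $\frac n\b\ge\sb$ is equivalent with $\b\le2$, which is automatic from $\b\in{(0,1]}$. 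Hence $p>\frac n\b\ge\sb$ as needed; for $n\in\{1,2\}$ an analogous check using the convention on $2^\ast$ yields the same conclusion.

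With this embedding at hand, the main step is then a direct matching of Morrey exponents. In case $p<\infty$, the embedding gives $\Gamma\in\L^{\sb,n-\sb n/p}_\loc(\Omega)$, and setting $\ahe\coleq\frac{\b-n/p}{2-\b}$ the identity
\[
  n-\sb\frac np=n+\sb\bigl((2{-}\b)\ahe-\b\bigr)
\]
holds by direct computation, so that \eqref{EQ: Gamma-s-gamma} is satisfied with the asserted $\ahe$. The hypothesis $p>\frac n\b$ ensures $\ahe>0$, while $\ahe\le\frac\b{2-\b}$ is automatic, so $\ahe$ falls into the admissible range. In case $p=\infty$, one uses $\L^\infty_\loc\subseteq\L^{\sb,n}_\loc$: for $\b<1$ this directly yields \eqref{EQ: Gamma-s-gamma} with $\ahe=\frac\b{2-\b}$, and for $\b=1$ one just picks any $\ahe<1$, so that $n+\sb((2{-}\b)\ahe-\b)=n-\sb(1-\ahe)<n$ and the required Morrey inclusion again follows from $\L^\infty_\loc\subseteq\L^{\sb,n}_\loc$.

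Finally, under Setting \ref{Setting: Quasiconvex f} I must also check that condition \eqref{EQ: Gamma-delta} is implied by the additional assumption $p\ge\sq$. If $\q>2$, then $\L^p_\loc\subseteq\L^\sq_\loc$ is immediate. If $\q\le2$, the embedding yields $\Gamma\in\L^{\sq,n-\sq n/p}_\loc(\Omega)$, and since $\frac np<\b\le\q$ we have $n-\sq n/p>n-\sq\q$, so the Morrey exponent $n-\sq n/p$ qualifies as some $t>n-\sq\q$ in \eqref{EQ: Gamma-delta}. The cases $n\in\{1,2\}$ work identically once the convention on $2^\ast$ is invoked, and under Settings \ref{Setting: release second Morrey} and \ref{Setting: local convex f} condition \eqref{EQ: Gamma-delta} is not required, so the additional hypothesis $p\ge\sq$ can indeed be dropped there. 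The only delicate point in the whole argument is the arithmetic matching of Morrey exponents, which however reduces to the short computation above; everything else is a direct appeal to the relevant main theorem.
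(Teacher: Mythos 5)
Your proposal is correct and follows essentially the same route as the paper's proof: verify $p\ge\sb$ from $p>n/\beta$ and $\beta\le2$, embed $\L^p_\loc\subseteq\L^{\sb,n-\sb n/p}_\loc$, match the Morrey exponent arithmetic to read off $\ahe$, and in Setting \ref{Setting: Quasiconvex f} additionally deduce \eqref{EQ: Gamma-delta} via $p\ge\sq$ and $n-\sq n/p>n-\sq\q$. The one point the paper handles slightly more compactly is that it does not split into the $\q>2$ and $\q\le2$ subcases for verifying \eqref{EQ: Gamma-delta}, since the single Morrey inclusion $\Gamma\in\L^{\sq,n-\sq n/p}_\loc(\Omega)$ already covers both; but your explicit split is harmless and equally valid.
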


\begin{proof}
  We first treat the case $p<\infty$. Since in case $n\ge3$ we have $p>\frac n\b>\sb$ and in case $n\in\{1,2\}$ can deduce $p\ge\sb$ from $p>\frac n\b\ge1$ by taking $2^\ast$ large enough, in any dimension we infer $\zob\in\L^p(\Omega)\subseteq\L^{\sb,n-\sb\frac np}(\Omega)=\L^{\sb,n+\sb((2-\b)\ahe-\b)}(\Omega)$ for $\ahe=\frac{\b-n/p}{2-\b}$. In case of Setting \ref{Setting: Quasiconvex f}, by assumption or choice of $2^\ast$ we additionally get $p\ge\sq$ and $\zob\in\L^p(\Omega)\subseteq\L^{\sq,n-\sq\frac np}(\Omega)$ with $n-\sq\frac np>n-\sq\b\ge n-\sq\q$. Thus, we may apply Theorem \ref{THEO: main-intro}, Theorem \ref{THEO: a-priori-bounded}, or Theorem \ref{THEO: a-priori-Lipschitz}, respectively, to deduce the claimed regularity.

  \smallskip
  
  The case $p=\infty$ is similar. Since we have $\zob\in\L^\infty(\Omega)\subseteq\L^{s,n}(\Omega)$ for all $s\in[1,\infty)$, the claimed regularity comes from Theorem \ref{THEO: main-intro}, Theorem \ref{THEO: a-priori-bounded}, or Theorem \ref{THEO: a-priori-Lipschitz}, respectively, with the choice $\ahe=\frac{\b}{2-\b}$ in case $\b<1$ and with arbitrary $\ahe\in{(0,1)}$ in case $\beta=1$.
\end{proof}

We also explicate a corresponding result for an $\L^p$-$\W^{1,r}$ zero-order term or, in other words, for a zero-order term of certain integral form. This essentially reproduces situations \ref{ITEM: unif-W1r} and \ref{ITEM: Lp-W1r} of the introduction.

\begin{cor}[partial regularity for scalar variational integrals with \texorpdfstring{$\L^p$}{L\^{}p}-\texorpdfstring{$\W^{1,r}$}{W\^{}\{1,r\}} zero-order integrands]\label{COR: Integral LrLp type terms}
  We suppose that $f$ is as Setting \ref{Setting: Quasiconvex f}, Setting \ref{Setting: release second Morrey}, or Setting \ref{Setting: local convex f} in the scalar case $N=1$. Moreover, we consider $H\in\L^1_\loc(\Omega\times\R)$ which satisfies
  \[
    H\in\L^p(\Omega,\L^r(\R))
    \qquad\text{with }r\in{(1,\infty]}\text{ and }p\in{(nr',\infty]}
  \]
  \textup{\big(}with integrability understood in the sense of\/
  $\int_\Omega\|H(x,\,\cdot\,)\|_{\L^r(\R)}^p\dx<\infty$\textup{\big)}. Then, for every local minimizer $u\in\W^{1,2}_\loc(\Omega)$ of the variational integral
  \begin{equation}\label{EQ: non-parametric-Massari-functional}
    \F[w]\coleq\int_\Omega\bigg[f(\D w(x))-\int_0^{w(x)}H(x,t)\,\d t\bigg]\dx\,,
  \end{equation}
  we have
  \[
    u\in\C^{1,\ahe}_\loc(\regreg)
    \qquad\text{with}\qquad
    \begin{cases}
      \displaystyle\ahe=\frac{r-1-nr/p}{r+1}&\text{in case }p<\infty\,,\,r<\infty\,,\\[1ex]
      \displaystyle\ahe=1-n/p&\text{in case }p<\infty\,,\,r=\infty\,,\\[.5ex]
      \displaystyle\ahe=\frac{r-1}{r+1}&\text{in case }p=\infty\,,\,r<\infty\,,\\
      \displaystyle\text{any }\ahe<1&\text{in case }p=\infty\,,\,r=\infty\,.
    \end{cases}
  \]
\end{cor}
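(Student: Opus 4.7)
The strategy is to recognize that Corollary \ref{COR: Integral LrLp type terms} is essentially a direct corollary of the preceding Corollary \ref{COR: Hölder-0-order-terms} once the integral form $g(x,y)\coleq -\int_0^y H(x,t)\d t$ is shown to fit into the Morrey-Hölder framework \eqref{EQCOND: g-term estimate} with $q=\b$. Accordingly, the first task is to identify the correct parameters.

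I would first fix $\b\coleq 1-\tfrac1r$ in case $r<\infty$ and $\b\coleq1$ in case $r=\infty$, set $q\coleq\b$, and define $\zob(x)\coleq\|H(x,\,\cdot\,)\|_{\L^r(\R)}$. Then for $y\le\widehat y$ in $\R$, Hölder's inequality on the interval ${[y,\widehat y]}$ yields
\[
  |g(x,y)-g(x,\widehat y)|
  =\bigg|\int_y^{\widehat y}\!H(x,t)\d t\bigg|
  \le\bigg(\int_y^{\widehat y}\!|H(x,t)|^r\d t\bigg)^{\!1/r}|y-\widehat y|^{1-1/r}
  \le\zob(x)\,|y-\widehat y|^\b,
\]
with the obvious modification $|g(x,y)-g(x,\widehat y)|\le\zob(x)|y-\widehat y|$ when $r=\infty$. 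In either case, since $q-\b=0$, this gives precisely \eqref{EQCOND: g-term estimate} with the chosen $\b$, $q$, and $\zob$. Joint measurability of $g$ follows from Fubini, and continuity in $y$ from dominated convergence, so $g$ is Carathéodory.

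Next I would verify the hypotheses of Corollary \ref{COR: Hölder-0-order-terms}. By definition $\zob\in\L^p_\loc(\Omega)$, and the assumption $p>nr'$ rewrites as $p>\tfrac{n}{\b}$ precisely when $r<\infty$ (and as $p>n$ when $r=\infty$, consistent with $\b=1$). Note that $q=\b\le1<2^\ast$, and the supplementary requirement $p\ge s_q=s_\b$ is automatic because $p>n/\b\ge s_\b$ (straightforward since $s_\b=\tfrac{2^\ast}{2^\ast-\b}$ and $n(2^\ast{-}\b)\ge\b\cdot 2^\ast$ is equivalent to $2\ge\b$). Thus Corollary \ref{COR: Hölder-0-order-terms} applies and produces the Hölder exponent $\ahe=\tfrac{\b-n/p}{2-\b}$ when $p<\infty$, $\ahe=\tfrac{\b}{2-\b}$ when $p=\infty$ and $\b<1$, and arbitrary $\ahe<1$ when $\b=1$, $p=\infty$.

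Finally, I would substitute the value of $\b$ into the expression for $\ahe$ to match the four case distinctions in the statement. For $p,r<\infty$, using $2-\b=\tfrac{r+1}r$ and $\b-\tfrac np=\tfrac{p(r-1)-nr}{pr}$ gives $\ahe=\tfrac{r-1-nr/p}{r+1}$. For $p<\infty$, $r=\infty$ the formula collapses to $\ahe=1-\tfrac np$; for $p=\infty$, $r<\infty$ it yields $\tfrac{r-1}{r+1}$; and for $r=p=\infty$ one simply obtains any $\ahe<1$. There is no real obstacle in the argument — the only subtlety is keeping track of the limit cases $r=\infty$ and $p=\infty$, and confirming that the condition $p>nr'$ translates correctly to the $p>n/\b$ hypothesis of Corollary \ref{COR: Hölder-0-order-terms}.
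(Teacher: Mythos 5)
Your proof is correct and follows essentially the same route as the paper: set $\b=1/r'$, $q=\b$, $\zob(x)=\|H(x,\cdot)\|_{\L^r(\R)}$, obtain \eqref{EQCOND: g-term estimate} by H\"older's inequality, check that the hypotheses of Corollary \ref{COR: Hölder-0-order-terms} hold (including $p\ge s_q=s_\b$, which the paper also notes is automatic by the same comparison $p>n/\b>s_\b$), apply it, and substitute $\b=\frac{r-1}{r}$ to arrive at the four-case exponent formula. The only difference is cosmetic: you spell out the Carath\'eodory verification and the algebraic simplification, which the paper leaves implicit.
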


\begin{proof}
  We set
  \[
    g(x,y)\coleq -\int_0^y H(x,t)\,\d t
    \qquad\text{for }x\in\Omega\text{ and }y\in\R
  \]
  and then observe that Hölder's inequality yields the $\L^p$-Hölder condition
  \[
    |g(x,y)-g(x,\widehat y)|
    \le\Gamma(x)|y-\widehat y|^\b  
    \qquad\text{for all }x\in\Omega\text{ and }y,\widehat y\in\R
  \]
  with $\zob\in\L^p(\Omega)$ given by $\zob(x)\coleq\|H(x,\,\cdot\,)\|_{\L^r(\R)}$ and with $\b\coleq\frac1{r'}\in{(0,1]}$. Since we assume $p>nr'=\frac n\b$, this brings us in position to apply Corollary \ref{COR: Hölder-0-order-terms} with $\q=\b$ (where now in view of $\sq=\sb$ and the initial reasoning in the previous proof the additional requirement $p\ge\sq$ is always valid). By inserting $\b=\frac1{r'}=\frac{r-1}r$ into the exponents $\ahe$ of Corollary \ref{COR: Hölder-0-order-terms}, the regularity outcome then takes the form of the current claim.
\end{proof}

\begin{rem}
  At least in case $p=r<\infty$, the Hölder exponent $\alpha$ reached in Corollary \ref{COR: Integral LrLp type terms} is optimal. This can be confirmed by transferring the counterexamples of \cite[Section 4]{SchmidtSchuettOptHoeldExp} from the parametric to the non-parametric setting (compare also Section \ref{SEC: optimal Hölder exponent for Massari} for transferring regularity the opposite way round).
\end{rem}

\section{The optimal Hölder exponent in Massari's regularity theorem}\label{SEC: optimal Hölder exponent for Massari}

At this stage, we recall that our final aim is improving Massari's regularity theorem up to the limit Hölder exponent $\alpha_\mathrm{opt}$. This will be achieved by considering the non-parametric Massari-type functional \eqref{EQ: non-parametric-Massari-functional} with $H\in\L^p(\Omega\times\R)=\L^p(\Omega,\L^p(\R))$ and on applying Corollary \ref{COR: Integral LrLp type terms} in the particular case $p=r$. We already record that in this case the requirement $p>nr'$ means nothing but $p>n+1$ and that, for $p=r<\infty$, the Hölder exponent reached in Corollary \ref{COR: Integral LrLp type terms} actually boils down to our target exponent $\alpha_\mathrm{opt}=\frac{p-(n+1)}{p+1}$. In fact, with Corollary \ref{COR: Hölder-0-order-terms} at hand, the proof of Theorem \ref{THEO: opt-Massari} essentially reduces to deducing a suitable non-parametric minimality property from the parametric one assumed:

\begin{proof}[Proof of Theorem \ref{THEO: opt-Massari}]
  We consider a set $E\subseteq\R^{n+1}$ of finite perimeter in an open set $U\subseteq \R^{n+1}$ and a variational mean curvature $H\in\L^p(U)$ of $E$ in $U$. Since we may replace $H$ by $H^+\mathds{1}_E-H^-\mathds{1}_{E^\c}$ according to Remark \ref{REM: VMCs} \ref{REM: Infinitely many VMCs}, we can assume $H\ge0$ on $E$ and $H\le0$ on $E^\c$. We now derive the optimal regularity near an arbitrary fixed $x_0\in\partial^\ast\!E\cap U$.
  
  \smallskip
  
  \emph{Step 1\textup{:} Non-parametric rewriting with smooth variations.} By isometry invariance of perimeter and variational mean curvatures and by Theorem \ref{THEO: Massari's regularity theorem}, we may directly assume that $\partial E\cap \Omega=\partial^*\!E\cap\Omega$ is $\C^1$ and $E\cap(\overline{ \Omega} \times {[0,R]})$ is the subgraph of $u\in \mathrm{C}^1(\overline{ \Omega})$ for a bounded open set $ \Omega\subseteq  \R^{n}$ and a constant $R>0$ such that $u(\overline{ \Omega})\subseteq  (0,R)$ and $x_0\in\Omega\times{(0,R)}\Subset U$. In this situation, Remark \ref{REM: VMCs} \ref{REM: VMC implies minimizer property} asserts that $u$ minimizes 
  \begin{equation*}
    \G[w]
    \coleq \int_\Omega \bigg[ \sqrt{1+|\D w(x)|^2}-\int_0^{w(x)}H(x,t)\d t \bigg] \dx
  \end{equation*}
  among all functions $w=u+\varphi$ with $\varphi\in\C^1_\cpt( \Omega)$ such that $w(\overline{ \Omega})\subseteq  (0,R)$.
    
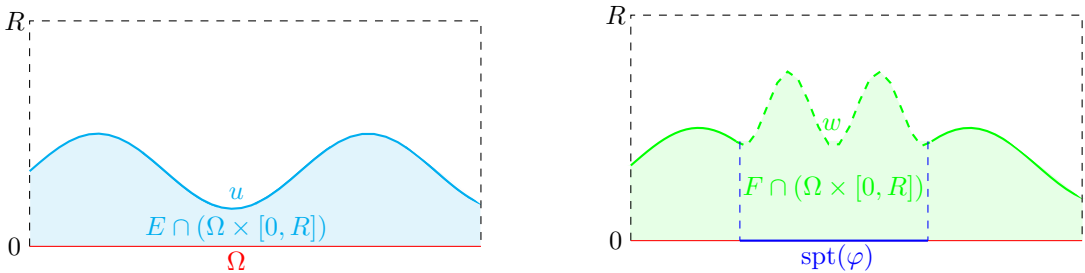
\begin{figure}[h]
        \centering
        \begin{tikzpicture}

\filldraw[white, fill=cyan!10] (0,0)--(0,1)--(6,1)--(6,0);

\filldraw[thick,cyan, fill=cyan!10, variable=\x,domain=0:2.5,samples=20] 
  plot ({\x},{0.5*sin(100*\x)+1});
  
\filldraw[thick,cyan, fill=white, variable=\x,domain=1.5:4,samples=20] 
  plot ({\x},{0.5*sin(100*\x)+1});

\filldraw[thick,cyan, fill=cyan!10, variable=\x,domain=3:6,samples=20] 
  plot ({\x},{0.5*sin(100*\x)+1});

\filldraw[thick,cyan, fill=white, variable=\x,domain=5:6,samples=20] 
  plot ({\x},{0.5*sin(100*\x)+1});

\filldraw[white] (5,1.35)--(6,0.567)--(6,1.37);

\draw[dashed]   (0,0)--(0,3)--(6,3)--(6,0);
\draw[red]      (0,0)--(6,0);

\node[cyan] at (2.75,0.24) {$E\cap (\Omega\times[0,R])$};
\node[cyan] at (2.75,0.7) {$u$};

\node at (-0.2,0) {$0$};
\node at (-0.2,3) {$R$};
\node[red] at (2.75,-0.2) {$\Omega$};

\end{tikzpicture}
\qquad\qquad 
\begin{tikzpicture}

\filldraw[green!10] (1.4,1.321)--(4,1.321)--(6,0.567)--(0,1)--(1.4,1.321); 

\filldraw[green!10] (0,0)--(0,1)--(6,1)--(6,0);

\filldraw[thick,green, fill=green!10, variable=\x,domain=0:1.4,samples=20] 
  plot ({\x},{0.5*sin(100*\x)+1});

\filldraw[thick,green, fill=green!10, variable=\x,domain=4:6,samples=20] 
  plot ({\x},{0.5*sin(100*\x)+1});

\filldraw[white] (5,1.35)--(6,0.567)--(6,1.37);

\filldraw[thick, green, fill=green!10, dashed, variable= \x, domain=1.4:4, samples=20] 
plot ({\x},{-0.5*sin(300*\x)+1.75});

\draw[dashed]   (0,0)--(0,3)--(6,3)--(6,0);
\draw[red]      (0,0)--(6,0);

\node[green] at (2.7,0.7) {$F\cap (\Omega\times[0,R])$};
\node[green] at (2.675,1.55) {$w$};

\node at (-0.2,0) {$0$};
\node at (-0.2,3) {$R$};
\draw[blue,dashed] (1.45,1.321)--(1.45,0);
\draw[blue,dashed] (3.95,1.321)--(3.95,0);
\draw[blue, thick] (1.45,0)--(3.95,0);
\node[blue] at (2.75,-0.22) {$\mathrm{spt}(\varphi)$};

\end{tikzpicture}
        \caption{Local parameterization of $E$ via $u$ and of test sets $F$ via $w$}
    \end{figure}
    
  \emph{Step 2\textup{:} Extension to $\W^{1,2}$ variations with image in ${[\varepsilon,R-\varepsilon]}$}. 
  We first argue briefly that we can approximate $\psi\in \W^{1,2}_0( \Omega)$ with $\varepsilon\le u+\psi\le R-\varepsilon$ a.\@e.\@ on $\Omega$ for some fixed $\varepsilon>0$ by $\varphi_k\in\C^1_\cpt(\Omega)$ such that $(u+\varphi_k)(\overline{ \Omega})\subseteq{(0,R)}$ with respect to the $\W^{1,2}$ norm and a.\@e.\@ on $ \Omega$. Indeed, possibly decreasing $\varepsilon$ we may assume $u(\overline{ \Omega})\subseteq{(\varepsilon,R-\varepsilon)}$, and we approximate $\psi$ by $\psi_k\in\C^1_\cpt(\Omega)$ in $\W^{1,2}( \Omega)$ and a.\@e.\@ on $\Omega$. In order to truncate suitably, we choose $T_\varepsilon\in\C^1\big(\R,{\big[\frac13\varepsilon,R-\frac13\varepsilon\big]}\big)$ with bounded derivative $(T_\varepsilon)'$ such that $T_\varepsilon(y)=y$ for $y\in{\big[\frac23\varepsilon,R-\frac23\varepsilon\big]}$. It is then a standard matter to check that, for $k\to\infty$, we have convergence of $T_\varepsilon(u+\psi_k)$ to $u+\psi$ in $\W^{1,2}(\Omega)$ and a.\@e.\@ on $\Omega$ and  consequently also of $\varphi_k\coleq T_\varepsilon(u+\psi_k)-u\in\C^1_\cpt(\Omega)$ to $\psi$ in the same sense. Taking into account that $(u+\varphi_k)(\overline{ \Omega})\subseteq{\big[\frac13\varepsilon,R-\frac13\varepsilon\big]}$ by choice of $T_\varepsilon$, we have established the approximation claim.

  \smallskip

  Now, the convergence in $\W^{1,2}(\Omega)$ and a.\@e.\@ on $\Omega$ implies convergence of the first-order and zero-order terms of the functional $\G$, respectively (where, for the latter one, we make use of the dominated convergence theorem). All in all, we infer convergence of $\G[u+\varphi_k]$ to $\G[u+\psi]$. Hence, we can conclude that $u$ minimizes $\G$ among all functions in $u+\psi\in\W^{1,2}_u(\Omega)$ with $\varepsilon\le u+\psi\le R-\varepsilon$ a.\@e.\@ on $\Omega$ for some fixed $\varepsilon>0$.

  \smallskip
    
  \emph{Step 3\textup{:} Extension to arbitrary $\W^{1,2}$ variations}.
  Finally, we pass on to fully arbitrary competitors without need for the $(\varepsilon,R-\varepsilon)$ requirement of Step 2. We consider an arbitrary $\Phi \in \W_u^{1,2}( \Omega)$ and, for $\varepsilon>0$ small enough that $u(\overline{ \Omega})\subseteq{(\varepsilon,R-\varepsilon)}$, record that the sharp truncation $\Psi\coleq\max\{\min\{\Phi,R-\varepsilon\},\varepsilon\}$ is in $ \W^{1,2}_u( \Omega)$ with $\D \Psi= \mathds{1}_{(\varepsilon,R-\varepsilon)}(\Phi)\, \D \Phi$ a.e. on $ \Omega$. We further define $\Tilde{H}(x,t)\coleq \mathds{1}_{[0,R]}(t) H(x,t)$. Then, since $\tilde{H}\in \L^1( \Omega\times \R)\cap\L^p( \Omega\times \R)$ satisfies $H\ge0$ on $ \Omega\times[0,\varepsilon]\subseteq  E$ and $H\le0$ on $ \Omega\times{[R-\varepsilon,R]}\subseteq  E^\c$, it follows
  \[\begin{aligned}
    \Tilde{\G}[\Phi]&\coleq  \int_ \Omega \bigg[ \sqrt{1+|\D \Phi(x)|^2}-\int_0^{\Phi(x)}\Tilde{H}(x,t)\d t \bigg] \dx \\
    &\ge \int_ \Omega \bigg[ \sqrt{1+|\D \Psi(x)|^2}-\int_0^{\Psi(x)}\Tilde{H}(x,t)\d t \bigg] \dx
    = \G[\Psi] \ge \G[u] = \Tilde{\G}[u]\,,
  \end{aligned}\]
  where the last rewritings are based on the minimality established in Step 2 and on $u(\overline{ \Omega})\subseteq{(0,R)}$, respectively.

  \smallskip
    
  \emph{Step 4\textup{:} Application of the non-parametric regularity result}.
  The first-order integrand $f(z)\coleq \sqrt{1+|z|^2}$ of the functional $\widetilde \G$ is smooth and of at most quadratic growth (in fact, linear growth) with
  \[
    \D^2 f(z)\xi\cdot \xi\ge \frac{|\xi|^2}{(1+M^2)^\frac{3}{2}}
    \qquad\text{for all }z,\xi\in\R^{n}\text{ with }|z|\le M\,.
  \]
  Thus, $f$ is as in Setting \ref{Setting: local convex f} with $N=1$, while $\tilde H$ satisfies the conditions of Corollary \ref{COR: Integral LrLp type terms} with $r=p\in{(n+1,\infty)}$, and moreover we already know $u\in\C^1(\overline{\Omega})$ and thus read off $\regreg=\Omega$ from \eqref{EQ: regular set}. All in all, as indicated at the beginning of this section, we may then apply Corollary \ref{COR: Integral LrLp type terms} to deduce that the minimizer $u$ of $\tilde\G$ is in $\C^{1,\alpha}_\loc(\Omega)$ with exactly the claimed exponent $\alpha=\alpha_\mathrm{opt}=\frac{p-(n+1)}{p+1}$. Since we worked near an arbitrary $x_0\in\partial^{\ast}\!E\cap U$, we have verified that $\partial^{\ast}\!E\cap U$ is an $n$-dimensional $\C^{1,\alpha_\mathrm{opt}}$-submanifold (and relatively open in $\partial E\cap U$). Finally, the assertions on the size of the singular set $(\partial E\setminus\partial^{\ast}\!E)\cap U$ are already contained in Theorem \ref{THEO: Massari's regularity theorem}.
\end{proof}

\phantomsection\addcontentsline{toc}{section}{References}


\begin{thebibliography}{55}
\bibitem{ACEFUS87}
{\sc E.~Acerbi and N.~Fusco}, {{A regularity theorem for minimizers of
  quasiconvex integrals}}, {\it Arch. Ration. Mech. Anal.} {\bf99} (1987),
  261--281.

\bibitem{ACEFUS89}
{\sc E.~Acerbi and N.~Fusco}, {{Local regularity
  for minimizers of non convex integrals}}, {\it Ann. Sc. Norm. Super. Pisa,
  Cl. Sci., IV. Ser.} {\bf16} (1989), 603--636.

\bibitem{ALTPHI86}
{\sc H.~Alt and D.~Phillips}, {{A free boundary problem for semilinear
  elliptic equations}}, {\it J. Reine Angew. Math.} {\bf368} (1986),
  63--107.

\bibitem{ambrosio2000}
{\sc L.~Ambrosio, N.~Fusco, and D.~Pallara}, {{\it Functions of Bounded
  Variation and Free Discontinuity Problems}}, Oxford University Press, 2000.

\bibitem{ANZELLOTTI83}
{\sc G.~Anzellotti}, {{On the $\mathrm{C}^{1,\alpha}$-regularity of
  $\omega$-minima of quadratic functionals}}, {\it Boll. Unione Mat. Ital., VI.
  Ser., C, Anal. Funz. Appl.} {\bf2} (1983), 195--212.

\bibitem{ANZGIA88}
{\sc G.~Anzellotti and M.~Giaquinta}, {{Convex functionals and partial
  regularity}}, {\it Arch. Ration. Mech. Anal.} {\bf102} (1988), 243--272.

\bibitem{Barozzi1994}
{\sc E.~Barozzi}, {{The curvature of a set with finite area}}, {\it Atti
  Accad. Naz. Lincei, Cl. Sci. Fis. Mat. Nat., IX. Ser., Rend. Lincei, Mat.
  Appl.} {\bf5} (1994), 149--159.

\bibitem{BECK13}
{\sc L.~Beck}, {{Regularity versus singularity for elliptic problems in two
  dimensions}}, {\it Adv. Calc. Var.} {\bf6} (2013), 415--432.

\bibitem{BILFUC01a}
{\sc M.~Bildhauer and M.~Fuchs}, {{Partial regularity for variational
  integrals with $(s,\mu,q)$-growth}}, {\it Calc. Var. Partial Differ. Equ.}
  {\bf13} (2001), 537--560.

\bibitem{bombieri1969minimal}
{\sc E.~Bombieri, E.~De~Giorgi, and E.~Giusti}, {{Minimal cones and the
  Bernstein problem}}, {\it Invent. Math.} {\bf7} (1969), 243--268.

\bibitem{CAMPANATO83}
{\sc S.~Campanato}, {{H{\"o}lder continuity of the solutions of some
  nonlinear elliptic systems}}, {\it Adv. Math.} {\bf48} (1983), 16--43.

\bibitem{CAMPANATO87}
{\sc S.~Campanato}, {{Elliptic systems
  with non-linearity $q$ greater or equal to two. Regularity of the solution of
  the Dirichlet problem}}, {\it Ann. Mat. Pura Appl., IV. Ser.} {\bf147}
  (1987), 117--150.

\bibitem{CARFUSMIN98}
{\sc M.~Carozza, N.~Fusco, and G.~Mingione}, {{Partial regularity of
  minimizers of quasiconvex integrals with subquadratic growth}}, {\it Ann.
  Mat. Pura Appl., IV. Ser.} {\bf175} (1998), 141--164.

\bibitem{CARLEOPASVER09}
{\sc M.~Carozza, C.~Leone, A.~{Passarelli di Napoli}, and A.~Verde}, {\em
  {Partial regularity for polyconvex functionals depending on the {Hessian}
  determinant}}, {\it Calc. Var. Partial Differ. Equ.} {\bf35} (2009),
  215--238.

\bibitem{CARMIN01}
{\sc M.~Carozza and G.~Mingione}, {{Partial regularity of minimizers of
  quasiconvex integrals with subquadratic growth: the general case}}, {\it Ann.
  Pol. Math.} {\bf77} (2001), 219--243.

\bibitem{CHEKRI17}
{\sc C.Y.~Chen and J.~Kristensen}, {{On coercive variational integrals}}, {\it
  Nonlinear Anal., Theory Methods Appl., Ser. A, Theory Methods} {\bf153}
  (2017), 213--229.

\bibitem{CHOYAN07}
{\sc S.~Cho and X.~Yan}, {{On the singular set for Lipschitzian critical
  points of polyconvex functionals}}, {\it J. Math. Anal. Appl.} {\bf336}
  (2007), 372--398.

\bibitem{CRARABTAR77}
{\sc M.G.~Crandall, P.H.~Rabinowitz, and L.~Tartar}, {{On a Dirichlet
  problem with a singular nonlinearity}}, {\it Commun. Partial Differ.
  Equations} {\bf2} (1977), 193--222.

\bibitem{CRUDIE19}
{\sc D.~{Cruz-Uribe}, L.~Diening}, {{Sharp $\mathcal{A}$-harmonic
  approximation}}, {\it Appl. Anal.} {\bf98} (2019), 374--380.

\bibitem{DEGIORGI68}
{\sc E.~{De Giorgi}}, {{Un esempio di estremali discontinue per un problema
  variazionale di tipo ellittico}}, {\it Boll. Unione Mat. Ital., IV. Ser.}
  {\bf1} (1968), 135--137.

\bibitem{DIESTRVER12}
{\sc L.~Diening, B.~Stroffolini, A.~Verde}, {{The $\varphi$-harmonic
  approximation and the regularity of $\varphi$-harmonic maps}}, {\it J.
  Differ. Equations} {\bf253} (2012), 1943--1958.

\bibitem{DIELENSTRVER12}
{\sc L.~Diening, D.~Lengeler, B.~Stroffolini, A.~Verde}, {{Partial
  regularity for minimizers of quasi-convex functionals with general
  growth}}, {\it SIAM J. Math. Anal.} {\bf44} (2012), 3594--3616.

\bibitem{DUZGAS02}
{\sc F.~Duzaar and A.~Gastel}, {{Nonlinear elliptic systems with Dini
  continuous coefficients}}, {\it Arch. Math.} {\bf78} (2002), 58--73.

\bibitem{DUZGASGRO00}
{\sc F.~Duzaar, A.~Gastel, and J.~Grotowski}, {{Partial regularity for
  almost minimizers of quasi-convex integrals}}, {\it SIAM J. Math. Anal.}
  {\bf32} (2000), 665--687.

\bibitem{DUZGASMIN04}
{\sc F.~Duzaar, A.~Gastel, and G.~Mingione}, {{Elliptic systems, singular
  sets and Dini continuity}}, {\it Commun. Partial Differ. Equations} {\bf29}
  (2004), 1215--1240.

\bibitem{DUZGRO00}
{\sc F.~Duzaar and J.~Grotowski}, {{Optimal interior partial regularity for
  nonlinear elliptic systems: the method of A-harmonic approximation}}, {\it
  Manuscr. Math.} {\bf103} (2000), 267--298.

\bibitem{DUZGROKRO04}
{\sc F.~Duzaar, J.~Grotowski, and M.~Kronz}, {{Partial and full boundary
  regularity for minimizers of functionals with nonquadratic growth}}, {\it J.
  Convex Anal.} {\bf11} (2004), 437--476.

\bibitem{DUZGROKRO05}
{\sc F.~Duzaar, J.~Grotowski, and M.~Kronz}, {{Regularity of
  almost minimizers of quasi-convex variational integrals with subquadratic
  growth}}, {\it Ann. Mat. Pura Appl., IV. Ser.} {\bf11} (2005), 421--448.

\bibitem{DUZMIN08}
{\sc F.~Duzaar and G.~Mingione}, {{Harmonic type approximation lemmas}},
  {\it J. Math. Anal. Appl.} {\bf352} (2009), 301--335.

\bibitem{DUZMIN10A}
{\sc F.~Duzaar and G.~Mingione}, {{Gradient continuity
  estimates}}, {\it Calc. Var. Partial Differ. Equ.} {\bf39} (2010),
  379--418.

\bibitem{DUZMIN10B}
{\sc F.~Duzaar and G.~Mingione}, {{Gradient estimates
  via linear and nonlinear potentials}}, {\it J. Funct. Anal.} {\bf259}
  (2010), 2961--2998.

\bibitem{DUZMIN11}
{\sc F.~Duzaar and G.~Mingione}, {{Gradient estimates
  via non-linear potentials}}, {\it Am. J. Math.} {\bf133} (2011),
  1093--1149.

\bibitem{DUZSTE02}
{\sc F.~Duzaar and K.~Steffen}, {{Optimal interior and boundary regularity
  for almost minimizers to elliptic variational integrals}}, {\it J. Reine
  Angew. Math.} {\bf546} (2002), 73--138.

\bibitem{EVANS86}
{\sc L.~Evans}, {{Quasiconvexity and partial regularity in the calculus of
  variations}}, {\it Arch. Ration. Mech. Anal.} {\bf95} (1986), 227--252.

\bibitem{EVAGAR87}
{\sc L.~Evans and R.~Gariepy}, {{Blowup, compactness and partial regularity
  in the calculus of variations}}, {\it Indiana Univ. Math. J.} {\bf36}
  (1987), 361--371.

\bibitem{FUSHUT85}
{\sc N.~Fusco and J.~Hutchinson}, {{$\mathrm{C}^{1,\alpha}$ partial
  regularity of functions minimising quasiconvex integrals}}, {\it Manuscr.
  Math.} {\bf54} (1985), 121--143.

\bibitem{FUSHUT91}
{\sc N.~Fusco and J.~Hutchinson}, {{Partial regularity
  in problems motivated by nonlinear elasticity}}, {\it SIAM J. Math. Anal.}
  {\bf22} (1991), 1516--1551.

\bibitem{GARIEPY89}
{\sc R.~Gariepy}, {{A Caccioppoli inequality and partial regularity in the
  calculus of variations}}, {\it Proc. R. Soc. Edinb., Sect. A, Math.}
  {\bf112} (1989), 249--255.

\bibitem{giaquinta1983differentiability}
{\sc M.~Giaquinta and E.~Giusti}, {{Differentiability of minima of
  non-differentiable functionals}}, {\it Invent. Math.} {\bf72} (1983),
  285--298.

\bibitem{GIAGIU84}
{\sc M.~Giaquinta and E.~Giusti}, {{Sharp estimates for
  the derivatives of local minima of variational integrals}}, {\it Boll. Unione
  Mat. Ital., VI. Ser., A} {\bf3} (1984), 239--248.

\bibitem{GIAMOD79}
{\sc M.~Giaquinta and G.~Modica}, {{Almost-everywhere regularity results
  for solutions of non linear elliptic systems}}, {\it Manuscr. Math.} {\bf28}
  (1979), 109--158.

\bibitem{GIAMOD86}
{\sc M.~Giaquinta and G.~Modica}, {{Partial regularity
  of minimizers of quasiconvex integrals}}, {\it Ann. Inst. H. Poincar{\'e}
  Anal. Non Lin{\'e}aire} {\bf3} (1986), 185--208.

\bibitem{giusti1984}
{\sc E.~Giusti}, {{\it Minimal Surfaces and Functions of Bounded Variation}},
  Birkhäuser, 1984.

\bibitem{giusti2003direct}
{\sc E.~Giusti}, {{\it Direct Methods in
  the Calculus of Variations}}, World Scientific, 2003.

\bibitem{GIUMIR68}
{\sc E.~Giusti and M.~Miranda}, {{Sulla regolarit{\`a} delle soluzioni
  deboli di una classe di sistemi ellitici quasi-lineari}}, {\it Arch. Ration.
  Mech. Anal.} {\bf31} (1968), 173--184.

\bibitem{GIUMIR68EX}
{\sc E.~Giusti and M.~Miranda}, {{Un esempio di
  soluzioni discontinue per un problema di minimo relativo ad un integrale
  regolare del calcolo delle variazioni}}, {\it Boll. Unione Mat. Ital., IV.
  Ser.} {\bf1} (1968), 219--226.

\bibitem{GMEINEDER21}
{\sc F.~Gmeineder}, {{Partial regularity for symmetric quasiconvex functionals
  on $\mathrm{BD}$}}, {\it J. Math. Pures Appl. (9)} {\bf145} (2021), 83--129.

\bibitem{GMEKRI19}
{\sc F.~Gmeineder and J.~Kristensen}, {{Partial regularity for $\mathrm{BV}$
  minimizers}}, {\it Arch. Ration. Mech. Anal.} {\bf232} (2019), 1429--1473.

\bibitem{GMEKRI23}
{\sc F.~Gmeineder and J.~Kristensen}, {{Quasiconvex functionals of
  $(p,q)$-growth and the partial regularity of relaxed minimizers}}, {\it Arch.
  Ration. Mech. Anal.} {\bf248} (2024), 125 pages.

\bibitem{massari1994variational}
{\sc E.~Gonzalez and U.~Massari}, {{Variational mean curvatures}}, {\it
  Rend. Sem. Mat. Univ. Pol. Torino} {\bf52} (1994), 1--28.

\bibitem{GMT1993boundaries}
{\sc E.~Gonzalez, U.~Massari, and I.~Tamanini}, {{Boundaries of prescribed
  mean curvature}}, {\it Atti Accad. Naz. Lincei, Cl. Sci. Fis. Mat. Nat., IX.
  Ser., Rend. Lincei, Mat. Appl.} {\bf4} (1993), 197--206.

\bibitem{GUIDORZI02}
{\sc M.~Guidorzi}, {{Partial regularity in non-linear elasticity}}, {\it
  Manuscr. Math.} {\bf107} (2002), 25--41.

\bibitem{HAMBURGER96}
{\sc C.~Hamburger}, {{Partial regularity for minimizers of variational
  integrals with discontinuous integrands}}, {\it Ann. Inst. Henri
  Poincar{\'e}, Anal. Non Lin{\'e}aire} {\bf13} (1996), 255--282.

\bibitem{HAMBURGER98}
{\sc C.~Hamburger}, {{A new partial
  regularity proof for solutions of nonlinear elliptic systems}}, {\it Manuscr.
  Math.} {\bf95} (1998), 11--31.

\bibitem{HAMBURGER03}
{\sc C.~Hamburger}, {{Partial regularity
  of minimizers of polyconvex variational integrals}}, {\it Calc. Var. Partial
  Differ. Equ.} {\bf18} (2003), 221--241.

\bibitem{HAMBURGER07b}
{\sc C.~Hamburger}, {{Optimal partial
  regularity of minimizers of quasiconvex variational integrals}}, {\it ESAIM,
  Control Optim. Calc. Var.} {\bf13} (2007), 639--656.

\bibitem{HAMLUTMEAWHI07}
{\sc F.~Hammock, P.~Luthy, A.~Meadows, and P.~Whitman}, {{Tornado solutions
  for semilinear elliptic equations in $\mathbb{R}^2$: applications}}, {\it
  Proc. Am. Math. Soc.} {\bf135} (2007), 1419--1430.

\bibitem{HOPPER16}
{\sc C.~Hopper}, {{Partial regularity for holonomic minimisers of
  quasiconvex functionals}}, {\it Arch. Ration. Mech. Anal.} {\bf222} (2016),
  91--141.

\bibitem{KRIMIN05a}
{\sc J.~Kristensen and G.~Mingione}, {{The singular set of
  $\omega$-minima}}, {\it Arch. Ration. Mech. Anal.} {\bf177} (2005),
  93--114.

\bibitem{KRIMIN05}
{\sc J.~Kristensen and G.~Mingione}, {{The singular set of
  minima of integral functionals}}, {\it Arch. Ration. Mech. Anal.} {\bf180}
  (2006), 331--398.

\bibitem{KRIMIN07}
{\sc J.~Kristensen and G.~Mingione}, {{The singular set of
  Lipschitzian minima of multiple integrals}}, {\it Arch. Ration. Mech.
  Anal.} {\bf184} (2007), 341--369.

\bibitem{KUSMIN16}
{\sc T.~Kuusi and G.~Mingione}, {{Partial regularity and potentials}}, {\it
  J. {\'E}c. Polytech., Math.} {\bf3} (2016), 309--363.

\bibitem{maggi2012}
{\sc F.~Maggi}, {{\it Sets of Finite Perimeter and Geometric Variational
  Problems. An Introduction to Geometric Measure Theory}}, Cambridge University
  Press, 2012.

\bibitem{massari1974esistenza}
{\sc U.~Massari}, {{Esistenza e regolarit{\`a} delle ipersuperfici di
  curvatura media assegnata in $\mathbb{R}^n$}}, {\it Arch. Ration. Mech.
  Anal.} {\bf55} (1974), 357--382.

\bibitem{massari1975frontiere}
{\sc U.~Massari}, {Frontiere orientate
  di curvatura media assegnata in $ \mathrm{L}^p$}, {\it Rend. Sem. Mat. Univ.
  Padova} {\bf53} (1975), 37--52.

\bibitem{MINGIONE06}
{\sc G.~Mingione}, {{Regularity of minima: an invitation to the Dark Side
  of the Calculus of Variations}}, {\it Appl. Math.} {\bf51} (2006),
  355--425.

\bibitem{MOONEY20}
{\sc C.~Mooney}, {{Minimizers of convex functionals with small degeneracy
  set}}, {\it Calc. Var. Partial Differ. Equ.} {\bf59} (2020), p.~19 pages.

\bibitem{MONSA16}
{\sc C.~Mooney and O.~Savin}, {{Some singular minimizers in low dimensions
  in the calculus of variations}}, {\it Arch. Ration. Mech. Anal.} {\bf221}
  (2016), 1--22.

\bibitem{morrey1952quasi}
{\sc C.~Morrey}, {{Quasi-convexity and the lower semicontinuity of multiple
  integrals}}, {\it Pac. J. Math.} {\bf2} (1952), 25--53.

\bibitem{MORREY68}
{\sc C.~Morrey}, {{Partial regularity
  results for non-linear elliptic systems}}, {\it J. Math. Mech.} {\bf17}
  (1968), 649--670.

\bibitem{MUESVE03}
{\sc S.~M{\"u}ller and V.~{\v{S}}ver{\'a}k}, {{Convex integration for
  Lipschitz mappings and counterexamples to regularity}}, {\it Ann. Math. (2)}
  {\bf157} (2003), 715--742.

\bibitem{NECAS77}
{\sc J.~Ne{\v c}as}, {{Example of an irregular solution to a nonlinear
  elliptic system with analytic coefficients and conditions for regularity}}.
\newblock {Theor. Nonlin. Oper., Constr. Aspects, Proc. int. Summer Sch.,
  Berlin 1975, 197--206}, 1977.

\bibitem{PASSIE97}
{\sc A.~{Passarelli di Napoli} and F.~Siepe}, {{A regularity result for a
  class of anisotropic systems}}, {\it Rend. Ist. Mat. Univ. Trieste} {\bf28}
  (1996), 13--31.

\bibitem{PHILLIPS83A}
{\sc D.~Phillips}, {{A minimization problem and the regularity of solutions
  in the presence of a free boundary}}, {\it Indiana Univ. Math. J.} {\bf32}
  (1983), 1--17.

\bibitem{PHILLIPS83B}
{\sc D.~Phillips}, {{Hausdorff measure
  estimates of a free boundary for a minimum problem}}, {\it Commun. Partial
  Differ. Equations} {\bf8} (1983), 1409--1454.

\bibitem{SCHMIDT08}
{\sc T.~Schmidt}, {{Regularity of minimizers of
  $\mathrm{W}^{1,p}$-quasiconvex variational integrals with $(p,q)$-growth}},
  {\it Calc. Var. Partial Differ. Equ.} {\bf32} (2008), 1--24.

\bibitem{SCHMIDT09B}
{\sc T.~Schmidt}, {{A simple partial
  regularity proof for minimizers of variational integrals}}, {\it NoDEA,
  Nonlinear Differential Equations Appl.} {\bf16} (2009), 109--129.

\bibitem{SCHMIDT09A}
{\sc T.~Schmidt}, {{Regularity of
  relaxed minimizers of quasiconvex variational integrals with
  $(p,q)$-growth}}, {\it Arch. Ration. Mech. Anal.} {\bf193} (2009),
  311--337.

\bibitem{SCHMIDT14}
{\sc T.~Schmidt}, {{Partial regularity
  for degenerate variational problems and image restoration models in
  $\mathrm{BV}$}}, {\it Indiana Univ. Math. J.} {\bf63} (2014), 213--279.

\bibitem{SchmidtSchuettOptHoeldExp}
{\sc T.~Schmidt and J.~Sch{\"u}tt}, {{The optimal H{\"o}lder exponent in
  Massari’s regularity theorem}}, {\it Calc. Var. Partial Differ. Equ.}
  {\bf62} (2023), p.~23 pages.

\bibitem{SVEYAN00}
{\sc V.~{\v S}ver{\'a}k and X.~Yan}, {{A singular minimizer of a smooth
  strongly convex functional in three dimensions}}, {\it Calc. Var. Partial
  Differ. Equ.} {\bf10} (2000), 213--221.

\bibitem{SVEYAN02}
{\sc V.~{\v S}ver{\'a}k and X.~Yan}, {{Non-Lipschitz
  minimizers of smooth uniformly convex functionals}}, {\it Proc. Natl. Acad.
  Sci. USA} {\bf99} (2002), 15269--15276.

\bibitem{SZEKELYHIDI04}
{\sc L.~Sz{\'e}kelyhidi}, {{Non-Lipschitz minimizers of smooth uniformly
  convex functionals}}, {\it Arch. Ration. Mech. Anal.} {\bf172} (2004),
  133--152.

\bibitem{tamaninni1984regularity}
{\sc I.~Tamanini}, {Regularity results for almost minimal oriented
  hypersurfaces in $\mathbb{R}^n$}, {\it Quaderni del Dipartimento di
  Matematica dell’Universit\`a di Lecce} (1984), 95 pages.

\end{thebibliography}
\end{document}